\theoremstyle{definition}
\newtheorem{definition}{Definition}
\newtheorem{observation}[definition]{Observation}
\newtheorem{question}[definition]{Question}
\theoremstyle{plain}
\newtheorem{theorem}[definition]{Theorem}
\newtheorem{proposition}[definition]{Proposition}
\newtheorem{lemma}[definition]{Lemma}
\newtheorem{corollary}[definition]{Corollary}
\newtheorem{remark}[definition]{Remark}
\newcommand{\mo}{\triangleleft}
\newcommand{\fr}{{}^\frown}
\newcommand{\name}{\dot}
\newcommand{\can}{\check}
\newcommand{\force}{\Vdash}
\newcommand{\la}{\langle}
\newcommand{\ra}{\rangle}
\newcommand{\elem}{\prec}
\newcommand{\uhr}{\restriction}
\newcommand{\dhr}{\downharpoonright}
\newcommand{\power}{\mathcal{P}}
\newcommand{\oln}{\overline{\nu}}
\newcommand{\olm}{\overline{\mu}}
\newcommand{\R}{\mathbb{R}}
\newcommand{\rad}{\mathbb{R}}
\newcommand{\U}{\vec{U}}
\newcommand{\F}{\mathcal{F}}
\newcommand{\MS}{\mathcal{MS}}
\newcommand{\MF}{\mathfrak{MF}}
\newcommand{\e}{\vec{e}}
\DeclareMathOperator{\cf}{cf}
\DeclareMathOperator{\Cof}{cf}
\DeclareMathOperator{\otp}{otp}
\DeclareMathOperator{\supp}{supp}
\DeclareMathOperator{\suc}{succ}
\DeclareMathOperator{\RP}{RP}
\DeclareMathOperator{\WRP}{WRP}
\DeclareMathOperator{\LRP}{LRP}
\DeclareMathOperator{\Reg}{Reg}
\title{Diamonds, Compactness, and Measure Sequences}
\author{Omer Ben-Neria}
\date{}
\begin{document}
\maketitle
\begin{abstract}
We establish the consistency of the failure of the diamond principle on a cardinal $\kappa$ which satisfies a strong simultaneous reflection property. 
The result is based on an analysis of Radin forcing, and further leads to a characterization of weak compactness of $\kappa$ in a Radin generic extension.
\end{abstract}

\section{Introduction}
In pursuit of an understanding of the relations between compactness and approximation principles, 
we address the following question: To what extent do compactness principles assert the existence of a diamond sequence?\\
The compactness principles considered in this paper are stationary reflection and weak compactness. The main result of this paper shows that a strong form of stationary reflection does not imply $\Diamond_\kappa$.
\begin{theorem}\label{Theorem - Main}
It is consistent relative to a certain hypermeasurability large cardinal assumption that there exists a cardinal $\kappa$ satisfying the following properties.
\begin{enumerate}
\item For every sequence $\vec{S} = \la S_i \mid i < \kappa\ra$ of stationary subsets in $\kappa$, there exists $\delta < \kappa$ such that all sets in $\vec{S}\uhr \delta =\la S_i  \mid i < \delta\ra$ reflect at $\delta$. 
\item $\Diamond_\kappa$ fails. 
\end{enumerate}
\end{theorem}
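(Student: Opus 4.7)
The plan is to start in a ground model $V$ containing a cardinal $\kappa$ equipped with a coherent measure sequence $\U$ of length $\len(\U) \geq \kappa^{++}$, the natural hypermeasurability assumption under which one can run a Radin forcing that preserves the regularity of $\kappa$. The argument proceeds in two stages: first a preparation iteration that falsifies $\Diamond_\kappa$ while preserving a lifted measure sequence $\U^{*}$, and then Radin forcing with $\U^{*}$ to add a generic club $C \subseteq \kappa$ of ground-model measurables along which simultaneous reflection holds.

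For the preparation I would iterate forcings of the form $\Add(\alpha,\alpha^{++})$ along a suitable set of measurables $\alpha \leq \kappa$, with a final Cohen step $\Add(\kappa,\kappa^{++})$ at the top, and then lift each elementary embedding witnessing a measure in $\U$ through the iteration using the standard surgery technique. After this preparation, $\kappa$ still carries a measure sequence $\U^{*}$ that is coherent and long enough for Radin forcing, but $2^\kappa = \kappa^{++}$ and the generic Cohen subset of $\kappa$ defeats every candidate diamond sequence, so $\Diamond_\kappa$ fails in the intermediate extension. This is the familiar skeleton behind the classical consistency of $\neg\Diamond_\kappa$ at a measurable cardinal.

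Next, I would force with Radin forcing $\po(\U^{*})$. In the extension a club $C \subseteq \kappa$ of former measurables is added, and because $\len(\U^{*}) \geq \kappa^{++}$, the cardinal $\kappa$ remains regular (indeed inaccessible). To verify the simultaneous reflection property, let $\vec S = \la S_i \mid i < \kappa \ra$ be a sequence of stationary subsets in the Radin extension, coded by a name $\dot{\vec S}$. Using the factorisation $\po(\U^{*}) \simeq \po_{\leq \delta} \times \po_{>\delta}$ for $\delta \in C$, together with the Prikry property for the upper component and the closure of the lower component, I would argue that for a typical $\delta \in C$ large enough so that $\dot{\vec S}\uhr \delta$ is determined, each $S_i$ with $i<\delta$ is stationary below $\delta$ and is interpreted by a ground-model-type name. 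Since such a $\delta$ is measurable in the intermediate model, the standard fact that a measurable reflects any shorter sequence of stationary sets yields the required reflection point.

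The main obstacle is ensuring that $\neg\Diamond_\kappa$ survives the Radin forcing at the same time as the strong reflection is established; reflection principles of this strength are notoriously close to guessing principles like $\Diamond_\kappa$, and one must show that although the Radin generic organises simultaneous reflection along $C$, it does not covertly code a diamond sequence on $\kappa$. The technical heart of the argument will therefore be a chain-condition and Prikry-property analysis of $\po(\U^{*})$ showing that every subset of $\kappa$ in the final extension has a name supported on a small portion of the forcing, so that the failure of $\Diamond_\kappa$ in the intermediate model lifts to the Radin extension without being undone by the new club.
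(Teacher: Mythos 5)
Your outline has two fatal problems, both at the heart of the matter. First, the claim that the preparation forcing $\Add(\kappa,\kappa^{++})$ ``defeats every candidate diamond sequence'' is false: adding Cohen subsets of a regular uncountable $\kappa$ in fact \emph{forces} $\Diamond_\kappa$, and in any case $\kappa$ is still measurable in your intermediate model (that is the whole point of lifting the embeddings), and every measurable cardinal carries a $\Diamond_\kappa$-sequence. So there is no failure of diamond in the intermediate model to ``lift'' through the Radin forcing; the results of Jensen and Zeman quoted in the introduction show that $\neg\Diamond_\kappa$ at a Mahlo cardinal requires changing cofinalities below $\kappa$, so the failure can only be produced \emph{by} the Radin step itself. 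The actual mechanism (Woodin's, Theorem \ref{Theorem - Woodin Diamond}) is a diagonalization inside $\R(\U)$: one reduces each $\name{s}_{\kappa(\oln)}$ to a $\R(\oln)$-name, pushes the resulting function through $j$, and uses the $\kappa^+$-c.c.\ of each $\R(\U\uhr\tau)$ together with the hypothesis $2^\kappa>\ell(\U)$ to find a single $X\subset\kappa$ that no condition guesses. Note also that your cardinal arithmetic does not even permit this: you arrange $2^\kappa=\kappa^{++}$ while $\ell(\U^{*})\geq\kappa^{++}$, whereas one needs $2^\kappa>\ell(\U)$ simultaneously with $\cf(\ell(\U))\geq\kappa^{++}$, hence $2^\kappa\geq\kappa^{+3}$.

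Second, the reflection argument cannot be reduced to ``$\delta$ is measurable in the intermediate model, so it reflects the sequence.'' The stationary sets $S_i$ live in the final model, and for $\delta$ on the Radin club the factorization $\R(\U)/p\cong\R(\olm_\delta)/p^{\leq m}\times\R(\U)/p^{>m}$ places a full Radin forcing \emph{below} $\delta$; this lower component adds a club in $\delta$ that destroys the stationarity of most subsets of $\delta$, so the measurability of $\delta$ in the intermediate model tells you nothing about whether $S_i\cap\delta$ is stationary in $V[G]$. The genuine content of the paper's proof of Theorem \ref{Theorem - SR} is exactly this missing step: one attaches to each name $\name{S}$ a ``stationary witness'' $\e$ and a measure function $b^*$, proves that the associated set $Z_{\e}$ and its restrictions $Z_{\e}\dhr\vec{\eta}$ are $\U$-positive, and then uses $\cf(\ell(\U))\geq\kappa^{++}$ to find measure-one many $\oln$ with $\cf(\ell(\oln))=\kappa(\oln)^+$ for which the fat-tree analysis of Proposition \ref{Prop - Stationary 1}, applied to $\R(\oln)$, shows that $\name{S}\cap\kappa(\oln)$ meets every club added by the lower forcing. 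Without an argument of this kind your proposal does not establish even ordinary reflection of a single stationary set, let alone the simultaneous version for $\kappa$-sequences.
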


Let us recall the relevant definitions. Suppose that $\kappa$ is a regular cardinal. 
The diamond principle at $\Diamond_\kappa$, introduced by Jensen in \cite{JensenFS}, asserts the existence of a sequence $\la s_\alpha \mid \alpha < \kappa\ra$ of sets $s_\alpha \subset \alpha$, such that for every $X \subset \kappa$  the 
set $\{ \alpha < \kappa \mid s_\alpha = X \cap \alpha\}$ is stationary in $\kappa$. 
\\
We say that a stationary subset $S$ of $\kappa$ reflects at $\delta < \kappa$ if $S \cap \delta$ is stationary in $\delta$.  
A cardinal $\kappa$ is reflecting 
if every stationary subset of $\kappa$ reflects at some $\delta  <\kappa$. 
The stronger reflection property in the statement of Theorem \ref{Theorem - Main} will be called a strong simultaneous reflection property. It clearly implies that every family of less than $\kappa$ many stationary subsets of $\kappa$ simultaneously reflect at some $\delta < \kappa$. \\
Reflecting is a compactness type property \footnote{I.e., its contrapositive postulates that if $A$ is a subset of $\kappa$ and $A \cap \alpha$ is non-stationary for each $\alpha$, then $A$ is not stationary in $\kappa$.}. It well-known that a reflecting cardinal is greatly Mahlo, and that every weakly compact cardinal satisfies the strong simultaneous reflection property.
Although reflection is a consequence of the strong simultaneous reflection property,  the two properties may coincide: Jensen \cite{JensenFS} has shown that in $L$, a reflecting cardinal is weakly compact, and therefore satisfies the strong principle. In contrast, 
Harrington and Shelah \cite{HarringtonShelahSR} proved that the existence of a Mahlo cardinal is equi-consistent with reflection of every stationary subset of $\omega_2 \cap \Cof(\omega)$, while Magidor \cite{MagidorSR} proved  the stronger simultaneous reflection property for stationary subsets of $\omega_2 \cap \Cof(\omega)$ is equi-consistent with the existence of a weakly compact cardinal.\\
The relations between compactness (large cardinal) axioms and $\Diamond$ type principles have been extensively studied. 
See \cite{KanamoriDLU} for a comprehensive discussion of the problem. 
It is well-known that every measurable cardinal $\kappa$ carries a $\Diamond_\kappa$ sequence, and Jensen and Kunen \cite{JensenKunenLC} showed $\Diamond_\kappa$ holds at every subtle cardinal $\kappa$. In fact, they proved that a subtle cardinal $\kappa$ satisfies the stronger approximation property - $\Diamond_\kappa(\Reg)$. A $\Diamond_\kappa(\Reg)$ sequence is a diamond sequence which approximates subsets $X \subset \kappa$ on the (tighter set) of regular cardinals $\alpha < \kappa$ 
Nevertheless, not every large cardinal assumption implies the existence of a diamond sequence.
 Woodin first showed the stronger principle $\Diamond_\kappa(\Reg)$ can fail at a weakly compact cardinal. The result was extended by Hauser \cite{HauserDI} to indescribable cardinals, and by D{\u{z}}amonja and Hamkins \cite{DH} to strongly unfoldable cardinals.
Each of these results is established from the minimal relevant large cardinal assumption \footnote{E.g., the existence of a weakly compact cardinal $\kappa$ with $\neg\Diamond_\kappa(\Reg)$ is equi-consistent with the existence of a weakly compact cardinal.}, which are all compatible with $V = L$, and have been shown to be insufficient for establishing the violation of the full diamond principle:
Jensen \cite{JensenDM} has shown  $\neg\Diamond_\kappa$ at a Mahlo cardinal $\kappa$ implies the existence of $0^{\#}$, and Zeman \cite{ZemanDM} improved the lower bound of the assumption to the existence of an inner model $K$ with a Mahlo cardinal $\kappa$, such that for every $\epsilon < \kappa$, the set $\{ \alpha < \kappa \mid o^{K}(\alpha) \geq  \epsilon\}$ is stationary in $\kappa$. \\ 
Zeman's argument indicates that establishing the consistency of $\neg\Diamond_\kappa$ via forcing, requires changing the cofinality of many cardinals below $\kappa$.
Indeed,   starting from certain hypermeasurability (large cardinal) assumptions,
 Woodin \cite{CummingsWD} has shown $\neg\Diamond_\kappa$ is consistent with $\kappa$ being  inaccessible, Mahlo, or greatly Mahlo cardinal.
 Woodin's argument is based on Radin forcing $\R(\U)$ introduced in \cite{Radin}, which adds a closed unbounded subset to $\kappa$ consisting of indiscernibles associated with ultrafilters on $\kappa$ from a measure sequence $\U$. Theorem \ref{Theorem - Main}
 is based on Woodin's strategy, and relies on an analysis of Radin forcing. The analysis also leads to a characterization of weak compactness of $\kappa$ in a generic extension by $\R(\U)$. \\
Extending Woodin's result in a different direction, 
it has been recently shown in \cite{BGH} that the weak diamond principle,  $\Phi_\kappa$, also fails in Woodin's model. All the results of this paper concerning the failure of $\Diamond_\kappa$ are compatible with the argument for $\neg\Phi_\kappa$. 
In in another direction,  Golshani \cite{Golshani} has recently shown 
$\neg\Phi_\kappa$  is consistent with $\kappa$ being the first inaccessible cardinal.\\
%

A brief summary of this paper. The rest of this Section is devoted to reviewing Radin forcing $\R(\U)$ and its basic properties. 
Section \ref{Section 2} is devoted to studying the ground model sets $A \subset \kappa$ which remain stationary in a Radin generic extension. 
In Section \ref{Section 3} we extend the analysis to arbitrary stationary subsets of $\kappa$ in a generic extension, and prove Theorem \ref{Theorem - Main}.
Finally, in Section \ref{Section 4}, we go beyond reflection and consider the weak compactness of $\kappa$. We introduce a property of a measure sequence $\U$ called the weak repeat property (WRP), and prove $\kappa$ is weakly compact in a $\R(\U)$ extension if and only if $\U$ satisfies $\WRP$. 

\subsection{Additional information - Radin forcing}\label{Intro - Radin}
We review Radin forcing and its basic properties. Our presentation follows Gitik's Handbook chapter \cite{GitikHB}.  Thus, everything in Section \ref{Intro - Radin} (except Proposition \ref{Proposition - fresh sets}) can be found in \cite{GitikHB}.
Also, we shall follow the Jerusalem forcing convention of \cite{GitikHB}, that a condition $p$ is stronger (more informative) than a condition $q$ is denoted by $p \geq q$.

\begin{definition}
Let $\kappa$ be a measurable cardinal and $\U = \la \kappa\ra \fr \la U_\alpha \mid \alpha < \ell(\U)\ra$ be a sequence such that each $U_\alpha$ is a measure on $V_\kappa$ (i.e., a $\kappa$-complete normal ultrafilter on $V_\kappa$). 
For each $\beta  < \ell(\U)$, let $\U\uhr \beta$ denote the initial segment $\la \kappa \ra \fr \la U_\alpha \mid \alpha < \beta\ra$. 
 We say $\U$ is a \textbf{measure sequence} on $\kappa$ if there exists an elementary embedding $j : V \to M$ such that for each $\beta < \ell(\U)$, $\U\uhr \beta \in M$ and $U_\beta = \{ X \subset V_\kappa \mid \U\uhr \beta \in j(X)\}$. 
\end{definition}

We will frequently use the following notations. Let $\cap \U$ denote the filter $\bigcap_{\alpha  < \ell(\U)}U_\alpha$, and $\MS$ denote the set of measure sequences $\olm$ on measurable cardinals below $\kappa$. $\olm$ is of the form $\la \nu \ra  \fr \la u_i \mid i < \ell(\olm)\ra$, where each $u_i$ is a measure on $V_{\nu}$. We denote $\nu$ by $\kappa(\olm)$,
and $\cap \{u_i \mid i < \ell(\olm)\}$ by $\cap \olm$. 
For each $i < \ell(\olm)$, we denote $u_i$ by $\olm(i)$. \\

Let $\U$ be a measure sequence on $\kappa$. We proceed to define the Radin forcing $\R(\U)$. 
Define first a sequence of sets $A^n \subset \MS$, $n < \omega$. Let 
$A^0 = \MS$, and for every $n < \omega$, $A^{n+1} = \{\olm \in A^{n} \mid A^n \cap V_{\kappa(\olm)} \in \cap \olm\}$. 
Finally, set $\bar{A} = \bigcap_n A^n$. Using the embedding $j$ and the definition of the measures $U_\alpha$, $\alpha < \ell(\U)$, it is straightforward to verify $\bar{A} \in \bigcap \U$. 

\begin{definition}[Radin forcing]
$\R(\vec{U})$ consists of finite sequences $p = \la d_i \mid i \leq k\ra$ satisfying the following conditions.
\begin{enumerate}
\item 
For every $i \leq k$, $d_i$ is either of the form $\la \kappa_i \ra$ for some $\kappa_i < \kappa$,
or of the form $d_i = \la \vec{\mu_i} , a_i\ra$ where $\vec{\mu_i}$ is a measure sequence on a measurable cardinal $\kappa_i = \kappa(\vec{\mu_i}) \leq \kappa$, and $a_i \in \cap \vec{\mu_i}$ is a subset of $(\bar{A} \cap V_{\kappa_i}) \setminus V_{\kappa_{i-1}}$.
\item $\la \kappa_i \mid i \leq k\ra$ is a strictly increasing sequence and $\kappa_k = \kappa$.  
\item $d_k = \la \U, A\ra$ and $A \subset \bar{A}$. 
\end{enumerate}
For each $i \leq k$ we denote $\kappa_i$ by $\kappa(d_i)$, $\olm_i$ by $\olm(d_i)$, and $a_i$ by $a(d_i)$. 
For $m < k$, we denote $p^{\leq m} = \la d_i \mid i \leq m\ra$ and $p^{>m} = \la d_i \mid m < i \leq k\ra$. 
Given a condition $p = \la d_i \mid i \leq k\ra$, we frequently separate the top part $d_k = \la \vec{U} , A\ra$ from 
rest, and write $p = \vec{d}  \fr \la \vec{U}, A\ra$ or $p = p_0 \fr \la \vec{U},A\ra$, 
where $p_0 = \vec{d} = \la d_i \mid i < k\ra$. We denote the set of all lower parts of conditions by $\R_{<\kappa}$.
A condition $p^* =  \la d^*_i \mid i \leq k^*\ra$ is a \textbf{direct extension} of $p = \la d_i \mid i \leq k\ra$ if $k^* = k$ and $a(d^*_i) \subset a(d_i)$ whenever $a(d_i)$ exists.
A condition $p'$ is a \textbf{one point extension} of $p$ if there exists $j \leq k$ and a measure sequence $\vec{\nu} \in a(d_j)$ 
such that $p' = p \fr \la \oln \ra$ is either $\la d_i \mid i < j\ra \fr \la \vec{\nu} \ra \fr \la d_i \mid i \geq j\ra$ if $\vec{\nu} = \alpha$ is an ordinal, or $\la d_i \mid i < j\ra \fr \la \vec{\nu}, a(d_j) \cap V_{\kappa(\vec{\nu})}\ra \fr \la d_i \mid i \geq j\ra$ if $\vec{\nu}$ is a nontrivial measure sequence\footnote{Note that implicitly, we are assuming here that $\kappa(\vec{\nu}) > \kappa(d_{j-1})$ and $a(d_j) \cap V_{\kappa(\vec{\nu})} \in \cap \vec{\nu}$.}.
Let $p,\bar{p}$ be two conditions of $\R(\U)$ and $n < \omega$. We say that $\bar{p}$ is an $n$-extension of $p$, if there exists a sequence $\vec{\eta} = \la \oln_1, \dots , \oln_n\ra \subset \MS$ such that
$\bar{p} = ( \dots ((p \fr \la \oln_1\ra) \fr \la \oln_2 \ra ) \dots ) \fr \la \oln_n\ra$. We denote $\bar{p}$ by $p \fr \vec{\eta}$. 
Given two conditions $p,q \in \R(\U)$ we say that $q$ \textbf{extends} $p$, denoted $q \geq p$, if it is obtained from $p$ by a finite sequence of one point extensions and direct extensions. Equivalently, 
$q$ extends $p$ if there exists a finite sequence $\vec{\eta}$ so that  $q$ is a direct extension of $p \fr \vec{\eta}$. 
\end{definition}

\begin{definition}
Suppose that $ G \subset \R(\U)$ is a $V$ generic filter.
Define $\MS_G =  \{ \olm \in \MS \mid \exists p \in G, p = \la d_i \mid i \leq k\ra  \text{ and } \olm = \olm(d_i) \text{ for some } i < k\}$, and $C_G = \{ \kappa(\olm) \mid \olm \in \MS_G\}$. 
\end{definition}

A standard density argument shows that $\MS_G$  is almost contained in every ground model set $A \in \bigcap \U$ and it completely determines $G$. In particular, $V[G] = V[\MS_G]$. 
$C_G$ is called the generic Radin closed unbounded set. 
The definition of the forcing implies that if $p = \vec{d} \fr \la \U,A\ra$ and $q = \vec{e} \fr \la \U,B\ra$ are two conditions in $\R(\U)$ satisfying $|\vec{d}| = |\vec{e}|$ and $\olm(d_i) = \olm(e_i)$ for each $i < |\vec{d}|$, then $p,q$ are compatible. Since $|V_\kappa| = \kappa$, it follows $\R(\U)$ satisfies $\kappa^+$.c.c

\begin{lemma}${}$\\
\textbf{1.} 
$(\R(\U), \leq,\leq^*)$ satisfies the Prikry condition. Namely, for every condition $p \in \R(\U)$ and a statement of the forcing language $\sigma$, there exists $p^* \geq^* p$ which decides $\sigma$. \\
\textbf{2.} For each $p  = \vec{d} \fr \la \U,A\ra \in \R(\U)$ and $m < |\vec{d}|$, the forcing $\R(\U)/p$ is isomorphic to the product $\R(\olm(d_m))/p^{\leq m} \times \R(\U)/p^{>m}$.\\
\textbf{3.} For every condition $p  = \vec{d} \fr \la \U,A\ra \in \R(\U)$ and $m < |\vec{d}|$, the direct extension order $\leq^*$ of $\R(\U)/p^{>m}$ is $\kappa_m^+$-closed.
\end{lemma}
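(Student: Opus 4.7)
My plan is to handle parts (2) and (3) first as structural facts, then use them to attack the Prikry property in (1), which contains the substantive content. Part (2) is essentially the definition unpacked: the proposed isomorphism is $q \mapsto (q^{\leq m}, q^{>m})$. Any one-point extension of $p$ at position $d_j$ inserts a measure sequence $\oln$ with $\kappa(d_{j-1}) < \kappa(\oln) < \kappa(d_j)$, which lies below $\kappa_m$ iff $j \leq m$; direct shrinkings of $a(d_i)$ or $A$ affect only the corresponding factor. Hence extension actions on the two sides are independent, and the map is an order isomorphism onto the product.

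Part (3) is a closure count. A $\leq^*$-extension in $\R(\U)/p^{>m}$ shrinks only sets $a(d_i)$ for $i > m$ and the top set $A$, all of which live in filters $\cap \olm(d_i)$ or $\cap \U$. Each measure involved is at least $\kappa_{m+1}$-complete, hence $\kappa_m^+$-complete. For a $\leq^*$-descending sequence of length $< \kappa_m^+$, coordinate-wise intersections of the measure-one sets remain large, producing a common $\leq^*$-upper bound.

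The main work is (1). I would proceed by induction on a natural well-founded rank on $\MS$ (for instance, a Mitchell-style rank induced by the elementary embedding witnessing $\U$). Fix $p = \vec{d} \fr \la \U, A\ra$ and a sentence $\sigma$; the goal is to shrink $A$ to some $A^* \in \cap \U$ so that $\vec{d} \fr \la \U, A^*\ra$ decides $\sigma$. The crux is the one-point analysis: for each $\olm \in A$, the condition $p \fr \la \olm\ra$ factors by (2) into a lower piece (to which the inductive hypothesis applies) and an upper piece whose $\leq^*$ is $\kappa(\olm)^+$-closed by (3). Combining these yields a direct extension $p^\olm \geq^* p \fr \la \olm\ra$ that either decides $\sigma$ or certifies that no $\leq^*$-extension of $p \fr \la \olm\ra$ decides $\sigma$. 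Partition $A$ into sets $A_+, A_-, A_0$ of $\olm$'s according to which of these three alternatives holds for $p^\olm$. For each $U_\alpha$ with $\alpha < \ell(\U)$, exactly one of the three is in $U_\alpha$; a diagonalization using normality yields a single $A^* \in \cap \U$ contained in one of them. A density argument rules out the $A_0$ case: otherwise no extension of $\vec{d} \fr \la \U, A^*\ra$ could decide $\sigma$, contradicting density in any generic extending $p$.

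The main obstacle is synchronizing the inductively obtained extensions $p^\olm$ across $\olm \in A$ and collapsing them into one direct extension of $p$. This is precisely where Part (3) is essential: it supplies the closure needed to amalgamate the many direct extensions produced by the inductive hypothesis, both inside each $p^\olm$ and, together with normality, across the diagonal intersection over $\olm \in A$. Keeping this coherent through the induction — so that the diagonal object is uniform enough to be captured by a single set in $\cap \U$ — is the technical heart of the Radin/Mathias-style Prikry analysis.
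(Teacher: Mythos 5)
This lemma is stated in the paper without proof: it is part of the background on Radin forcing imported verbatim from Gitik's Handbook chapter (the paper explicitly says everything in Section 1.1 except Proposition 9 can be found there), so there is no in-paper argument to compare against. Judged on its own terms, your treatment of parts (2) and (3) is correct and is the standard one: the splitting $q \mapsto (q^{\leq m}, q^{> m})$ respects both one-point and direct extensions because any measure sequence inserted at position $j$ has critical point strictly between $\kappa_{j-1}$ and $\kappa_j$, and the closure count in (3) only uses that every filter $\cap\olm(d_i)$ with $i>m$, and $\cap\U$ itself, is $\kappa_{m+1}$-complete, hence closed under intersections of $\kappa_m$ many sets along a $\leq^*$-increasing sequence, which necessarily has a fixed skeleton.

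Your sketch of part (1) has a genuine gap at the step where you rule out the ``undecided'' alternative. After diagonalizing the trichotomy $A_+, A_-, A_0$ into a single $A^* \in \cap\U$, you claim that landing in $A_0$ is contradictory because ``no extension of $\vec{d}\fr\la\U,A^*\ra$ could decide $\sigma$.'' But your one-point analysis only certifies that for each $\olm \in A_0$ no \emph{direct} extension of $p\fr\la\olm\ra$ decides $\sigma$; an arbitrary extension of $\vec{d}\fr\la\U,A^*\ra$ adds finitely many points and then direct-extends, so nothing yet excludes a two-point (or $n$-point) extension deciding $\sigma$ even when every one-point extension fails to. The standard repair is an induction on the number of added points: one shows that if some $p\fr\vec{\eta}$ with $|\vec{\eta}|=n$ has a direct extension deciding $\sigma$, then (after shrinking measure-one sets, using normality and the closure from (3)) already some $(n-1)$-point extension does, reducing eventually to $n=0$. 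This is exactly the machinery packaged in the paper as Lemma 6 (the fat-tree structural lemma), and it is the genuinely technical part of the Prikry property for Radin forcing. A second, smaller gap: when you say the factorization plus closure ``yields a direct extension $p^{\olm}\geq^* p\fr\la\olm\ra$ that either decides $\sigma$ or certifies...'', you should make explicit the capturing argument — the lower factor $\R(\olm)/p^{\leq m}$ has size at most $\kappa(\olm)$, so one enumerates its conditions and uses the $\kappa(\olm)^+$-closure of $\leq^*$ on the upper factor to successively decide, for each lower condition, whether it forces the upper part to decide $\sigma$; without this the inductive hypothesis (which concerns statements of the lower forcing, not of $\R(\U)$) does not directly apply to $\sigma$.
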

Combining the last two Lemmata with a standard factorization argument, it is routine to verify $\R(\U)$ preserves all cardinals. 
Further analysis of $\R(\U)$ relies on the notion of fat trees. 
\begin{definition}\label{Definition - fat trees}
Let $\olm$ be a measure sequence on a cardinal $\nu = \kappa(\olm)$. 
A tree $T \subset [V_\nu]^{\leq n}$, for some $n < \omega$,  is called $\olm$-\textbf{fat} if
it consists of sequences of measure sequences $\vec{\oln} =\la \oln_1,\dots,\oln_k\ra$, $k \leq n$, satisfying the following two conditions. 
\begin{enumerate}
\item $\kappa(\oln_1) < \dots < \kappa(\oln_k)$.
\item if $k < n$ then there exists some $i < \ell(\olm)$ so that the set $\suc_T(\vec{\oln}) = \{ \oln' \mid \vec{\oln} \fr \la \oln'\ra \in T\}$ belongs to $\olm(i)$. 
\end{enumerate} 
\end{definition}

Let $p \in R(\U)$ and $\vec{\eta} = \la \oln_1,\dots,\oln_n\ra$ be a sequence of measure sequences such that $p \fr \vec{\eta} \geq p$. 
We say that a sequence of sets $\vec{A} = \la A_1,\dots, A_n\ra$ is a $\vec{\eta}$-measure-one sequence, if for every $1 \leq l \leq n$, such that $\ell(\oln_l) > 0$, then $A_l \in \cap \oln_l$. 
Let $ p \fr \la \vec{\eta},\vec{A}\ra$ be the direct extension of $p' = p \fr \vec{\eta}$ obtained by intersecting $A_l$ with the measure-one set $a^{p'}(\oln_l)$ appearing in $p'$, for every $l \leq n$ with $\ell(\oln_l) > 0$. 

\begin{lemma}\label{Lemma - structural}
Suppose $D$ is a dense open subset of $\R(\U)$ and $p \in \R(\U)$. Then there are $p^*_D = \la d_1^*,\dots, d_n^*\ra \geq^* p$, a finite sequence of integers, $1 \leq i_1 < \dots < i_m \leq n$, and a sequence of trees $\la T_1,\dots, T_m\ra$, where each $T_l \subset [V_{\kappa(\oln_{i_l})}]^{\leq n_l}$ is a $\olm(d^*_{i_l})$-fat tree, satisfying the following condition: For every sequence of maximal branches $\la \vec{\eta}_l \mid 1 \leq l \leq m\ra$ with each $\vec{\eta}_l$ maximal in $T_l$, there exists a sequence of sequences of sets $\la \vec{A_l} \mid 1 \leq l \leq m\ra$ such that for every $l$, $\vec{A_l}$ is a $\vec{\eta}_l$-measure-one sequence, and
$p^*_D \fr \la \vec{\eta}_1,\vec{A_1} \ra \fr \la \vec{\eta}_2,\vec{A_2}\ra \fr \dots \fr \la \vec{\eta}_m,\vec{A_m} \ra$ belongs to $D$. 
\end{lemma}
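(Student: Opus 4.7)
The approach is to combine the Prikry condition with the factorization of $\R(\U)/p$ as a product of $\R(\olm(d_m))/p^{\leq m}$ and $\R(\U)/p^{>m}$, and to construct the trees $T_l$ by an inductive normality argument. The underlying idea, familiar from the Prikry--Mathias style density analysis of Radin forcing in \cite{GitikHB}, is that a dense open set $D$ can be witnessed by fat trees of potential one-point extensions, where the fatness is inherited from the normal measures of the sequences $\olm(d^*_{i_l})$.

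Fix $p = \la d_1,\dots,d_k\ra$. Using the factorization, we may analyze the effect of one-point extensions coordinate by coordinate, as an extension at coordinate $i$ affects only the factor containing $d_i$. The plan is to treat the coordinates from top to bottom so that the remaining factors enjoy sufficient closure of $\leq^*$; by the closure property, all the direct extensions produced during the construction can be amalgamated into a single $p^*_D \geq^* p$.

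At a single coordinate $d_i$ with $\ell(\olm(d_i)) > 0$, I build a finite-height tree $T_i$ by induction on its depth. At a node $\vec{\eta} = \la \oln_1,\dots,\oln_t\ra$ already placed in $T_i$ with associated measure-one sequence $\vec{A}$, consider the condition $p \fr \la \vec{\eta},\vec{A}\ra$. By the Prikry condition, there is a direct extension of this condition that decides the statement ``some further one-point extension at coordinate $i$ already lies in $D$''. If this decision is positive, then by a second application of the Prikry condition to each candidate $\oln'$ I obtain a measure-one set of $\oln'$'s for which the resulting condition is in $D$, and $\vec{\eta}$ is declared a maximal branch. Otherwise, I select candidates $\oln'$ for which continuing is possible, and invoke normality of one of the measures $\olm(d_i)(j)$ to extract a measure-one set of uniformly-behaving $\oln'$'s, which is designated as $\suc_{T_i}(\vec{\eta})$. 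This ensures clause (2) of Definition \ref{Definition - fat trees}. Finiteness of the height follows from a standard argument: an infinite continuation would produce an infinite $\leq^*$-decreasing chain at coordinate $i$ whose limit contradicts density of $D$.

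The main obstacle I expect is the bookkeeping required for the global amalgamation across coordinates and the precise identification of which measure $\olm(d^*_{i_l})(j)$ witnesses the fatness condition at each internal node. The first issue is resolved by performing the induction top-down and exploiting the $\kappa_m^+$-closure of the upper factors' direct extension order; the second by observing that when the Prikry condition yields uniformity on a set in the filter $\cap \olm(d_i)$, one may further refine (again by the Prikry condition applied to the statement naming the responsible measure index $j$) to a uniform set lying in a single $\olm(d_i)(j)$, as required by the definition of a fat tree. The construction is delicate but follows the pattern established in \cite{GitikHB} and introduces no new combinatorial ingredients.
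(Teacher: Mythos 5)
The paper does not prove this lemma at all: it is quoted verbatim from Gitik's Handbook chapter (\cite{GitikHB}), as the author states at the start of Section 1.1, so there is no in-paper argument to compare against. Your outline follows the standard proof from that source and assembles the right ingredients: the Prikry property and the closure of $\leq^*$ in the upper factors to amalgamate direct extensions, the ultrafilter dichotomy applied to each individual measure $\olm(d_i)(j)$ (rather than to the filter $\cap\olm(d_i)$, which is not an ultrafilter) to certify clause (2) of Definition \ref{Definition - fat trees}, and density of $D$ to terminate the construction.

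The one place your sketch is materially thinner than the actual argument is the finiteness of the tree height. An infinite $\leq^*$-decreasing chain does not by itself contradict density: the closure of $\leq^*$ gives a lower bound $q_\omega$, density and openness of $D$ give some $r\in D$ which is a direct extension of $q_\omega\fr\vec{\eta}$ for some finite $\vec{\eta}$, and to derive a contradiction you must know that this particular $\vec{\eta}$ was already interrogated with a negative answer at some earlier stage. That forces you to diagonalize over \emph{all} finite sequences of prospective one-point extensions at every stage (via diagonal intersections over $V_{\kappa(d_i)}$ and the normality of the measures), not merely to proceed along a single branch; it also requires handling the fact that adding a point at coordinate $i$ changes which measure-one sets govern subsequent one-point extensions there. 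This is exactly the bookkeeping you defer to \cite{GitikHB}, and it does go through, but as written your termination sentence is an assertion rather than an argument.
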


\cite{GitikHB} utilizes  Lemma \ref{Lemma - structural} to prove the following result, originally due to Mitchell \cite{Mitchell}.
\begin{theorem}[Mitchell]\label{Theorem - Regularity}
If $\otp(\ell(\U)) \geq \kappa^+$ then $\kappa$ is regular in any $\R(\U)$ generic extension.
\end{theorem}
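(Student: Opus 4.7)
The plan is to argue by contradiction: suppose some $p\in\R(\U)$ forces a name $\dot f:\lambda\to\kappa$ to be cofinal in $\kappa$, with $\lambda<\kappa$. By one-point extensions drawing from $\bar A$, we may assume that $p$ contains a component $d_m$ with $\kappa_m:=\kappa(d_m)>\lambda$; write $p=p^{\leq m}\fr p^{>m}$. Using the factorization from the preceding lemma, decompose $\R(\U)/p$ as $\R(\olm(d_m))/p^{\leq m}\times \R(\U)/p^{>m}$, and pass to the intermediate generic extension by the lower factor. The upper factor $\R(\U)/p^{>m}$ then has direct extension order $\leq^\ast$ which is $\kappa_m^+$-closed, and in particular $\lambda^+$-closed.

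Exploiting this closure, I would build a single master condition capturing every value of $\dot f$ via the structural decomposition. For each $\xi<\lambda$ let $D_\xi=\{q : q\text{ decides }\dot f(\xi)\}$. Construct inductively a $\leq^\ast$-increasing chain $\la p^\ast_\xi : \xi<\lambda\ra$ above $p^{>m}$, where $p^\ast_\xi$ is obtained by first taking a $\leq^\ast$-lower bound of $\{p^\ast_\zeta : \zeta<\xi\}$ (available by $\lambda^+$-closure) and then applying Lemma \ref{Lemma - structural} to $D_\xi$. Let $p^\ast$ be a $\leq^\ast$-lower bound of the entire chain. Then $p^\ast$ simultaneously realizes the structural decomposition of Lemma \ref{Lemma - structural} for every $D_\xi$ via fat trees $T^\xi_1,\ldots,T^\xi_{m_\xi}$ attached to the fixed components of $p^\ast$.

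The remaining task is to show that the values of $\dot f(\xi)$ attainable through branch choices in the $T^\xi_l$ are uniformly bounded below $\kappa$. At each non-maximal node $\vec\eta$ of such a tree, the successor set lies in some measure $\olm(d^\ast_i)(\alpha)$. The function sending a successor $\oln$ to the ordinal which $\dot f(\xi)$ is decided to be is either regressive, in which case normality of $\olm(d^\ast_i)(\alpha)$ shrinks the successor set to a measure-one subset on which $\dot f(\xi)$ is constant, or else it is forced to depend essentially on $\kappa(\oln)$. In the latter, non-regressive case one invokes the hypothesis $\ell(\U)\geq\kappa^+$: among the $\geq\kappa^+$ available measures $U_\alpha$, a diagonalization (absorbing the $\lambda<\kappa^+$ many coordinates and the finitely many tree levels per coordinate) singles out an ultrafilter high enough in $\U$ so that the top-level $\U$-fat trees can be shrunk uniformly. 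Iterating both shrinkings produces $p^{\ast\ast}\geq^\ast p^\ast$ forcing some fixed $\kappa^\ast<\kappa$ to bound $\dot f(\xi)$ for every $\xi<\lambda$, contradicting cofinality of $\dot f$.

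The principal obstacle is this final step, specifically the non-regressive case on the top-level $\U$-fat trees where branch values could a priori be cofinal in $\kappa$; only a careful diagonal intersection across the abundant family $\{U_\alpha : \alpha<\ell(\U)\}$, together with $\kappa$-completeness and normality of each $U_\alpha$, can compress these values into a bounded region. Getting the order of quantifiers right between the $\lambda$-many coordinates of $\dot f$ and the inductive shrinking along the finitely many tree levels is the delicate combinatorial heart of the proof, and is precisely where the length hypothesis $\ell(\U)\geq\kappa^+$ is essential.
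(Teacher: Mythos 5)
The paper does not actually prove this theorem; it cites Mitchell via Gitik's Handbook chapter. The same technique is, however, executed in detail in the proof of Proposition \ref{Prop - Stationary 1} (the argument labelled Claim*), so your sketch can be measured against that. Your skeleton is right: the dense sets $D_\xi$, Lemma \ref{Lemma - structural}, the factorization to gain closure above $\lambda$, and the observation that the indices of the splitting measures of the at most $\kappa$ many relevant fat trees are bounded by some $\alpha^*<\ell(\U)$ because $\cf(\ell(\U))\geq\kappa^+$.

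The gap is exactly where you locate it, and the mechanism you propose does not close it. On the top-level $\U$-fat tree, the value of $\dot f(\xi)$ decided along a branch, viewed as a function of the nodes of that branch, may satisfy $\dot f(\xi)\geq\kappa(\oln)$ for the top node $\oln$ on a $U_\alpha$-large set; no shrinking to a $U_\alpha$-large subset, diagonal intersection, or appeal to normality can turn such a function into a bounded one, since $U_\alpha$-large sets are unbounded in $V_\kappa$. The correct move is to change the dense sets: take $D_\xi$ to be the set of $q$ forcing $\dot f(\xi)<\kappa_0(q)$, where $\kappa_0(q)$ is the largest critical point in the stem of $q$ (still dense open: decide the value, then make a one-point extension above it). Then use $\alpha^*$ to pick $\oln\in\bigcap_{\gamma\geq\alpha^*}U_\gamma$ such that every relevant $\U$-fat tree $T_{\xi,\vec d}$ with $\xi,\vec d\in V_{\kappa(\oln)}$ restricts to an $\oln$-fat tree on $V_{\kappa(\oln)}$. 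Given any $r$ extending $p^*\fr\la\oln\ra$, one can choose the maximal branches entirely inside $V_{\kappa(\oln)}$ and inside the measure-one set attached to $\oln$, producing a condition compatible with $r$, lying in $D_\xi$, and whose $\kappa_0$ equals $\kappa(\oln)$; hence it forces $\dot f(\xi)<\kappa(\oln)$. A density argument then gives that $p^*\fr\la\oln\ra$ forces the range of $\dot f$ to be bounded by $\kappa(\oln)<\kappa$. In short, the length hypothesis is used to reflect all the fat trees below a single $\kappa(\oln)$, not to compress the values of a non-regressive function; with $D_\xi$ defined merely as ``decides $\dot f(\xi)$'', your final step does not go through.
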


\begin{remark}\label{Remark - Cof kappa}
It is also shown in \cite{GitikHB} that the result of Theorem \ref{Theorem - Regularity} is optimal in the sense that $\kappa$ becomes singular in all $\R(\U)$ generic extensions when $\ell(\U) < \kappa^+$. 
A similar argument shows that if $\U$ does not contain a repeat point (see the definition below) and that $\cf(\ell(\U)) \leq \kappa$, then $\kappa$ becomes singular in the generic extension.
\end{remark}

\begin{definition}
A measure $U_\rho \in \U$ is a repeat point if $U_\alpha \subset \bigcup_{i < \rho}U_i$ for every $\alpha \geq \rho$. We say that $\U$ satisfies the \textbf{Repeat Property} ($\RP$) if it contains a repeat point. 
\end{definition}
\begin{theorem}[Mitchell]
If $\U$ satisfies $\RP$ then $\kappa$ remains measurable in a $\R(\U)$ generic extension.
\end{theorem}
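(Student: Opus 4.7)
The plan is to extend the embedding $j : V \to M$ witnessing $\U$ to an embedding $\hat{j} : V[G] \to M[G^*]$ defined inside $V[G]$, and then to set $U^* := \{ X \in P(\kappa)^{V[G]} \mid \kappa \in \hat{j}(X)\}$. Standard arguments then show that $U^*$ is a $\kappa$-complete ultrafilter on $\kappa$ in $V[G]$, witnessing the desired measurability. The construction of $G^*$ will take place inside $V[G]$, so that $U^* \in V[G]$.

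The first step is to analyze $j(\R(\U)) = \R(j(\U))^{M}$, the Radin forcing at $j(\kappa)$. Since each $\U \uhr \beta$ lies in $M$ for $\beta < \ell(\U)$, the sequence $\U$ itself lies in $M$ and is a measure sequence on $\kappa$ from $M$'s viewpoint; moreover, using that $\bar{A} \in \cap \U$ and that $V_\kappa^M = V_\kappa$, the iterative definition of $\bar{A}^{j(\U)}$ gives $\U \in \bar{A}^{j(\U)}$. Hence conditions of the form $\vec{d} \fr \la \U, A\ra \fr \la j(\U), B\ra$ are legitimate in $\R(j(\U))$, and by part~2 of the Prikry-type lemma applied in $M$, the forcing $\R(j(\U))$ factors at the $\U$-entry as $\R(\U)/p_0 \times \R(j(\U))^{>\kappa}$. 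The given $G$ supplies the left factor verbatim; the right factor must be built inside $V[G]$.

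I would then construct an $M$-generic filter for the tail forcing $\R(j(\U))^{>\kappa}$ by a transfinite decreasing chain of direct extensions. By part~3 of the Prikry-type lemma the tail direct-extension order is $\kappa^+$-closed in $M$, and a chain-condition argument reduces the relevant class of dense open subsets of this tail to one of size $\kappa^+$ in $V[G]$, so a standard recursion of length $\kappa^+$ produces the required generic. Splicing with $G$ and the $\U$-entry yields $G^* \subseteq j(\R(\U))$ containing $j''G$, from which $\hat{j}$ extends canonically via $\hat{j}(\dot{X}^G) := j(\dot{X})^{G^*}$; the verification that $U^*$ is then a $\kappa$-complete ultrafilter in $V[G]$ is routine.

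The main obstacle is ensuring that the constructed $G^*$ is genuinely compatible with $j''G$ through the $\U$-entry, and this is exactly where the Repeat Property intervenes. For each $p = \vec{d} \fr \la \U, A\ra \in G$ the top entry of $j(p)$ is $\la j(\U), j(A)\ra$, while the intended master condition has $\la \U, A'\ra$ slotted in at position $\kappa$; legality of this slotting demands $\U \in j(A)$ and, more generally, that $\U$ pass all the measure-one tests at $\kappa$ coming from $\cap j(\U)$. When $\rho$ is a repeat point, the measures $U_\alpha$ with $\alpha \geq \rho$ contribute nothing beyond $\bigcup_{i < \rho} U_i$, so these tests reduce to tests derived from $\U \uhr \rho$, each of which $\U$ satisfies via the defining identity $A \in U_i \iff \U \uhr i \in j(A)$ together with $A \subseteq \bar{A}$ and $\U \in \bar{A}^{j(\U)}$. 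Without a repeat point the later measures of $\U$ would impose essential additional constraints that $\U$ itself need not satisfy, and no master condition could be assembled; the Repeat Property precisely removes this obstruction.
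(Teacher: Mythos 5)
Your overall strategy---lifting $j$ to $V[G]$ by manufacturing an $M$-generic filter for the tail of $j(\R(\U))$ inside $V[G]$---is not the argument that proves this theorem, and it contains two genuine gaps. The first is local: the identity $A \in U_i \iff \U\uhr i \in j(A)$ only puts the \emph{proper initial segments} $\U\uhr i$ into $j(A)$; it never puts $\U$ itself into $j(A)$ (that would require a ``measure $U_{\ell(\U)}$,'' which does not exist, and indeed $\U$ need not even be an element of $M$: the definition of a measure sequence only demands $\U\uhr\beta \in M$ for $\beta < \ell(\U)$, so your claim that ``the sequence $\U$ itself lies in $M$'' is a non sequitur). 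Hence the entry you slot at coordinate $\kappa$ cannot be $\la \U, A\ra$. The correct move---and the first real use of the repeat point---is to slot $\la \U\uhr\rho, A\ra$: since $\bigcap\U \subseteq U_\rho$ one does get $\U\uhr\rho \in j(A)$ for every $A \in \bigcap\U$, and the repeat point gives $\bigcap(\U\uhr\rho) = \bigcap\U$, so $\R(\U\uhr\rho)$ is literally the same forcing as $\R(\U)$ and $G$ serves as the generic for the factor below coordinate $\kappa$. Your closing paragraph conflates these two points: the repeat property does not make $\U$ pass the measure-one tests; it makes $\U\uhr\rho$ an acceptable substitute.

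The second gap is fatal to the plan as written: there is no chain-condition argument that cuts the $M$-dense open subsets of the tail $\R(j(\U))^{>\kappa}$ down to $\kappa^+$ many. In $M$ that forcing has size $|V_{j(\kappa)}|^M$ and $2^{j(\kappa)}$ many dense open subsets, which from the point of view of $V$ is on the order of $2^{2^\kappa}$, and its antichains (given by pairwise incompatible lower parts) have size up to $|j(\kappa)| > \kappa^+$. A $\leq^*$-decreasing recursion of length $\kappa^+$ cannot meet them all; moreover, $\kappa^+$-closure of the \emph{direct} extension order does not let you enter a dense open set of the \emph{full} order---by Lemma \ref{Lemma - structural} that requires choosing maximal branches through fat trees, i.e., one-point extensions, which is exactly what a $\leq^*$-recursion cannot supply. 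This is why Mitchell's proof (and its generalization in Section \ref{Section 4}) does not lift $j$ at all. Instead one defines the ultrafilter directly in $V[G]$ from the forcing relation: $X = \name{X}_G \in U^*$ iff there are $p = p_0 \fr \la\U, A_p\ra \in G$ and a measure function $b \in \MF$ with $A_p \subseteq [b]_{U_\rho}$ such that $\{\olm \in \MS \mid \exists A(\olm) \in \bigcap\U.\ p_0\fr\la\olm, b(\olm)\ra\fr\la\U, A(\olm)\ra \force \kappa(\olm) \in \name{X}\} \in U_\rho$. This is precisely the $U_W$ construction of Section \ref{Section 4} specialized to $W = U_\rho$, with $U_\rho$ measuring all of $\power(\MS)\cap V$ rather than a $\kappa$-sized model $N$ (which is what upgrades the conclusion from weak compactness to measurability); the repeat point enters through $[b]_{U_\rho} = j(b)(\U\uhr\rho) \in \bigcap(\U\uhr\rho) = \bigcap\U$, which is what allows the top measure-one set of a condition to be shrunk to $[b]_{U_\rho}$. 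I recommend reworking your proof along these lines.
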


We conclude this Section with a proposition concerning fresh subsets of $\kappa$ in a Radin/Magidor generic extensions. Although it will only be used in the last part of the paper (i.e., Lemma \ref{Lemma - Pi11}), we include it here as we believe it is of an independent interest. 

\begin{definition}[Joel Hamkins]
Let $V[G]$ be a generic extension of $V$. A set $X \subset \kappa$ in $V[G]$ is fresh if $X \cap \alpha \in V$ for every $\alpha < \kappa$. 
\end{definition}

The following result is originally due to Cummings and Woodin (\cite{CumWoo}). 

\begin{proposition}\label{Proposition - fresh sets}
Let $\rad(\U)$ be a Radin or a Magidor forcing on a cardinal $\kappa$. 
If the forcing $\R(\U)$ does not change the cofinality of $\kappa$ to $\omega$ then it does not add fresh subsets to $\kappa$. 
\end{proposition}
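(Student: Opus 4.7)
The plan is to argue by contrapositive: given $p \force \dot{X}$ is a fresh subset of $\kappa$, I would produce a direct extension $p^* \geq^* p$ forcing $\dot{X} \in V$.

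First, I associate a canonical ground-model candidate value. For each $\alpha < \kappa$ the Prikry condition yields a direct extension $p^*_\alpha \geq^* p$ deciding $\dot{X} \cap \alpha = Y_\alpha$ for some $Y_\alpha \in V$. Any two direct extensions of $p$ admit a common direct extension --- obtained by intersecting top measure-one sets, which remain in $\bigcap \U$ by $\kappa$-completeness of each $U_i$ --- so the value $Y_\alpha$ is independent of the chosen $p^*_\alpha$. The same compatibility yields coherence $Y_\alpha = Y_\beta \cap \alpha$ for $\alpha < \beta$, so $Y := \bigcup_{\alpha < \kappa} Y_\alpha$ is a well-defined set in $V$.

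Next, for each $\alpha$ I isolate the measure-one set $A_\alpha := \{\olm \in A^*_\alpha : \kappa(\olm) > \alpha\} \in \bigcap \U$, where $A^*_\alpha$ is the top measure-one set of $p^*_\alpha$. For every $\olm \in A_\alpha$, the condition $p^*_\alpha \fr \la \olm \ra$ extends $p^*_\alpha$ (so forces $\dot{X} \cap \alpha = Y_\alpha$) while being a direct extension of $p \fr \la \olm \ra$; hence $Y_\alpha$ is also the canonical value of $\dot{X} \cap \alpha$ under $p \fr \la \olm \ra$. Using normality of each $U_i$, set $A^* := \triangle_{\alpha < \kappa} A_\alpha \in \bigcap \U$ and define $p^* := \vec{d} \fr \la \U, A^*\ra$. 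An induction on the number of one-point extensions then shows that for every sequence $\vec{\eta}$ whose measure sequences occur at levels exceeding $\alpha$, the canonical value of $\dot{X} \cap \alpha$ under $p^* \fr \vec{\eta}$ is $Y_\alpha$.

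Finally, the hypothesis $\cf(\kappa)^{V[G]} \neq \omega$ enables the conclusion $\dot{X}^G = Y$. Given a generic $G \ni p^*$ and $\alpha < \kappa$, the set $\MS_G$ is cofinal in $\kappa$ of uncountable cofinality, so some $\olm \in \MS_G$ has $\kappa(\olm) > \alpha$. The condition in $G$ realizing this one-point extension forces $\dot{X} \cap \alpha = Y_\alpha$ by the previous step, giving $\dot{X}^G \cap \alpha = Y_\alpha$ for every $\alpha$ and hence $\dot{X}^G = Y \in V$. The main obstacle is the inductive step when $\vec{\eta}$ contains measure sequences at levels below $\alpha$, since those trigger the small factor forcings from the Prikry-factorization lemma which a priori could alter decisions of $\dot{X} \cap \alpha$. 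Overcoming this requires recursively applying the canonical-value construction to the factors $\R(\olm(d_i))$ along the lower stem. The cofinality hypothesis is essential to rule out the Prikry-style scenario in which an $\omega$-cofinal $\MS_G$ produces a new fresh $\dot{X}^G$ evading $Y$ at arbitrary levels.
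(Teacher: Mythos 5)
Your argument breaks down at the very first step, and the failure is not repairable within your framework. The Prikry property gives a direct extension deciding a \emph{single} sentence of the forcing language; it does not give a direct extension $p^*_\alpha \geq^* p$ deciding the entire set $\dot X \cap \alpha$, and in general no such direct extension exists. The value of $\dot X \cap \alpha$ for a fresh name may genuinely depend on the part of the generic object lying below $\alpha$ (the points of $C_G \cap \alpha$ and their attached measure sequences), and no direct extension controls these: direct extensions only shrink measure-one sets, so below any $p^* \geq^* p$ all possible lower-part generics below $\alpha$ remain available. A concrete witness: let $\dot X$ name $\kappa$ if $\min(C_G)$ is even and $\kappa \setminus \{0\}$ otherwise. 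This name is forced to be fresh (indeed its value lies in $V$), yet no direct extension of the trivial condition decides whether $0 \in \dot X$. So $Y_\alpha$ is not well defined, the coherent union $Y$ does not exist, and the ``canonical value'' machinery that the rest of your proof relies on never gets off the ground. You do flag the case of one-point extensions below $\alpha$ as ``the main obstacle,'' but this is not an edge case to be handled by a recursive patch --- since $C_G$ is club in $\kappa$, the generic condition witnessing $\dot X \cap \alpha = Y_\alpha$ always carries a nontrivial stem below $\alpha$, and the dependence on that stem is exactly what makes the naive plan impossible.

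The paper's proof is structured precisely to avoid committing to a value of $\dot X \cap \alpha$ for arbitrary $\alpha$. It calls $p$ \emph{good} if $p$ decides $\tau \cap \beta(p)$ where $\beta(p)$ names the \emph{next} generic point above the stem of $p$; there the only undetermined lower-part data is the single new order-zero point $\oln$, and one can average the decided values $s(\oln)$ over $U_0$ to get a single $S_p \in V$ and a measure-one set on which the decision is uniform. Density of good conditions replaces your (false) claim that direct extensions decide initial segments. The conclusion is then reached by contradiction inside $V[G]$: assuming $\tau_G$ differs from every ground-model set, one builds an increasing $\omega$-chain of good conditions $p^n \in G$ with $\tau_G \cap \kappa_0(p^{n+1}) \neq S^n \cap \kappa_0(p^{n+1})$; the hypothesis $\cf(\kappa)^{V[G]} > \omega$ puts $\gamma = \sup_n \kappa_0(p^n)$ into $C_G$, and comparing the ground-model set $X$ with $\tau \cap \gamma = X \cap \gamma$ against the $S^n$ at a suitably chosen order-zero point below $\gamma$ yields the contradiction. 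If you want to salvage a direct (non-contradiction) argument, you would still need some analogue of the goodness/averaging step; deciding $\dot X \cap \alpha$ by $\leq^*$ alone cannot be the starting point.
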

\begin{proof}
Let $\tau$ be a name of a subset of $\kappa$, such that $0_{\rad}$ forces $\tau \cap \beta \in V$ for every $\beta < \kappa$.
We introduce the following terminology to prove that $\tau$ must coincide with a set $S \in V$. 
For a condition $p = \vec{d} \fr \la\U,A\ra$, $\vec{d} = \la d_i \mid i < k\ra$, let $\supp(p) = \{ \kappa(d_i) \mid i < k\}$, $\kappa_0(p)= \max(\supp(p))$, and $\beta(p)$ denote the $\R(\U)$ name of the minimal ordinal on the Radin generic closed unbounded set $C_G$, which is above $\kappa_0(p)$.
We call the condition $p = \vec{d} \fr \la U,A\ra$ good, if there exists $S \subset \kappa$ in $V$ such that 
$p \force \tau \cap \beta(p) = \can{S} \cap \beta(p)$. 
We denote the set $S$ by $S_p$. Let us first show that the set of good conditions is dense in $\R(\U)$.
Fix $p = \vec{d} \fr \la \U,A\ra$ in $\rad(\U)$. For every $\oln\in A$ of order $0$ (i.e., $\oln = \la \kappa(\oln) \ra$) then $p \fr \la \oln \ra$ has an extension $q = \vec{d}(\oln)\fr \la \oln \ra \fr \la \U, B(\oln)\ra$ forcing $\tau \cap \kappa(\oln) = s(\oln)$ for some $s(\oln) \subset \kappa(\oln)$ in $V$. Note that $\vec{d}$ and $\vec{d}(\oln)$ must have the same maximal ordinal $\kappa(d_{k-1})$. In particular $\vec{d}(\oln) \in V_{\kappa_0(p) + 1}$. 
Next, set  $\vec{d} = [\vec{d}(\oln)]_{U_0}$, $B = \Delta_{\oln}B(\oln)$, 
and $S_p = [s(\oln)]_{U_0}$, then there exists  $A(0) \in U_0$ such that for each $\oln \in A(0)$, $\vec{d} = \vec{d}(\oln)$, $s(\oln) = S_p \cap \kappa(\oln)$, and $B(\oln) \subset B \setminus V_{\kappa(\oln)}$.  Let $A^*$ be the set obtained from $A \cap B$, by reducing the order $0$ measure sequences to $A(0)$. Then $p^* = \vec{d} \fr \la \U, A^* \ra$ is good. \\
Let  $G \subset \rad(\U)$ be a generic filter and suppose  $\tau_G \neq S$ for every set $S \subset \kappa$ in$V$.  Working in $V[G]$, we define an increasing sequence of good conditions $\la p^n \mid n  < \omega\ra$
in $G$. Let $p^0 \in G$ be a good condition and denote $S_{p^0}$ by $S^0$.
Given $p^n \in G$ and $S^n = S_{p^n}$, we use the fact $\tau_G \neq S^n$ to find $p^{n+1} \geq p^n$ in $G$, such that $\kappa_0(p^{n+1}) \cap \tau_G \neq \kappa_0(p^{n+1}) \cap S^n$. We may also 
assume  $p^{n+1}$ is good and set $S^{n+1} = S_{p^{n+1}}$. Clearly, $\kappa_0(p^{n+1}) > \kappa_0(p^{n})$, 
$S^{n+1} \cap \kappa_0(p^n) = S^{n} \cap \kappa_0(p^n)$, and
$S^{n+1} \cap \kappa_0(p^{n+1}) \neq S^n \cap \kappa_0(p^{n+1})$.
Next, let $\gamma = \bigcup_{n<\omega} \kappa_0(p^n)$. Since $\cf(\kappa)^{V[G]} > \omega$, 
  $\gamma \in C_G$. 
By the construction of the conditions $p^n \in G$, $\tau_G \cap \gamma \neq S^n \cap \gamma$ for all $n < \omega$. Let $q \in G$ and $X \subset \gamma$ in $V$ such that
$\gamma \in \supp(q)$ and $q \force \tau \cap \can{\gamma} = \can{X} \cap \can{\gamma}$.
Let us write $q = \vec{d^0} \fr \vec{d^1} \fr \la \U, A_q\ra$, where $\vec{d^0} = \la d^0_i \mid i \leq k^0\ra$, and 
$\kappa(d^0_{k^0}) = \gamma$. 
Take $n < \omega$ so that $\kappa_0(p^n) > \supp(q) \cap \gamma$, and $q^n \in G$ be the minimal common extension of $p^n$ and $q$. Note that $\max(\supp(q^n) \cap \gamma) = \kappa_0(p^n)$.
Pick $\oln \in a_{k^0}(q)$ with $\ell(\oln) = 0$, so that $X \cap \kappa(\oln) \neq S^n \cap \kappa(\oln)$
and consider the extension $q^n \fr \la  \oln \ra$ of $q^n$. 
By the choice of $q^n \geq p^n$,   $q^n \fr \la \oln\ra \force \kappa(\oln) = \beta(p^n)$, 
thus $q^n \fr \la \oln\ra \force \tau \cap \can{\kappa(\oln)} = S^n \cap \can{\kappa(\oln)}$. 
This is an absurd as $q^n \fr \la \oln \ra \geq q$ and therefore  $q^n \fr \la \oln\ra \force \tau \cap \can{\kappa(\oln)} = \can{X} \cap \can{\kappa(\oln)} \neq S^n \cap \can{\kappa(\oln)}$. 
\end{proof}

\section{Radin forcing and stationarity of ground model sets}\label{Section 2}
We utilize Lemma \ref{Lemma - structural} to determine which subsets of $\kappa$ remain stationary in a generic extension by $\R(\U)$.
It is known that if $\U$ is a $\mo$-increasing sequence of measures of length $\ell(\U) < \kappa$ such that $\cf(\ell(\U))$ is uncountable,  then $\kappa$ becomes singular of uncountable cofinality in a Magidor forcing extension by $\U$, and for every $X \subset \kappa$ in $V$,  $X$ remains a stationary subset of $\kappa$ in a generic extension if and only if $X \in U_\tau$ for closed unbounded many $\tau < \ell(\U)$. 
This characterization of ground model sets which remain stationary does not apply to $\R(\U)$ when $\otp(\ell(\U)) \geq \kappa^+$. 

\begin{definition}
\begin{enumerate}
\item Let $Z \subset \MS$ in $V$. We say that $Z$ is $\U$-positive if $Z \in U_\tau$ for unbounded many ordinals $\tau < \ell(\U)$. 
\item For a set $Z \subset \MS$ we define $O(Z) = \{ \kappa(\olm) \mid \olm \in Z\}$. 
\end{enumerate}
\end{definition}

\begin{proposition}\label{Prop - Stationary 1}
Suppose that $cf(l(\U)) \geq \kappa^+$. Then for every $Z \subset \MS$ in $V$, if $Z$ is $\U$-positive then $O(Z)$ is stationary in $V[G]$.
\end{proposition}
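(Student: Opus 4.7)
My plan is to reduce the problem to a statement in the ground model by exploiting the Prikry property of $\R(\U)$ to replace the name $\name{C}$ by a ground-model upper bound function on $\kappa$. Fix $p_0 \in \R(\U)$, a name $\name{C}$ for a club in $\kappa$, and $\gamma < \kappa$; I seek $q \geq p_0$ and $\alpha \in O(Z) \cap (\gamma, \kappa)$ with $q \force \alpha \in \name{C}$. Producing such $q,\alpha$ for every $p_0, \name{C}, \gamma$ implies $O(Z)$ is stationary in $V[G]$.

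The main step is to construct $p^* \geq^* p_0$ and a function $f \colon \kappa \to \kappa$ in $V$ with
\[
p^* \force \name{C} \cap (\alpha, f(\alpha)] \neq \emptyset \quad \text{for every } \alpha < \kappa.
\]
For each fixed $\alpha < \kappa$, by the Prikry property together with the fact that $\name{C}$ is forced to be unbounded in $\kappa$, there is a direct extension of $p_0$ and some $\beta_\alpha < \kappa$ forcing $\name{C} \cap (\alpha, \beta_\alpha] \neq \emptyset$. Iterating over $\alpha < \kappa$ and using the $\kappa^+$-closure of $\leq^*$ on the top coordinate of the forcing (which accommodates $\leq^*$-chains of length up to $\kappa$) combines these decisions into a single $p^*$, setting $f(\alpha) := \beta_\alpha$. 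This step is the main technical obstacle: one must carefully arrange the iteration so that the $\kappa$-many decisions can be coherently captured along a single direct extension, without losing compatibility as the top measure-one set is thinned.

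Given $(p^*, f)$, the closure set $C_f = \{\alpha < \kappa : f[\alpha] \subseteq \alpha\}$ is a club of $\kappa$ in $V$. Separately, $O(Z)$ is stationary in $V$: for any $\tau < \ell(\U)$ with $Z \in U_\tau$, the projection $W_\tau := \{X \subseteq \kappa : \{\oln : \kappa(\oln) \in X\} \in U_\tau\}$ is a normal measure on $\kappa$ (derived from the normality of $U_\tau$ on $V_\kappa$), and the inclusion $Z \subseteq \{\oln : \kappa(\oln) \in O(Z)\}$ yields $O(Z) \in W_\tau$; hence $O(Z)$ is stationary, and $\U$-positivity of $Z$ provides such a $\tau$. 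Pick $\alpha^* \in C_f \cap O(Z)$ with $\alpha^* > \max(\gamma, \max(\supp(p_0)))$. Then $f(\beta) < \alpha^*$ for every $\beta < \alpha^*$, so $p^*$ forces $\name{C} \cap \alpha^*$ to be unbounded in $\alpha^*$. Since $p^*$ forces $\name{C}$ to be closed and $\alpha^* = \kappa(\oln^*)$ for some $\oln^* \in Z$ is a limit ordinal, $p^* \force \alpha^* \in \name{C}$, providing the required witness.
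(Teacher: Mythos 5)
Your reduction to a ground-model bound function $f$ has a genuine gap at its main step, and the step is not merely technically delicate --- it is false. You claim that for each fixed $\alpha<\kappa$ the Prikry property plus unboundedness of $\name{C}$ yields a direct extension of $p_0$ and an ordinal $\beta_\alpha$ with $p^*\force \name{C}\cap(\alpha,\beta_\alpha]\neq\emptyset$. Take $\name{C}$ to be the name of the Radin club $C_G$ itself and $\alpha\geq\max(\supp(p_0))$. Any direct extension $p^*=p_0\fr\la\U,A^*\ra$ of $p_0$ admits the further direct extension $p_0\fr\la\U,A^*\setminus V_{\beta+1}\ra$, which forces $C_G\cap(\alpha,\beta]=\emptyset$; hence no direct extension decides $\name{C}\cap(\alpha,\beta]\neq\emptyset$ positively, for any $\beta$. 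The Prikry property only gives you a direct extension \emph{deciding} each such statement, and here all decisions above the stem are negative; no amount of care with the iteration or with diagonal intersections on the top coordinate can repair this, because the obstruction already occurs for a single $\alpha$. A second symptom of the problem: your argument uses the $\U$-positivity of $Z$ only to conclude that $O(Z)$ is stationary in $V$, so if it worked it would show that \emph{every} ground-model stationary subset of $\kappa$ remains stationary in $V[G]$. That contradicts Proposition \ref{proposition - Stationary 2}, by which $O(Z)$ contains a club of $V[G]$ whenever $Z\in\bigcap(\U\setminus\tau)$, even though $\kappa\setminus O(Z)$ can easily be stationary in $V$.

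The missing idea is that membership of $\kappa(\oln)$ in $\name{C}$ cannot be secured by shrinking measure-one sets; it must be secured by showing that every extension of $p^*\fr\la\oln\ra$ can be further extended, by adding points \emph{below} $\kappa(\oln)$, into the dense sets $D_i$ of conditions forcing the $i$-th element of $\name{C}$ under the top of the stem, for all $i<\kappa(\oln)$. This is exactly what the fat-tree machinery of Lemma \ref{Lemma - structural} provides: each $D_i$ and lower part $\vec{d}$ determines fat trees $T_{i,\vec{d}}$, and the hypothesis $\cf(\ell(\U))\geq\kappa^+$ guarantees an $\alpha^*<\ell(\U)$ bounding the indices of all $\kappa$ many splitting measures, so that the set $\Gamma$ of $\oln$ for which every relevant $T_{i,\vec{d}}\cap V_{\kappa(\oln)}$ is $\oln$-fat lies in every $U_\gamma$ with $\gamma\geq\alpha^*$. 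The $\U$-positivity of $Z$ is then used essentially, to find $\gamma\geq\alpha^*$ with $Z\in U_\gamma$ and hence some $\oln\in Z\cap\Gamma\cap A^*$; for such $\oln$, witnesses to $\name{\beta}_i<\kappa(\oln)$ can be inserted below $\kappa(\oln)$ compatibly with any extension of $p^*\fr\la\oln\ra$. Your closure-point-of-$f$ idea has no analogue of this reflection-of-dense-sets step, which is where the hypothesis on $\cf(\ell(\U))$ actually does its work.
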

\begin{proof}
Let $\tau$ be a $\R(\U)$-name for a closed unbounded subset in $\kappa$. 
We show that every condition $p$ has an extension forcing $O(\can{Z}) \cap \tau \neq \can{\emptyset}$. 
For a condition $q = q_0 \fr \la \U,B\ra$ where $q_0 = \la d_i \mid i < k\ra$, we denote $\supp(q_0) = \{\kappa(d_i) \mid i <k\}$ and $\kappa_0(q) = \max(\supp(q_0))$. 
For every $i < \kappa$, let $D_i \subset \R(\U)$ be the dense open set of all conditions $q = q_0 \fr \la \U, B\ra$ such that $q \force \name{\beta}_i < \kappa_0(q)$, where $\name{\beta}_i$ is the name of the $i-$th element of $\tau$. 
By Lemma \ref{Lemma - structural}, for each $\vec{d} \in \R_{<\kappa}$ there is a sequence of fat trees $\la T_1,\dots, T_m\ra$ associated with $\vec{d}$ and $D_i$. Following the notations of the Lemma, let $T_{i,\vec{d}}$ denote the top tree $T_m$ if it is a $\U$-fat tree, and $A_{i,\vec{d}} \in \bigcap \U$ be the top measure-one set in the condition $p^*_{D_{i}}$. 
Since $cf(l(\U)) \geq \kappa^+$, and there are at most $\kappa$ many trees of the form $T_{i,\vec{d}}$, there exists some $\alpha^*  <\ell(\U)$ which is greater than the indices of all measures associated with the splitting levels of the fat trees $T_{i,\vec{d}}$, $i,\vec{d} \in V_\kappa$.
Define $ \Gamma = \{\oln \in MS \mid \forall i,\vec{d} \in V_{\kappa(\oln)}. \ T_{i,\vec{d}} \cap V_{\kappa(\oln)} \text{ is a } \oln \text{-fat tree }\} $.
It follows that $\Gamma \in \bigcap_{\gamma \geq \alpha^*}U_\gamma$, in particular there exists some $\gamma \geq \alpha$ such that $Z \in U_\gamma$. 
Define $A^* = \triangle_{i,\vec{d}}A_{i,\vec{d}}$ and $p^* = p_0 \fr \la \U, A^* \ra$. \\

\textbf{Claim*:} For every $\oln \in \Gamma \cap Z \cap A^*$,  $p^* \fr \la \oln \ra \force \kappa(\oln) \in \tau$.\\
It is sufficient to verify $p^* \fr \la \oln \ra \force \kappa(\oln) = \beta_{\kappa(\oln)}$. Clearly, $q \force \name{\beta}_{\kappa(\oln)} \geq \kappa(\oln)$, and since $p \force \tau \text{ is closed unbounded}$, it is actually sufficient to show that $p^* \fr \la \oln \ra \force \name{\beta}_{i} < \kappa(\oln)$ for all $i < \kappa(\oln)$.
Fix $i < \kappa(\oln)$, and  $r \geq p^* \fr \la \oln \ra$. We claim $r$ has an extension which forces that $`` \beta_i < \kappa(\oln)``$. 
Suppose $r= r_0 \fr \la \oln, b \ra \fr r_1 \fr \la \U,A_r\ra$. Our construction of $p^*$ guarantees 
$r_0 \in V_{\kappa(\oln)}$ and that  every measure sequence $\oln$ in $r_1$ belongs to $A_{i,r_0}$. Moreover, as $\oln \in \Gamma$, $T_{i,r_0} \cap V_{\kappa(\oln)}$ is a fat $\oln-$tree.
Let $T_1,\dots, T_m$ be the sequence of fat trees associated with $\vec{d} = r_0$.
For every sequence of maximal branches $\mathfrak{t} = \la \vec{\eta}_l \mid 1 \leq l \leq m\ra$ through $\la T_1,\dots,T_m\ra$ respectively,  there is a  sequence of sequences of sets $\mathfrak{a}= \la \vec{A_l} \mid 1 \leq l \leq m\ra$, 
such that the extension
$(r_0 \fr \la \oln,b\ra \fr \la \U,A_{i,r_0} \ra) \fr \la \vec{\eta}_1,\vec{A_1} \ra \fr \la \vec{\eta}_2,\vec{A_2}\ra \fr \dots \fr \la \vec{\eta}_m,\vec{A_m} \ra$ 
belongs to $D_i$. Denote the last condition by $r_0^{+(\mathfrak{t},\mathfrak{a})}$. 
Since $T_{i,\vec{d}} \cap V_{\kappa(\oln)}$ is $\oln$-fat, there is a sequence of maximal branches $\mathfrak{t} = \la \vec{\eta}_l \mid 1 \leq l \leq m\ra$ consisting only of $V_{\kappa(\oln)}$ elements, resulting in a condition $r_0^{+(\mathfrak{t},\mathfrak{a})}$ which is compatible with $r$.  
Finally, as $\kappa_0(r_0^{+(\mathfrak{t},\mathfrak{a})}) = \kappa(\oln)$ and  $\kappa_0(r_0^{+(\mathfrak{t},\mathfrak{a})}) \in D_i$, 
$r_0^{+(\mathfrak{t},\mathfrak{a})} \force \name{\beta_i} < \kappa(\can{\oln})$.
Claim* and the Proposition follow.
\end{proof}

As an immediate corollary of the Lemma, we obtain the following 
result of Woodin.

\begin{corollary}[Woodin]\label{Corollary - Mahloness}
For every $\tau \leq \kappa^+$, If $\otp(l(\U))$ is the ordinal exponent $(\kappa^+)^{1+\tau}$ then $\kappa$ is $\tau$-Mahlo in a $\R(\U)$ generic extension.
\end{corollary}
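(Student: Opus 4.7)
The plan is to prove, by transfinite induction on $\tau \leq \kappa^+$, the stronger statement that whenever $\otp(\ell(\U)) \geq (\kappa^+)^{1+\tau}$ the cardinal $\kappa$ is $\tau$-Mahlo in $V[G]$. For $\tau = 0$ (understood as inaccessibility), Theorem \ref{Theorem - Regularity} delivers regularity, and since $C_G$ is cofinal in $\kappa$ and consists of cardinals measurable in $V$, the Prikry condition together with the $\kappa_m^+$-closure of the upper-factor direct extension order guarantees that no bounded subset of $\kappa$ acquires cardinality $\kappa$, so $\kappa$ remains a strong limit and is inaccessible. For a limit $\tau$, one simply applies the inductive hypothesis at each $\sigma < \tau$ (using $\otp(\ell(\U)) \geq (\kappa^+)^{1+\tau} > (\kappa^+)^{1+\sigma}$) to deduce $\tau$-Mahloness. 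The substance lies in the successor case.

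For $\tau = \sigma + 1$, I would set $Z_\sigma = \{\olm \in \MS : \otp(\ell(\olm)) \geq (\kappa(\olm)^+)^{1+\sigma}\}$ and first verify that $Z_\sigma$ is $\U$-positive. Unwinding the definition of $U_\gamma$ via the embedding $j : V \to M$ witnessing $\U$, membership $Z_\sigma \in U_\gamma$ reduces to $M \models \gamma \geq (\kappa^+)^{1+\sigma}$; since $(\kappa^+)^M \leq (\kappa^+)^V$ and ordinal exponentiation is absolute between transitive models, the set of such $\gamma$ contains the end-segment $[(\kappa^+)^{1+\sigma}, \ell(\U))$, which is cofinal in $\ell(\U) \geq (\kappa^+)^{1+\sigma} \cdot \kappa^+$. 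Proposition \ref{Prop - Stationary 1} is then applicable (its hypothesis $\cf(\ell(\U)) \geq \kappa^+$ is met because $1 + \tau$ is a successor ordinal) and yields that $O(Z_\sigma)$ is stationary in $V[G]$.

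To conclude the successor step, I would fix $\olm \in Z_\sigma \cap \MS_G$ together with a condition $p \in G$ having $\olm$ at position $m$. The factorization $\R(\U)/p \cong \R(\olm)/p^{\leq m} \times \R(\U)/p^{>m}$ splits $G$ into two generics. The upper factor's direct extension order is $\kappa(\olm)^+$-closed, and combined with the Prikry property this upper factor adds no new subsets of $\kappa(\olm)$, so $\sigma$-Mahloness of $\kappa(\olm)$ is absolute between $V^{\R(\olm)}$ and $V[G]$. The inductive hypothesis applied to $\olm$ supplies $\sigma$-Mahloness in the intermediate model, thereby in $V[G]$. Hence $O(Z_\sigma)$ is a stationary subset of $\kappa$ consisting of $\sigma$-Mahlo cardinals, and $\kappa$ is $(\sigma + 1)$-Mahlo. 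The main obstacle is the clean verification of $\U$-positivity of $Z_\sigma$, which rests on $j$ having enough closure for $(\kappa^+)^M$ and the ordinal arithmetic of $M$ to agree with $V$'s along the relevant interval of $\ell(\U)$; a secondary point is the upward transfer of $\sigma$-Mahloness across the upper Radin factor, which hinges entirely on its no-new-subsets property.
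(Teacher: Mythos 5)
Your overall route is the one the paper intends: the corollary is stated as an immediate consequence of Proposition \ref{Prop - Stationary 1}, and the induction on $\tau$, with $Z_\sigma$ pushed through that proposition and the factorization $\R(\U)/p \cong \R(\olm(d_m))/p^{\leq m} \times \R(\U)/p^{>m}$ used to transfer $\sigma$-Mahloness of $\kappa(\olm)$ upward, is exactly the right skeleton. The base case and the upward transfer (no new subsets of $\kappa(\olm)$ from the upper factor, by the Prikry property plus $\kappa(\olm)^+$-closure of $\leq^*$) are fine. But there are two genuine gaps. First, and most concretely: for $\sigma \geq \kappa$ your set $Z_\sigma = \{\olm \in \MS : \otp(\ell(\olm)) \geq (\kappa(\olm)^+)^{1+\sigma}\}$ is \emph{empty}, since every $\olm \in \MS$ lives on some $\nu < \kappa$ and has $\ell(\olm) < \kappa \leq (\nu^+)^{1+\sigma}$ (ordinal exponentiation with base $\geq 2$ dominates the exponent). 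So the successor step produces nothing for $\tau \in (\kappa, \kappa^+]$, which is precisely the range containing the greatly Mahlo case the corollary is meant to cover. To reach those $\tau$ one must define the Mahlo hierarchy above $\kappa$ via a canonical sequence $\la \sigma_\alpha \mid \alpha < \kappa \ra$ representing $\sigma$ and take $Z_\sigma = \{\olm : \otp(\ell(\olm)) \geq (\kappa(\olm)^+)^{1+\sigma_{\kappa(\olm)}}\}$; correspondingly, your positivity computation ``$Z_\sigma \in U_\gamma$ iff $M \models \gamma \geq (\kappa^+)^{1+\sigma}$'' is no longer a matter of absoluteness, because $j(\sigma) > \sigma$ once $\sigma \geq \kappa$, and the exponent appearing in $j(Z_\sigma)$ is governed by the canonical sequence, not by $\sigma$ itself.

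Second, the strengthening to ``$\otp(\ell(\U)) \geq (\kappa^+)^{1+\tau}$'' is not a harmless convenience: Proposition \ref{Prop - Stationary 1} requires $\cf(\ell(\U)) \geq \kappa^+$, and an arbitrary length $\geq (\kappa^+)^{1+\tau}$ can have small cofinality (indeed, by Remark \ref{Remark - Cof kappa}, for such lengths without a repeat point $\kappa$ may even be singularized, so the strengthened statement is false as written). The strengthening cannot simply be dropped, because the measure sequences $\olm \in Z_\sigma$ to which you apply the inductive hypothesis only satisfy an inequality on $\ell(\olm)$. The repair is to build the cofinality requirement into both the inductive statement and the set $Z_\sigma$ (i.e., demand $\cf(\ell(\olm)) \geq \kappa(\olm)^+$ there), and then to check positivity by noting that the ordinals $\gamma < \ell(\U) = (\kappa^+)^{1+\sigma} \cdot \kappa^+$ with $\gamma \geq (\kappa^+)^{1+\sigma}$ and $\cf(\gamma) = \kappa^+$ are unbounded in $\ell(\U)$ and that $\U\uhr\gamma$ for such $\gamma$ lands in $j(Z_\sigma)$. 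With these two adjustments the argument goes through, but as written the successor step fails for $\sigma \geq \kappa$ and the inductive statement is too strong to be proved by the step you describe.
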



The next result shows that assuming the sequence $\U$ does not contain a repeat point, the sufficient condition given in Proposition \ref{Prop - Stationary 1} for $O(Z)$ to be stationary is also necessary.

\begin{proposition}\label{proposition - Stationary 2}
Suppose $\U$ is a measure sequence of limit length which does not contain a repeat point. For every $Z \subset \MS$, if 
$Z \in \bigcap (\U\setminus \tau) = \bigcap\{ U_{\rho} \mid \tau \leq \rho < \ell(\U)\}$ for some $\tau < \ell(\U)$ then $O(Z)$ contains a closed unbounded set in a $\R(\U)$ generic extension.
\end{proposition}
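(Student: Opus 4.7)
The plan is to refine the top measure-one set of the given condition so that the generic measure sequences fall into $Z$ modulo a set whose image in $V[G]$ is non-stationary, and then to establish the non-stationarity as a converse to Proposition \ref{Prop - Stationary 1}, using the no-repeat-point hypothesis. Concretely, given $p = p_0 \fr \la \U, A \ra$, the set
\[
A^* := A \cap \bigl(Z \cup \{\olm \in \MS \mid \ell(\olm) < \tau\}\bigr)
\]
lies in $\bigcap \U$: for $\rho < \tau$, $\ell(\U \uhr \rho) = \rho < \tau$ gives $\{\olm \mid \ell(\olm) < \tau\} \in U_\rho$; for $\rho \geq \tau$, $Z \in U_\rho$ by hypothesis. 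Hence $p^* := p_0 \fr \la \U, A^* \ra$ is a direct extension of $p$ with the property that every $\olm \in A^*$ satisfies $\ell(\olm) < \tau$ or $\olm \in Z$. By the standard density argument, $\MS_G \setminus A^*$ is bounded in $V_\kappa$ for any generic $G \ni p^*$, so $D_1 := \{\nu \in C_G \mid \olm_\nu \in A^*\}$ is co-bounded in $C_G$ and, in particular, contains a club.

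The main step is to show that $D_2 := \{\nu \in C_G \mid \ell(\olm_\nu) \geq \tau\}$ also contains a club, i.e., that $\{\nu \in C_G \mid \ell(\olm_\nu) < \tau\}$ is non-stationary in $V[G]$. The ground-model set $W := \{\olm \mid \ell(\olm) < \tau\}$ lies in $U_\rho$ if and only if $\rho < \tau$, so $W$ is not $\U$-positive; the required non-stationarity is therefore a converse to Proposition \ref{Prop - Stationary 1}. I would prove this converse by adapting the fat-tree analysis of Lemma \ref{Lemma - structural}: given a condition and a $\R(\U)$-name $\dot{T}$ for a closed unbounded subset of $\kappa$, the no-repeat-point hypothesis provides, for every $\rho \geq \tau$, a set in $U_\rho$ disjoint from $W$; using the fat-tree structure of the dense sets determined by $\dot{T}$, one then constructs an extension forcing a closed unbounded subset of $\dot{T}$ into $\{\nu \mid \olm_\nu \notin W\}$.

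The intersection $D_1 \cap D_2$ is then a club in $V[G]$, and for $\nu \in D_1 \cap D_2$ one has $\olm_\nu \in A^*$ and $\ell(\olm_\nu) \geq \tau$, forcing $\olm_\nu \in Z$ and hence $\nu \in O(Z)$. The main obstacle is the converse to Proposition \ref{Prop - Stationary 1}; the no-repeat-point hypothesis is essential there, since in its absence the measures $U_\rho$ for $\rho \geq \tau$ could be covered by $\bigcup_{\beta < \tau} U_\beta$, allowing low-length measure sequences to generate a stationary subset of $C_G$ and invalidating the non-stationarity argument.
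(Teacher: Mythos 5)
There is a genuine gap, and it occurs in two places. First, your surrogate set $W = \{\olm \in \MS \mid \ell(\olm) < \tau\}$ is not the right reflection of the cut at $\tau$. Every $\olm \in \MS$ lies in $V_\kappa$, so $\ell(\olm) < \kappa$; hence whenever $\tau \geq \kappa$ --- which is the typical situation, since all applications of this proposition have $\ell(\U) \geq \kappa^+$ --- your $W$ is all of $\MS$, $A^* = A$, and $D_2 = \emptyset$, so the decomposition yields nothing. Even for $\tau < \kappa$, the claim that $W \in U_\rho$ iff $\rho < \tau$ uses $\U\uhr\rho \in j(W)$ iff $\rho < j(\tau)$, so it requires $j(\tau) = \tau$. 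The correct local surrogate must be defined from data living in $V_{\kappa(\olm)}$, and this is exactly where the no-repeat-point hypothesis enters: one fixes $B \in U_\tau \setminus \bigcup(\U\uhr\tau)$ and replaces the comparison of lengths by membership of $B \cap V_{\kappa(\olm)}$ in the measures of $\olm$. Your construction of $A^*$ never uses the no-repeat-point hypothesis, which is a warning sign.

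Second, your ``main step'' --- that $\{\nu \in C_G \mid \olm_\nu \in W\}$ is non-stationary --- is deferred to an unproven ``converse to Proposition \ref{Prop - Stationary 1}'', and that converse is essentially the proposition you are trying to prove (indeed in a stronger form: you need a club inside $\{\nu \in C_G \mid \olm_\nu \notin W\}$, not merely inside the ground-model set $O(\MS \setminus W)$). The sketch via fat trees is also aimed in the wrong direction: fat trees are the tool for showing a set meets every named club (stationarity), whereas here you must exhibit a club avoiding a set. The paper's proof needs no such converse. It sets $Z^{\leq} = \{\olm \in \MS \mid Z \cap V_{\kappa(\olm)} \notin \bigcup\olm\}$, arranges (using $B$ as above) that $Z \cup Z^{\leq} \in \bigcap\U$, and shows directly that $O(\MS_G \cap Z)$ is closed: if a generic $\oln$ with $\kappa(\oln) = \alpha$ lies in $Z^{\leq}$, then $Z \cap V_\alpha$ is null for every measure of $\oln$, so shrinking the measure-one set attached to $\oln$ by removing $Z$ forces $\alpha$ not to be a limit point of $O(\MS_G \cap Z)$; hence every limit point comes from some $\oln \in Z$. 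You should replace your $W$ by $Z^{\leq}$ (after intersecting $Z$ with the set derived from $B$) and replace the non-stationarity claim by this direct closure argument.
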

\begin{proof}
Fix $Z,\tau < \ell(\U)$ as in the statement of the Lemma. We show that for every $p = p_0 \fr \la \U,A^p\ra \in \R(\U)$ there is a direct extension $p^* = p_0 \fr \la \U,A^*\ra$ forcing that $O(Z)$ contains a closed unbounded set. 
  Since $\tau$ is not a repeat point, there exists $B \in U_\tau \setminus (\bigcup \U\uhr \tau)$.
  Defining $B' = \{ \olm \in \MS \mid \exists i < \ell(\olm). B \cap V_{\kappa(\olm)} \in \olm(i) \setminus (\cup \olm\uhr  i)\}$,  it is routine to verify that for every $\rho < \ell(\U)$, $\U\uhr \rho \in j(B')$ if and only if $\rho > \tau$.  
  By replacing $Z$ with $Z \cap B$ we may assume $Z \in U_\rho$  only for $\rho > \tau$.
  Next, let $Z^{\leq} = \{ \olm \in \MS \mid Z \cap V_{\kappa(\olm)} \not\in \cup\olm\}$. It follows that  $Z^{\leq} \in U_\rho$ if and only if $\rho \leq \tau$. 
We define $A^* = A^p \cap (Z \bigcup Z^{\leq})$, $p^* = p_0 \fr \la \U,A^* \cap A\ra$, and 
$\name{D} = (\name{\MS_G} \setminus \max(p_0)) \cap Z$.  Let us show $p^* \force O(\name{D}) \text{ is closed unbounded in } \kappa$. 
Let $q = \vec{d} \fr \la \U,B\ra$ be an extension of $p^*$, and $\alpha < \kappa$. 
Since $Z$ is unbounded in $V_\kappa$, $q$ has a one point extension $q \fr \la \oln\ra$ where $\oln \in Z \setminus V_\alpha$. Thus $q \fr \la \oln \ra \force \kappa(\oln) \in O(\name{D}) \setminus \alpha$. Finally, suppose $\alpha < \kappa$ and $q \force \alpha \text{ is a limit point of } O(\name{D})$. 
We may assume $\alpha = \kappa(\oln)$ for some $\oln = \oln(d_i)$ for some $d_i \in \vec{d}$. 
Since $\kappa(\oln) > \max(p_0)$, $\oln \in A^* \subset Z^{\leq} \cup Z$. 
$\oln$ cannot be an element of $Z^{\leq}$ as otherwise, $Z \cap V_\alpha \not\in \cup \oln$ and by substituting $a(d_i)$  with $a(d_i) \setminus Z$, we can form a direct extension $q^* \geq^* q$ forcing $\alpha = \kappa(\oln)$ is not a limit point of  $O(\name{D})$. Contradiction. It follows that $\oln \in Z$, and $q \force \kappa(\oln) \in O(\name{D})$. 
\end{proof}

\section{Stationary reflection and the failure of diamond}\label{Section 3}

Woodin's construction of a model of set theory satisfying $\neg\Diamond_\kappa$ on a Mahlo cardinal $\kappa$, is based on the following result.

\begin{theorem}[Woodin]\label{Theorem - Woodin Diamond}
Suppose that $\U$ is a measure sequence and $2^\kappa > \ell(\U)$, and let $G \subset \R(\U)$ be generic over $V$. If $\kappa$ remains regular in $V[G]$ then $\neg\Diamond_\kappa$ holds in $V[G]$. 
\end{theorem}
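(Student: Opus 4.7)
The plan is to assume $p_0 \in G$ forces $\name{\vec{s}} = \la \name{s}_\alpha \mid \alpha < \kappa\ra$ to be a $\Diamond_\kappa$-sequence in $V[G]$, and to exhibit $X \in V$ whose guessing set $T_X := \{\alpha < \kappa : s_\alpha = X \cap \alpha\}$ is forced to be nonstationary, contradicting diamond.

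The first step localizes the values $s_\alpha$ along the generic club $C_G$. For each $\alpha = \kappa(\olm)$ with $\olm \in \MS_G$ and $\ell(\olm) > 0$, apply the factorization $\R(\U)/p \cong \R(\olm)/p^{\leq m} \times \R(\U)/p^{>m}$ together with the $\kappa(\olm)^+$-closure of the direct extension order on the upper factor. The Prikry property applied to each of the $\kappa(\olm)$ statements ``$\can{\xi} \in \name{s}_\alpha$'' yields $s_\alpha \in V[G \cap \R(\olm)]$. Hence, working in $V$, there is a function $\olm \mapsto \name{t}_\olm$ assigning a $\R(\olm)$-name to each $\olm \in \MS$ of positive length, such that $p_0$ forces that whenever $\olm$ appears in the generic $\MS_G$, the value $s_{\kappa(\olm)}$ equals the interpretation of $\name{t}_\olm$ by the $\R(\olm)$-generic derived from $G$.

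The second step bounds the $X \in V$ that can be captured. Define $W^X = \{\olm \in \MS : \exists q \in \R(\olm),\ q \force_{\R(\olm)} \name{t}_\olm = \can X \cap \can{\kappa(\olm)}\}$. If $T_X$ is stationary in $V[G]$, then so is $T_X \cap C_G \subseteq O(W^X)$. Were $W^X \notin U_\tau$ for every $\tau < \ell(\U)$, then $\MS \setminus W^X \in \bigcap\U$, and the standard density of $\MS_G$ inside $\bigcap\U$-sets would force $O(W^X)$ bounded in $V[G]$, a contradiction; hence $W^X \in U_{\tau_X}$ for some $\tau_X < \ell(\U)$. Transferring through $j_{\tau_X}:V \to M_{\tau_X}$, and using $\cp(j_{\tau_X}) = \kappa$ (so that $j_{\tau_X}(X) \cap \kappa = X$), \L o\'s yields a condition $q' \in \R(\U\uhr\tau_X)$ in $M_{\tau_X}$ forcing $j_{\tau_X}(\olm \mapsto \name{t}_\olm)(\U\uhr\tau_X) = \can X$. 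Distinct $X$'s captured via the same $\tau$ would be forced by pairwise incompatible conditions in the $\kappa^+$-c.c.\ forcing $\R(\U\uhr\tau)$ (as computed in $M_\tau$), so there are at most $\kappa$ of them per $\tau$.

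Summing, the number of captured $X \in V$ is at most $\ell(\U)\cdot\kappa = \ell(\U) < (2^\kappa)^V$ by hypothesis, so some $X \in V$ is not captured; for this $X$, $T_X$ is nonstationary in $V[G]$, the required contradiction. The main obstacle is the middle step: translating ``$O(W^X)$ is stationary in $V[G]$'' into ``$W^X$ lies in some $U_\tau$'' requires the density behavior of $\MS_G$ in $\bigcap\U$-sets, and the subsequent per-$\tau$ bound uses the $\kappa^+$-c.c.\ of $\R(\U\uhr\tau)$ applied inside $M_\tau$, which in turn relies on $|V_\kappa|^{M_\tau} \leq \kappa$.
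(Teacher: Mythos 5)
Your proposal is correct and follows essentially the same route as the paper's proof: reduce each $\name{s}_{\kappa(\olm)}$ to an $\R(\olm)$-name via the factorization and the closure of the upper part's direct extension order, transfer through the ultrapower embeddings to view the guessed value at $\kappa$ as an $\R(\U\uhr\tau)$-name, and count the capturable $X$'s using the $\kappa^+$-c.c.\ against $2^\kappa>\ell(\U)$. The only difference is presentational — you argue semantically in $V[G]$ with the sets $W^X$ where the paper constructs a single direct extension $q$ (via a diagonal intersection) forcing the guessing set of the uncaptured $X$ to avoid a tail of $C_G$ — and your minor imprecisions (uniformity of $\olm\mapsto\name{t}_\olm$ requires the diagonal intersection; it is $O(W^X\cap\MS_G)$, not $O(W^X)$, that is bounded) do not affect the argument.
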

We include Woodin's elegant argument for completeness. 
\begin{proof}
Let $\name{\vec{s}} = \la \name{s_\alpha} \mid \alpha < \kappa\ra$ be a $\R(\U)$-name, and suppose that $p = p_0 \fr \la \U,A^p\ra$ is a condition forcing $\name{s_\alpha} \subset \alpha$ for all $\alpha < \kappa$. 
For each $\oln \in A^p$, the forcing $\R(\U)/(p \fr \la \oln\ra)$ factors into $\R(\oln)/(p_0 \fr \la \oln\ra) \times R(\U)/(\U,A^p \setminus V_{\kappa(\oln)+1})$, where the direct extension order of the second component is $(2^{\kappa(\oln)})^+$-closed. It follows that the condition $\la \U,A^p \setminus V_{\kappa(\oln)+1})$ in second component has a direct extension $\la \U, A_{\oln}\ra$ which decides the value of the set $\name{s_{\kappa(\oln)}}$, hence reducing the $\R(\U)$ name $\name{s_{\kappa(\oln)}}$ to a $\R(\oln)$-name $\name{s'_{\oln}}$. 
Now, let $A^* = \Delta_{\oln \in A^p}A_{\oln}$. Consider the condition $p^* = p_0 \fr \la \U,A^*\ra$ 
and the function $\vec{s'} : A^* \to V$, defined by $\vec{s'}(\oln) = \name{s'_{\oln}}$.
It follows that $p^* \fr \la \oln\ra \force \name{s_{\kappa(\oln)}} = \vec{s'}(\oln)\}$ for each $\oln \in A^*$. 
Consequently, for every $\tau < \ell(\U)$, $j(p^*) \fr \la \U\uhr \tau\ra \force j(\vec{s})_{\kappa} = j(\vec{s'})(\U\uhr \tau)$, where the last is a $\R(\U\uhr\tau)$ name of a subset of $\kappa$. Since $\R(\U\uhr\tau)$ satisfies $\kappa^+$.c.c and $2^\kappa > \ell(\U)$, there must exist $X \subset \kappa$ such that $j(p^*) \fr \la \U\uhr \tau\ra \force j(\vec{s'})(\U\uhr \tau) \neq \can{X}$ for every $\tau < \ell(\U)$. It follows that $p^*$ has a direct extension $q = p_0 \fr \la \U,B\ra$ such that
$q \fr \la \oln\ra \force \name{s}_{\kappa(\oln)} \neq \can{X}\cap \kappa(\oln)$ for every $\oln \in B$. 
Hence $q$ forces $\name{\vec{s}}$  is not a  $\Diamond_\kappa$  sequence.  
\end{proof}

Woodin's argument essentially implies that every large cardinal property of $\kappa$, obtainable in a $\R(\U)$ generic extension from assumptions concerning the length of $\U$, is consistent with $\neg \Diamond_\kappa$. 
Therefore, from Theorem \ref{Theorem - Regularity} and Corollary \ref{Corollary - Mahloness}, we can infer $\neg\Diamond_\kappa$ is consistent when $\kappa$ is inaccessible, or $\tau$-Mahlo for some $\tau \leq \kappa^+$. Indeed, it is well-known that under certain hypermeasurability large cardinal assumptions 
we can construct a model $V$ in which $2^\kappa = \kappa^{++}$ and $\kappa$ carries a measure sequence $\U$ of length $\kappa^+$, or $(\kappa^+)^{1+\tau}$ for $\tau \leq \kappa^+$.\footnote{I.e., using Mitchell's version of Radin forcing \cite{Mitchell - CUF}, the assumption of a measurable cardinal with $o(\kappa) = \kappa^{++} + (\kappa^+)^{1+\tau}$ suffices.}
By extending the analysis of the stationary subsets of $\kappa$ in $\R(\U)$ generic extensions we prove the following result. 

\begin{theorem}\label{Theorem - SR} 
Let $\U$ be a measure sequence . If $\cf(\ell(\U)) \geq \kappa^{++}$ then $\kappa$ satisfies the strong simultaneous reflection principle in every $\R(\U)$ generic extension. 
\end{theorem}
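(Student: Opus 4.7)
The plan is to imitate Woodin's reduction from the proof of Theorem \ref{Theorem - Woodin Diamond} in order to pass from $\R(\U)$-names of stationary sets to local $\R(\oln)$-names, and then to use the elementary embedding $j: V \to M$ witnessing that $\U$ is a measure sequence, together with $\cf(\ell(\U)) \geq \kappa^{++}$, to locate a $\U$-positive set of measure sequences $\oln$ at which all the relevant names are forced stationary. Proposition \ref{Prop - Stationary 1} then converts such a set into actual reflection points in the generic club.

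Let $p = p_0 \fr \la \U, A\ra$ force that $\la \name{S_i} \mid i < \kappa\ra$ is a sequence of stationary subsets of $\kappa$. For each $\oln \in A$, the factorization $\R(\U)/(p \fr \la \oln \ra) \cong \R(\oln)/(p_0 \fr \la \oln\ra) \times \R(\U)/\la \U, A \setminus V_{\kappa(\oln)+1}\ra$ and the $\kappa(\oln)^+$-closure of the upper direct-extension order allow me to decide, by a single direct extension of the upper factor, the $\kappa(\oln)$ many questions ``$\kappa(\oln) \in \name{S_i}$'' for $i < \kappa(\oln)$, thereby reducing each $\name{S_i} \cap \kappa(\oln)$ to an $\R(\oln)$-name $\name{S'^\oln_i}$. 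Diagonalizing over $\oln$ yields a direct extension $p^* = p_0 \fr \la \U, A^*\ra \geq^* p$ and a function $\vec{S'} : A^* \to V$ with $p^* \fr \la \oln \ra \force \name{S_i} \cap \kappa(\oln) = [\name{S'^\oln_i}]^{G \uhr \R(\oln)}$ for every $\oln \in A^*$ and $i < \kappa(\oln)$. By Prikry plus closure, the upper factor adds no subsets of $\kappa(\oln)$, so stationarity of $\name{S_i} \cap \kappa(\oln)$ in $V[G]$ is determined inside $V[G \uhr \R(\oln)]$. I then define the good set $\mathcal{G} = \{\oln \in A^* \mid p_0 \fr \la \oln \ra \force_{\R(\oln)} \name{S'^\oln_i} \text{ is stationary in } \kappa(\oln) \text{ for every } i < \kappa(\oln)\}$. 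If $\mathcal{G}$ is $\U$-positive, then Proposition \ref{Prop - Stationary 1} gives that $O(\mathcal{G})$ is stationary in $V[G]$, and any $\delta \in O(\mathcal{G}) \cap C_G$ is a simultaneous reflection point for $\la \name{S_i} \mid i < \delta\ra$.

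The main obstacle is to show that $\mathcal{G}$ is $\U$-positive. For each $i < \kappa$, let $\mathcal{N}_i = \{\oln \in A^* \mid \exists q \geq p_0 \fr \la \oln \ra$ in $\R(\oln)$ with $q \force \name{S'^\oln_i}$ non-stationary$\}$ and $T_i = \{\tau < \ell(\U) \mid \mathcal{N}_i \in U_\tau\}$. If I can prove that each $T_i$ is bounded in $\ell(\U)$, then $\cf(\ell(\U)) \geq \kappa^{++} > \kappa$ makes $\tau^* = \sup_{i<\kappa}\sup(T_i) < \ell(\U)$, and for $\tau \geq \tau^*$, the $\kappa$-completeness of $U_\tau$ gives $\mathcal{G} \in U_\tau$, which is more than $\U$-positive. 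Via $j$, $\mathcal{N}_i \in U_\tau$ translates, after using the factorization of $j(\R(\U))$ past $j(p^*) \fr \la \U\uhr\tau\ra$ and the $\kappa^+$-closure in $M$ of its upper factor above $\kappa$, to the existence of a $j(\R(\U))$-extension of $j(p^*) \fr \la \U\uhr\tau\ra$ forcing $j(\name{S_i}) \cap \kappa$ non-stationary in $\kappa$ in $M[j(G)]$. The delicate point is that non-reflection at $\kappa$ alone does not contradict the elementarity-given stationarity of $j(\name{S_i})$ in $j(\kappa)$, so the argument must splice putative cofinally many local non-stationarity witnesses (names for clubs disjoint from $j(\name{S_i}) \cap \kappa$ at various heights) into a single $j(\R(\U))$-name for a club in $j(\kappa)$ disjoint from $j(\name{S_i})$ on a tail of the generic club. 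I expect this splicing step, and the subsequent aggregation over $i < \kappa$, to be precisely where the two-tier hypothesis $\cf(\ell(\U)) \geq \kappa^{++}$ is essential: one dimension for the $\kappa$ names, one for the cofinal indexing by $\tau$.
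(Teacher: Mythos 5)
Your reduction of $\name{S_i}\cap\kappa(\oln)$ to $\R(\oln)$-names and your endgame (apply Proposition \ref{Prop - Stationary 1} to a $\U$-positive set of good points, then take a closure point for simultaneity) do match the outer architecture of the paper's proof. But the entire content of the theorem sits in the step you defer: showing that the good set is $\U$-positive, equivalently that each $T_i$ is bounded. You leave this explicitly open (``I expect this splicing step\dots''), and the route you sketch for it does not work. First, the statement you aim for is too strong: there is no reason for $\mathcal{G}$ to lie in a \emph{tail} of the measures $U_\tau$. For instance, if $S=O(Z)$ with $Z\in U_\rho$ exactly for $\rho$ in some unbounded, non-closed $E\subset\ell(\U)$, then $S$ is stationary in $V[G]$, but the $\oln$ at which $S$ can be expected to reflect are roughly those for which $Z\cap V_{\kappa(\oln)}$ is $\oln$-positive, and that set belongs to $U_\tau$ only for $\tau$ a limit point of $E$ --- a club in $\ell(\U)$, not a tail. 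Accordingly the paper only ever establishes $\U$-positivity of the set of reflection points, and only for one carefully chosen condition. Second, and more seriously, the splicing you propose (gluing local non-stationarity witnesses into a $j(\R(\U))$-name for a club in $j(\kappa)$ disjoint from $j(\name{S_i})$) founders on exactly the phenomenon you name: $S$ may concentrate on the generic club $C_G$ itself, so that $S\cap\delta$ is non-stationary for every relevant $\delta\in C_G$ while $S$ stays stationary in $\kappa$; no disjoint club is produced, and non-reflection below $\kappa$ yields no contradiction with elementarity.

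The paper fills this gap with machinery absent from your sketch. Working in $V[G]$, it applies Fodor's lemma twice to the points of $S$ \emph{on} $C_G$ to extract a single lower part $\e$ and a diagonal measure function $b^*$ such that $Z_{\e}=\{\olm \mid \exists A.\ \e\fr\la\olm,b^*(\olm)\ra\fr\la\U,A\ra\force\kappa(\olm)\in\name{S}\}$ is $\U$-positive --- the positivity is forced by the \emph{converse} direction, Proposition \ref{proposition - Stationary 2}, which says a non-positive set is killed by a club --- and a fat-tree argument (Sub-Claim 1.1) upgrades this to positivity of $Z_{\e}\dhr\vec{\eta}$ for all finite $\vec{\eta}$ from a measure-one set. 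Reflection at $\kappa(\oln)$ is then a density argument inside $\R(\oln)$: every name for a club of $\kappa(\oln)$ must meet $O(Z_{\e}\dhr\vec{\eta})$, by the fat-tree analysis from the proof of Proposition \ref{Prop - Stationary 1}. Finally, $\cf(\ell(\U))\geq\kappa^{++}$ is used not to separate ``names from indexing'' but to find a limit point $\tau$ of cofinality $\kappa^+$ of a club in $\ell(\U)$, so that $U_\tau$-many $\oln$ satisfy $\cf(\ell(\oln))=\kappa(\oln)^+$, which is what makes the club-hitting argument run at $\oln$. Without a substitute for this apparatus, your proof does not go through.
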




The following family of functions play a central role in the analysis of stationary subsets of $\kappa$ in a Radin generic extension.

\begin{definition}\label{Definition - MF}
A \textbf{measure function} is a function $b : \MS \to V_\kappa$ satisfying $b(\olm) \in \cap\olm$ for every $\olm \in \MS$. We denote the set of measure functions by $\MF$. 
\end{definition}

\textbf{Proof.}(Theorem \ref{Theorem - SR}) 
First, if $\U$ contains a repeat point then $\kappa$ is measurable in any $\R(\U)$ generic extension, and in particular satisfies the strong simultaneous reflection property. Therefore, let us assume from now on that $\U$ does not contain a repeat point.
We commence with showing 
that every stationary subset $S$ of $\kappa$ in $V[G]$ reflects.
Let $\name{S}$ be a $\R(\U)$ name of $S$. 
Working in $V$, for each $\vec{d} \in \R_{<\kappa}$ consider the condition $p_{\vec{d}} = \vec{d} \fr \la \U,\MS \setminus \max(\vec{d})\ra$. For each $\olm \in \MS \setminus \max(\vec{d})$, the condition $p_{\vec{d}} \fr \la \olm\ra$ has a direct extension of the form $q_{\vec{d}}(\olm) = e_{\vec{d}}(\olm) \fr \la \olm, b_{\vec{d}}(\olm)\ra \fr \la \U, A_{\vec{d}}(\olm)\ra$ deciding the statement $``\kappa(\olm) \in \name{S}``$. 
Therefore, for each $\vec{d}$, we obtain three functions, $e_{\vec{d}}$, $b_{\vec{d}}$, and $A_{\vec{d}}$. 
 Let $A_{\vec{d}} = \Delta_{\olm \in \MS} A_{\vec{d}}(\olm)$
and $A = \Delta_{\vec{d}} A_{\vec{d}}$. Define $b^* \in \MF$ by
$b^*(\olm) = \Delta_{\vec{d}}V_{\kappa(\olm)} b_{\vec{d}}(\olm) = \{ \oln \in \MS\cap V_{\kappa(\olm)} \mid \forall \vec{d} \in V_{\kappa(\oln)}. \oln \in b_{\vec{d}}(\olm)\}$. 
While independent of $\vec{d} \in \R_{<\kappa}$, $A,b^*$ capture the information given by the sets $A_{\vec{d}}$ and measure functions $b_{\vec{d}}$.
Next, for an element $\e  \in \R_{<\kappa}$ 
define $Z_{\e} = \{ \olm \in \MS \mid \exists A \in \bigcap \U. \quad \e \fr  \la \olm, b^*(\olm) \ra \fr \la \U,A\ra \force \kappa(\olm) \in \name{S}\}$.
We say that $\e$ is a stationary witness of $\name{S}$ if there exists $B_{\e} \in \bigcap \U$ such that for every $\vec{\eta} \in \MS^{<\omega}$, $\vec{\eta} \subset B_{\e}$ the set 
$Z_{\e} \dhr \vec{\eta} = \{ \olm \in Z_{\e} \mid \vec{\eta} \subset b^*(\olm) \text{ and }  b^*(\olm) \cap V_{\kappa(\olm')} \in \cap \olm' \text{ for every } \olm' \in \vec{\eta}\}$ is $\U$-positive.\\

\textbf{Claim 1:}
Suppose $p = p_0 \fr \la \U,A_p\ra \in \R(\U)$ forces that $\name{S}$ is a stationary subset of $\kappa$. Then $p$ has an extension $q = \e \fr \la \U, A'\ra$ such that $\e$ is a stationary witness of $\name{S}$ and $A' \subset B_{\e}$.\\
By replacing $p$ with a direct extension if needed,  we may assume that $A_p \subset A$. 
It is sufficient to show that for every generic filter $G \subset \R(\U)$ which contains $p$, there exists $q =  \e \fr \la \U, A_{\e}\ra \in G$ such that $\e$ is a stationary witness of $\name{S}$.
To this end, work in $V[G]$ and for each $\alpha \in S \cap C_G$ set $\olm_\alpha \in \MS_G$ to be the unique measure sequence in $\MS_G$ satisfying $\alpha = \kappa(\olm_\alpha)$. 
For each $\alpha \in S$, $b^*(\olm_\alpha) \in \bigcap \olm_\alpha$. Therefore there exists a maximal ordinal $\beta_\alpha < \alpha$ greater or equal to $\max(\kappa(p_0))$
such that $\MS_G \setminus V_{\beta_\alpha}  \subset b(\olm_\alpha)$. 
By F\`{o}dor's Lemma, there exist $\beta^*  \in C_G$ and $S' \subset S$ stationary, so that $\beta^* = \beta_\alpha$ for each $\alpha \in S'$. 
Let $G_{<\kappa}$ denote the set of bottom parts of generic conditions.\footnote{Namely, the set of all $\vec{d} \in \R_\kappa$ such that $\vec{d} \fr \la \U,\MS\ra \in G$.} 
By a standard density argument, for each $\alpha \in S'$ there exists $\vec{d}^\alpha =\la d^\alpha_i \mid i < k_\alpha\ra$ extending $p_0$, with $\kappa(d^\alpha_{k_\alpha-1}) = \beta^*$, such that $e_{\vec{d}^\alpha}(\olm_\alpha) \in G_{<\kappa}$. By pressing down again, we can find $\vec{d^*},\e \in V_{\beta^*+1}$, and a stationary set $S^* \subset S'$ such that $\vec{d^*} = \vec{d}^\alpha$ and $\e = e_{\vec{d}^*}(\olm_\alpha)$ for each $\alpha \in S^*$. 
It follows that for each $\alpha \in S^*$, the condition $\e \fr \la \olm_\alpha,b^*(\olm_\alpha) \ra \fr \la \U,A_p\ra$ belongs to $G$ and forces that $``\alpha \in \name{S}``$. 
This implies $\{ \olm_\alpha \mid \alpha \in S^*\} \subset Z_{\e}$, which in turn implies that $Z_{\e}$ is $\U$-positive.  To see the last, note that otherwise, 
Proposition \ref{proposition - Stationary 2} implies there is a closed unbounded set $C \subset \kappa$ in $V[G]$ which is disjoint from $O(Z_{\e}) = \{ \kappa(\olm) \mid \olm \in Z_{\e}\}$. But this is an absurd as $S^* \cap C \neq \emptyset$. \\
Our next goal is to construct a set $B_{\e}$ as described in the definition of $\e$ being a stationary witness of $\name{S}$. For this we consider sets of the form $Z_{\e}\dhr \vec{\eta}$ for various $\vec{\eta} \in A^{<\omega} \subset \MS^{<\omega}$.
Let us say that $\vec{\eta}$ is $0$-positive if $Z_{\e}\dhr \vec{\eta}$ is $\U$-positive, and for an integer $n \geq 0$, say $\vec{\eta}$ is $(n+1)$-positive if the set $B^n_{\vec{\eta}}= \{ \oln \in \MS \mid \vec{\eta} \fr \la \oln\ra \text{ is } n\text{-positive}\}$ belongs to $\bigcap \U$. 
Finally, $\vec{\eta}$ is $\omega$-positive if it is $n$-positive for each $n < \omega$ and $B^\omega_{\vec{\eta}} = \bigcap_{n < \omega} B^n_{\vec{\eta}}$.
Let $B^{\omega} = \{ \oln \in \MS \mid \la \oln \ra \text{ is } \omega\text{-positive}\}$. \\
\textbf{Sub-Claim 1.1:} 
$B^\omega \in \bigcap \U$. 
It is sufficient to show that for every $n  <\omega$, the set
$B^n =  \{ \oln \in \MS \mid \la \oln \ra \text{ is } n\text{-positive}\}$ belongs to $\bigcap\U$. 
Suppose otherwise.  Then there are $\alpha_0 < \ell(\U)$ and $A_0 \in U_{\alpha_0}$ such that $\la \oln_0 \ra$ is not $n$-positive for each $\oln_0 \in A_0$. That is, for each $\oln_0 \in A_0$ there are $U_{\alpha_{\la \oln_0 \ra}}$ and $A_{\la \oln_0 \ra} \in U_{\la \oln_0\ra}$ such that for each $\oln_1 \in A_{\la \oln_0\ra}$, $\la \oln_0,\oln_1\ra$ is not $(n-1)$-positive. By continuing to unravel the statement in this manner, we can construct a $\U$-fat tree $T \subset \MS^{\leq n}$ (see Definition \ref{Definition - fat trees}) such that for every maximal branch $\vec{\eta} = \la \oln_1,\dots, \oln_{n}\ra$ of $T$, $Z_{\e} \dhr {\vec{\eta}}$ is not $\U$-positive. Since $T$ is $\U$-fat, a standard density argument shows there exists a condition $e^* \fr \vec{d} \fr \la \U,A'\ra$ in $G$ such that $\vec{d} =\la d_1,\dots,d_n\ra$, where $\vec{\eta} = \la \olm(d_1),\dots,\olm(d_n)\ra$ is a maximal branch of $T$. Now by Proposition \ref{proposition - Stationary 2}, there exists a closed unbounded set $C \subset \kappa$ in $V[G]$ which is disjoint from $O(Z_{\e} \dhr {\vec{\eta}})$. 
To get a contradiction, we take $\alpha \in S^* \cap C$ which is above $\max(\kappa(\oln_n))$. By the definition of $S^*$, we have that $e^* \fr \la \olm_\alpha,b^*(\olm_\alpha)\ra \fr \la \U,A^*\ra$ belongs to $G$ and therefore forces $``\can{\alpha} = \can{ \kappa(\olm_\alpha)} \in \name{S}``$. 
Also, since the ordinals in $\vec{\eta}$ are all above $\max(\e) = \beta^*$, $\vec{\eta} \subset b^*(\olm_\alpha)$. But this means $\alpha \in O(Z_{\e}\dhr {\vec{\eta}}) \cap C$. Contradiction.  \qed{(Sub-Claim 1.1)}\\
${}$
We can now define $B_{\e}$. First, let  $\Delta^1 = B^\omega$ and for each $n < \omega$, let $\Delta^{n+1} = \Delta^n \cap \{ \olm \in \Delta^n \mid \forall \vec{\eta} =  \la \olm_1,\dots,\olm_n \ra  \subset \Delta^n \cap V_{\kappa(\olm)}. \olm \in B^{\omega}_{\vec{\eta}}\}$. 
We then set $B_{\e}$ to be $A\cap (\bigcap_n \Delta^n)$. It is routine to verify $B_{\e} \in \bigcap\U$ and that for every increasing sequence $\vec{\eta} = \la \olm_1,\dots,\olm_m\ra \subset B_{\e}$, $Z_{\e}\dhr \vec{\eta}$ is $\U$-positive. 
Finally, let $A' = B_{\e} \cap A_p$.  
Then $\e$ is stationary witness of $\name{S}$ and $q= \e \fr \la \U, A'\ra$ is an extension of $p$.
\qed{(Claim 1)}\\

Let us show how a stationary witness of $\name{S}$, $\e \in \R_{<\kappa}$, can be used to find a reflection point of $\name{S}$.
Let $B_{\e} \in \bigcap \U$ as in the definition of a stationary witness.  For each $\vec{\eta} \in B_{\e}^{<\omega}$ define $E_{\e}(\vec{\eta}) \subset \ell(\U)$ to be the set of accumulation points of all $\tau < \ell(\U)$ such that $Z_{\e}\dhr \vec{\eta} \in U_\tau$. 
Since each $E_{\e}(\vec{\eta})$ is closed unbounded in $\ell(\U)$ and $\cf(\ell(\U)) \geq \kappa^{++}$,  $E_{\e} = \bigcap\{ E_{\e}(\vec{\eta}) \mid \vec{\eta} \subset A_{\e}\}$ is also closed unbounded and there exists $\tau \in \kappa^{++} \cap \Cof(\kappa^+)$ which is a limit point of $E_{\e}$. 
It follows that there exists $X \in U_\tau$ such that every $\oln \in X$ satisfies the following conditions: 
\begin{enumerate}
\item $\cf(\ell(\oln)) = \kappa(\oln)^+$,
\item for every $\vec{\eta} \subset A_{\e} \cap V_{\kappa(\oln)}$, $Z_{\e}\dhr \vec{\eta}$ is $\oln$-positive.
\end{enumerate}

\textbf{Claim 2:}
For every $\oln \in X$, the condition $\e \fr \la \oln, A_{\e} \cap V_{\kappa(\oln)}\ra \fr \la \U,A_{\e}\ra$ forces $\name{S} \cap \kappa(\oln)$ is stationary in $\kappa(\oln)$. \\
Let us denote the condition $\e \fr \la \oln, A_{\e} \cap V_{\kappa(\oln)}\ra \fr \la \U,A_{\e}\ra$ by $t$. Suppose that $\sigma$ is a name for a subset of $\kappa(\oln)$ and $q$ is an extension of $t$, forcing that $\sigma$ is a closed unbounded subset of $\kappa(\oln)$.
We separate $q$ into parts and write $q = q_0 \fr q_1 \fr \la \oln, b\ra \fr q_2 \fr \la \U,A_q\ra$, where $q_0 \geq \e$, $q_1 \fr \la \oln,b\ra \geq \la \oln,A_{\e} \cap V_{\kappa(\oln)}\ra$, and $q_2 \fr \la \U,A_q\ra \geq \la \U,A_{\e}\ra$.
By further extending $q_2 \fr \la \U,A_q\ra$ if necessary, we may assume  $\sigma$ is a $\R(\oln)$ name of a closed unbounded subset of $\kappa(\oln)$.
The rest of the proof follows the argument of the proof of Proposition \ref{Prop - Stationary 1}, applied to the forcing $\R(\oln)$. For each $i < \kappa(\oln)$ and $\vec{d} \in R_{<\kappa(\oln)}$ we define a $\oln$-fat tree
$T_{i,\vec{d}}$ and a $\U$ set $A_{i,\vec{d}}$, associated with the set $D_i$ of all $\R(\oln)$ conditions $r = r_0 \fr \la \oln,a_r\ra$ which force the $i$-th element of $\sigma$ to be bounded in $\kappa_0(r) = \max(r_0)$. 
We then define $ \Gamma = \{\olm \in \MS\cap V_{\kappa(\oln)} \mid \forall i,\vec{d} \in V_\kappa(\olm). \ T_{i,\vec{d}} \cap V_{\kappa(\olm)} \text{ is a fat-}\olm \text{ tree }\} $.
Since $\cf(\ell(\oln)) \geq \kappa(\oln)^+$, there exists $\alpha^* < \ell(\oln)$ so that  $\Gamma \in \bigcap_{i \geq \alpha^*}\oln(i)$. 
Let $\vec{\eta} \in \MS^{<\omega}$ be an increasing enumeration of the measure sequences in $q_1$. \footnote{Namely, if $q_1 = \la d_1,\dots,d_k\ra$ then $\vec{\eta} = \la \olm(d_1),\dots,\olm(d_k)\ra$.} Since $q$ extends $t$,  $\vec{\eta} \subset A_{\e} \cap V_{\kappa(\oln)}$, and by our assumption $\oln \in X$,  $Z_{\e}\dhr \vec{\eta}$ must be a $\oln$-positive. 
Hence, there must exist $\olm \in (Z_{\e}\dhr \vec{\eta}) \cap b$
such that $T_{i,\vec{d}} \cap V_{\kappa(\olm)}$ is $\olm$-fat for each $i,\vec{d} \in V_{\kappa(\olm)}$. 
By Claim* of Proposition \ref{Prop - Stationary 1}, 
$q \fr \la \olm\ra$ forces $\kappa(\olm) \in \sigma$. Furthermore, the fact $\olm \in Z_{\e}\dhr \vec{\eta}$ implies $q \fr \la \olm\ra$ is compatible with the condition $\e \fr \la \olm ,b^*(\olm)\ra \fr \la \U,A\ra$, which forces $\kappa(\olm) \in \name{S}$. Hence $q$ has an extension which forces $\sigma \cap \name{S} \neq \emptyset$. \qed{(Claim 2)}\\

Claims 1,2 imply that if $p = p_0 \fr \la \U,A_p\ra$ is a condition which forces $\name{S}$ is a stationary subset of $\kappa$, then $p$ has an extension 
of the form $\e \fr \la \oln, B_{\e} \cap V_{\kappa(\oln)}\ra \fr \la \U,B_{\e}\ra$ forcing that  $\name{S} \cap \kappa(\oln)$ is stationary. It follows that in a $\R(\U)$ generic extension $V[G]$, every stationary subset of $\kappa$ reflects.\\

For the final part of the proof we extend the argument to obtain the strong simultaneous reflection property at $\kappa$.
Suppose $\la \name{S}_i \mid i < \kappa\ra$ is a sequence of names of subsets of $\kappa$ and $p = p_0 \fr \la \U,A_p\ra$ is a condition of $\R(\U)$ forcing that each $\name{S_i}$ is a stationary in $\kappa$. 
For each $i < \kappa$ let $W(\name{S_i})$ denote the set of all $\e \in \R_{<\kappa}$ which are stationary witnesses of $\name{S_i}$.
As shown above, for each $\e \in W(\name{S_i})$
there exists $B^i_{\e} \in \bigcap\U$ and a closed unbounded set $E^i_{\e} \subset \ell(\U)$, such that for every limit point $\tau \in E^i_{\e}$ of cofinality $\kappa^+$, 
there exists a set $X \in U_\tau$ which consists of $\oln$ for which the condition
$\e \fr \la \oln, B_{\e} \cap V_{\kappa(\oln)}\ra \fr \la \U,B_{\e}\ra$ forces 
$\name{S_i}$ reflects at $\kappa(\oln)$. 
For each $i < \kappa$, define $A^i = \Delta_{\e \in W(\name{S_i})} B^i_{\e} = \{ \oln \in \MS \mid \forall \e \in W(\name{S_i}) \cap V_{\kappa(\oln)}. \medskip \oln \in B^i_{\e}\}$ and 
$E^i = \bigcap_{\e \in W(\name{S_i})} E^i_{\e}$. 
Finally, define   $A^* = \Delta_{i < \kappa} A^i$ and $E^* = \bigcap_{i < \kappa} E^i$. 
We conclude that there exists a set $X \subset \MS$ which belongs to each $U_\tau$ where $\tau$ is a limit point of $E^*$ of cofinality $\kappa^+$, 
such that for every $\oln \in X$, $i < \kappa(\oln)$, and $\e \in W(\name{S_i}) \cap V_{\kappa(\oln)}$, the condition
$\e \fr \la \oln, A^* \cap V_{\kappa(\oln)}\ra \fr \la \U,A^*\ra$ forces $\name{S} \cap \kappa(\oln)$ is stationary in $\kappa(\oln)$.
Note that $X$ is $\U$-positive. Let $G \subset \R(\U)$ be a generic filter containing $p ^* = p_0 \fr \la \U,A^*\ra$. By Proposition \ref{Prop - Stationary 1}, the set $O(X) = \{ \kappa(\oln) \mid \oln \in X\}$ is a stationary subset of $\kappa$ in $V[G]$. 
For each $i < \kappa$, let $S_i = (\name{S_i})_G$. By Claim 1 above, $W(\name{S_i}) \cap G_{<\kappa} \neq \emptyset$. Let $\e_i$ be the lexicographic minimal sequence in $ W(\name{S_i}) \cap G_{<\kappa}$, and $\kappa(\e_i)$ denote its maximal critical point. In $V[G]$, define $f : \kappa \to \kappa$ in $V[G]$ by $f(i) = \kappa(\e_i) + 1$. 
Since $O(X)$ is stationary, there exists $\oln \in \MS_G \cap X$ such that $\alpha = \kappa(\oln)$ is a closure point of $f$. It follows that for each $i < \alpha$, 
$\e \fr \la \oln, A^* \cap V_{\kappa(\oln)}\ra \fr \la \U,A^*\ra$ belongs to $G$, hence $S_i \cap \alpha$ is a stationary subset of $\alpha$. \qed{(Theorem  \ref{Theorem - SR})}



\section{Weak compactness and Radin forcing}\label{Section 4}
It is natural to ask whether the Radin forcing machinery can be extended to establish the consistency of $\neg\Diamond_\kappa$ at a weakly compact cardinal $\kappa$. 
One necessary step required towards giving an affirmative answer to this question, is to find a reasonably weak assumption of a measure sequence $\U$ which implies $\kappa$ is weakly compact in a $\R(\U)$ generic extension. The section will be mostly devoted to 
providing a property of $\U$, called the weak repeat property ($\WRP$),  which characterizes weak compactness of $\kappa$ in a $\R(\U)$ generic extension. 
In the last part of the section, we return to the violation of the diamond question and discuss some natural obstructions raised by the weak compactness characterization.

\begin{definition}
${}$
\begin{enumerate}
\item We say that a filter $W \subset \power(\MS)$ 
measures a set $X \subset \MS$ if $X \in W$ or $\MS \setminus X \in W$. If $F \subset \power(\MS)$ is a family of sets, then we say $W$ measures $F$ if it measures each $X \in F$.
For every $b \in \MF$ and $\olm \in \MS$ let $X_{b,\olm} =\{ \oln \in \MS \mid \olm \in b(\oln)\}$. 
We say that a filter $W$ measures a function $b \in \MF$ if it measures the family $\F_b = \{X_{b,\olm} \mid \olm \in \MS\}$. Whenever $W$ measures $b \in \MF$,  we define $[b]_W = \{ \olm \in \MS \mid X_{b,\olm} \in W\} \subset \MS$. 

\item 
Let $b \in \MF$ and $W \subset \power(\MS)$ be a filter. We say that $W$ is a \textbf{repeat filter} of $b$ with respect to $\U$ if it satisfies the following conditions.\\
\textbf{a.} $W$ is a $\kappa$-complete filter extending the co-bounded filter on $\MS$,\footnote{Namely, for every $\alpha < \kappa$, the set $\{ \olm \in \MS \mid \kappa(\olm) > \alpha\} \in W$.}\\
\textbf{b.} $W \subset \bigcup \U$,\\
\textbf{c.} $W$ measures $b$,\\
\textbf{d.} $[b]_W \in \cap \U$.

\item We say that $\U$ satisfies the \textbf{Weak Repeat Property} ($\WRP$) if every $b \in \MF$ has a repeat filter with respect to $\U$.  
\end{enumerate}
\end{definition}

Let us say $\U$ satisfies the repeat property ($\RP$) if it contains a repeat point measure. 

\begin{lemma}\label{Lemma-RPvsWRP}
$\RP$ implies $\WRP$. Moreover, if $U_\rho$ is a repeat point of $\U$ then 
$\{ \olm \in \MS \mid \olm \text{ satisfies } \WRP \}$ belongs to $U_\rho$ and there exists $\tau < \rho$ such that $\U\uhr \tau$ satisfies $\WRP$.
\end{lemma}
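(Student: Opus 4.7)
The plan is to use the repeat measure $U_\rho$ itself as a universal repeat filter, and to derive the two ``moreover'' conclusions from the reflection characterization $U_\rho = \{X : \U\uhr\rho \in j(X)\}$, where $j : V \to M$ is the embedding witnessing $\U$.

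For the first assertion, given any $b \in \MF$ I would verify that $W := U_\rho$ is a repeat filter. Conditions (a)--(c) are immediate, since $U_\rho$ is a $\kappa$-complete normal ultrafilter on $V_\kappa$ and $U_\rho \in \U$. For (d), the reflection definition gives $[b]_{U_\rho} = j(b)(\U\uhr\rho) \cap \MS$, and elementarity applied to the measure-function condition $b(\olm) \in \cap \olm$ yields $j(b)(\U\uhr\rho) \in \cap(\U\uhr\rho)$; thus $[b]_{U_\rho} \in U_\alpha$ for every $\alpha < \rho$. For $\alpha \geq \rho$ the repeat property gives $U_\alpha \subset \bigcup_{i<\rho}U_i$, so if $\MS \setminus [b]_{U_\rho}$ were in $U_\alpha$ it would also lie in some $U_{i_0}$ with $i_0 < \rho$, contradicting $[b]_{U_\rho} \in U_{i_0}$. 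Hence $[b]_{U_\rho} \in \cap \U$.

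For the first ``moreover'' claim, by the reflection characterization of $U_\rho$ the inclusion $\{\olm \in \MS : \olm \text{ satisfies } \WRP\} \in U_\rho$ is equivalent to the assertion that $\U\uhr\rho$ satisfies $\WRP$ inside $M$. Given $b \in M \cap \MF$, set $A := j(b)(\U\uhr\rho) \cap \MS$, an element of $M$ which, computed in $V$, equals $[b]_{U_\rho}$. Working in $M$, let $W_b$ be the $\kappa$-complete filter generated by $\{X_{b,\olm} : \olm \in A\} \cup \{\MS \setminus X_{b,\olm} : \olm \notin A\}$ together with the cobounded filter on $\MS$. Every generator lies in $U_\rho$ by the definition of $A$ (and uniformity of $U_\rho$), so $\kappa$-completeness of $U_\rho$ yields $W_b \subset U_\rho \subset \bigcup(\U\uhr\rho)$, and in particular $W_b$ is proper. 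By construction $W_b$ measures $b$ and $[b]_{W_b} = A \in \cap(\U\uhr\rho)$, so $W_b$ witnesses $\WRP$ for $b$ inside $M$.

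For the second ``moreover'' claim, the set from the first claim lies in $U_\rho \subset \bigcup_{i<\rho}U_i$, hence in some $U_\tau$ with $\tau < \rho$; applying the reflection definition to $U_\tau$ gives that $\U\uhr\tau$ satisfies $\WRP$ in $M$. Since $M$ is $\kappa$-closed (a property implicit in the measure-sequence setup), every $b \in V \cap \MF$ lies in $M$, and the filter $W \in M$ produced for $b$ inside $M$ is again a repeat filter from the viewpoint of $V$ by absoluteness of the four defining clauses. The main obstacle is the previous paragraph: although $U_\rho$ itself typically does not belong to $M$, a repeat filter inside $M$ must still be produced, and the trick is to replace $U_\rho$ by the $\kappa$-complete filter generated from the internal set $A$---the repeat property of $\rho$ being precisely what keeps this smaller filter inside $\bigcup(\U\uhr\rho)$.
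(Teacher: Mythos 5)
Your proof is correct, and the endpoints agree with the paper's: both take $W=U_\rho$ for the first assertion (your justification of clause (d) via $[b]_{U_\rho}=j(b)(\U\uhr\rho)\cap\MS$ and the repeat property is a fleshed-out version of the paper's one-line claim that $\bigcap\U\uhr\rho=\bigcap\U$), and both obtain the final $\tau<\rho$ by pushing the $U_\rho$-large set of reflecting $\olm$'s down into some $U_\tau$ and transferring between $M$ and $V$ using $\MF\subset M$. Where you genuinely diverge is the middle step, showing $M\models\U\uhr\rho$ satisfies $\WRP$. The paper fixes $b$, replaces each $Y_i\in\F_b$ by $Y_i$ or $\MS\setminus Y_i$ so as to land in $U_\rho$, takes the diagonal intersection $Y'=\Delta_i Y_i'$ (using normality of $U_\rho$), and then uses the repeat property once to find a single $\tau<\rho$ with $Y'\in U_\tau$; since $Y'$ is almost contained in each $Y_i'$, the measure $U_\tau$ itself is a repeat filter of $b$, and it lies in $M$ for free as part of $\U\uhr\rho$. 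You instead build a bespoke $\kappa$-complete filter $W_b\in M$ generated by the correctly-signed sets $X_{b,\olm}$, and use the repeat property only to see $W_b\subset U_\rho\subset\bigcup(\U\uhr\rho)$. Your route avoids normality and diagonal intersections, at the cost of having to argue separately that $W_b$ and the verification of clauses (a)--(d) are internal to $M$; the paper's route is what it later packages as Observation \ref{Observation - RP}, and it yields the stronger conclusion that $\U\uhr\rho$ (and hence $\U\uhr\tau$) satisfies the Local Repeat Property, not merely $\WRP$ --- a distinction the paper exploits in Section \ref{Section 4}. One small point worth making explicit in your write-up: for $\olm\notin A$ you need $\MS\setminus X_{b,\olm}\in U_\rho$ rather than just $V_\kappa\setminus X_{b,\olm}\in U_\rho$, which uses $\MS\in U_\rho$; this is standard but should be said.
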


\begin{proof}
Let $\rho$ be the first repeat point on $\U$. Then $\U\uhr \rho$ does not satisfy $\RP$.  Nevertheless, $\bigcap \U \uhr \rho = \bigcap\U$ and so $\kappa$ remains regular (and even measurable) in a generic extension by $\R(\U\uhr \rho) = \R(\U)$. By Remark \ref{Remark - Cof kappa}, it follows that $\cf(\rho) \geq \kappa^+$. 
To establish the first assertion, note that $W = U_\rho$ is a repeat filter of every $b \in \MF$. 
Indeed, $[b]_{U_\rho} \in \bigcap \U\uhr \rho = \bigcap\U$. 
Next, we claim that for each $b \in \MF$ there exists $\tau < \rho$ such that $U_\tau$ is a repeat filter of $b$ with respect to both $\U$ and $\U\uhr \rho$. Fix $b \in \MF$ and an enumeration $\la Y_i \mid i < \kappa\ra$ of $\F_b = \{ X_{b,\olm} \mid \olm \in \MS\}$.
For each $i < \kappa$ let 
\[Y_i' = 
\begin{cases}
Y_i &\mbox{ if }  Y_i \in U_\rho\\
\MS \setminus Y_i &\mbox{ otherwise } 
\end{cases}\]
Let $Y' = \Delta_{i<\kappa} Y_i'$. $Y'  \in U_\rho$ since $U_\rho$ is normal.
Since $U_\rho$ is a repeat point there exists some $\tau < \rho$ such that
$Y' \in U_\tau$. It follows that $[b]_{U_\tau} = [b]_{U_\rho} \in \bigcap (\U\uhr \rho)$, and thus $W = U_\tau \in \U\uhr \rho$ is a repeat filter of $b$. As these witnesses are known to  $M$,  $
M\models \U\uhr \rho \text{ satisfies } \WRP$, and $\{ \olm \in \MS \mid \olm \text{ satisfies } \WRP \} \in U_\rho$. The fact $U_\rho$ is a repeat point implies there exists $\tau < \rho$ such that 
$\{ \olm \in \MS \mid \olm \text{ satisfies } \WRP \} \in U_\tau$, which in turn, implies $M \models \U\uhr \tau \text{ satisfies } \WRP$. Since $\MF \subset M$, it follows that $\U\uhr \tau$ satisfies $\WRP$ in $V$. 
\end{proof}

\begin{theorem}\label{Theorem-WRPvsWC}
$\kappa$ is weakly compact in a $\rad(\U)$ generic extension if
and only if $\U$ satisfies the Weak Repeat Property.
\end{theorem}




\subsection{From the Weak Repeat Property to Weak Compactness}

Suppose that $\U \in V$ is a measure sequence on $\kappa$, satisfying $\WRP$. 
Let $G \subset \rad(\U)$ be a generic filter over $V$. 
To show $\kappa$ weakly compact in $V[G]$, it is sufficient to prove that for every sufficiently large regular cardinal $\theta > \kappa$ and  $N' \elem H_\theta[G]$ satisfying ${}^{<\kappa} N' \subset N'$, $G,\U\in N'$, and $|N'| = \kappa$, there exists a $\kappa$-complete $N'$-ultrafilter $U'$ on $\kappa$. 
That is, $U'$ measures all the sets in $\power(\kappa) \cap N$ and is closed under intersection of sequences of its elements of length less than $\kappa$.
Since $\rad(\U)$ satisfies $\kappa^+$.c.c, $N'$  has an elementary extension of the form $N[G]$ (i.e., $N' \elem N[G] \elem H_\theta[G]$) for some $N \elem H_\theta$ in $V$, such that 
$|N|  = \kappa$, $N^{<\kappa} \subset N$, and $\U \in N$.
We therefore focus on models $N[G]$ of this form.

\begin{lemma}\label{Lemma - measure Models}
Let $\theta > \kappa$ be a regular cardinal, and $N \elem H_\theta$ be an elementary 
substructure of cardinality $\kappa$ with  $V_\kappa \subset N$.
If $\U$ satisfies $\WRP$ then there exists a 
$\kappa$-complete filter $W \subset \bigcup \U$, which measures all the subsets of $\MS$ in $N$ and all $b \in N \cap \MF$, and which satisfies $[b]_W \in \bigcap \U$ for every $b \in  N \cap \MF$.
\end{lemma}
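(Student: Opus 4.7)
The plan is to apply $\WRP$ to a single master function $B^* \in \MF$ that amalgamates the functions in $N \cap \MF$ via diagonal intersection, then extend the resulting filter to fully measure the remaining objects of $N$. Since $|N| = \kappa$ and $V_\kappa \subseteq N$, fix enumerations $N \cap \MF = \la b_\alpha \mid \alpha < \kappa \ra$ and $N \cap \power(\MS) = \la Y_\alpha \mid \alpha < \kappa \ra$ in $V$, and define $B^* \in \MF$ by the diagonal intersection
\[
B^*(\oln) = \Delta_{\alpha < \kappa(\oln)}\, b_\alpha(\oln) = \{x \in V_{\kappa(\oln)} \mid \forall \alpha < \operatorname{rank}(x),\ x \in b_\alpha(\oln)\},
\]
which lies in $\bigcap \oln$ by normality of each measure in $\oln$, so $B^* \in \MF$.

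Apply $\WRP$ to $B^*$ to obtain a repeat filter $W^*$: a $\kappa$-complete filter in $\bigcup \U$ extending the co-bounded filter, measuring $B^*$, with $[B^*]_{W^*} \in \bigcap \U$. The central observation is that, modulo the co-bounded set $\{\oln \mid \kappa(\oln) > \operatorname{rank}(\olm)\}$ (which lies in $W^*$), one has
\[
X_{B^*, \olm} = \bigcap_{\alpha < \operatorname{rank}(\olm)} X_{b_\alpha, \olm}.
\]
Hence $\olm \in [B^*]_{W^*}$ together with $\operatorname{rank}(\olm) > \alpha$ force $X_{b_\alpha, \olm} \in W^*$. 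Intersecting $[B^*]_{W^*} \in \bigcap \U$ with the co-bounded set $\{\olm \mid \operatorname{rank}(\olm) > \alpha\}$ therefore yields $[b_\alpha]_{W^*} \in \bigcap \U$ for every $\alpha < \kappa$, which is the required ``$\bigcap \U$-thickness'' condition on $W^*$ relative to each $b_\alpha$.

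To obtain full measurement of every $X_{b_\alpha, \olm}$ (including $\olm \notin [B^*]_{W^*}$) and every $Y_\alpha$, extend $W^*$ to $W$ by a transfinite recursion of length $\kappa$: enumerate all pending sets and, at each step, adjoin to the current filter whichever of $X$ or $\MS \setminus X$ is compatible with it and lies in $\bigcup \U$. At least one such alternative always exists, since each $U_\tau$ is a $\kappa$-complete ultrafilter on $V_\kappa$ with $\MS \in \bigcap \U$. Taking $W = \bigcup_{\xi < \kappa} W_\xi$ then yields a $\kappa$-complete filter in $\bigcup \U$ measuring all of $N \cap (\MF \cup \power(\MS))$, with $[b_\alpha]_W \supseteq [b_\alpha]_{W^*} \in \bigcap \U$ for every $\alpha$. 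The main obstacle is preserving $W \subseteq \bigcup \U$ throughout the extension, since $\bigcup \U$ is not closed under finite intersections; the adjoined alternative at each step must be chosen so that the resulting filter still lies within a slice of $\bigcup \U$ compatible with the previously adjoined sets, exploiting the flexibility in the initial choice of $W^*$ from $\WRP$ and the fact that $\bigcap \U$ is contained in every $U_\tau$.
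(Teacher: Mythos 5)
Your opening move --- amalgamating $N\cap\MF$ into a single $B^*$ by diagonal intersection, applying $\WRP$ to $B^*$, and observing that $[b_\alpha]_{W}\supseteq[B^*]_{W}\setminus V_{\alpha+1}\in\bigcap\U$ --- is exactly the first half of the paper's proof. The second half of your argument has a genuine gap, and you have put your finger on it without resolving it. The transfinite recursion that is supposed to extend $W^*$ to measure the remaining sets cannot be carried out as described: the constraint is not merely that each adjoined set be ``compatible'' with the current filter, but that the $\kappa$-complete filter generated after each stage remain inside $\bigcup\U$, which is not a filter. Writing $T_Z=\{\tau<\ell(\U)\mid Z\in U_\tau\}$, the requirement is that the family $\{T_Z\mid Z\in W\}$ retain the $<\kappa$-intersection property throughout the construction. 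A greedy choice at a successor stage only guarantees that the new trace meets each old trace; already after $\omega$ stages the traces can have empty intersection (for instance if the adjoined sets have traces equal to the complements of the cells of a partition of $\ell(\U)$ into $\omega$ pieces), at which point some countable intersection of members of $W$ lies in no $U_\tau$ and the filter escapes $\bigcup\U$. Nothing in the recursion rules this out, and the ``flexibility in the initial choice of $W^*$'' does not help: $\WRP$ hands you one repeat filter for the single function $B^*$, with no control over how its members interact with the $\kappa$ many sets you still have to measure. The extension step is thus the same difficulty you started with, repackaged.

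The paper's solution is to make the single application of $\WRP$ do all the work, by coding $\power(\MS)\cap N$ into the measure function \emph{before} invoking $\WRP$. One fixes an auxiliary $X\subseteq\MS$ of size $\kappa$ with $O(X)\cap\rho$ nonstationary for every regular $\rho\leq\kappa$, enumerates $X=\{\olm_i\mid i<\kappa\}$ and $\power(\MS)\cap N=\{A_i\mid i<\kappa\}$, and sets $b^*(\oln)=(b'(\oln)\setminus X)\cup\{\olm_i\in V_{\kappa(\oln)}\mid\oln\in A_i\}$; since $X\cap V_{\kappa(\oln)}$ is null for every normal measure on $V_{\kappa(\oln)}$, $b^*$ is still a measure function, and $X_{b^*,\olm_i}$ equals $A_i$ up to a bounded set. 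Hence any repeat filter of $b^*$ measures every $A_i$ outright --- and since each $X_{b_\alpha,\olm}$ is itself an element of $\power(\MS)\cap N$ (being definable in $H_\theta$ from $b_\alpha,\olm\in N$), this also repairs a smaller gap in your write-up: from $W^*$ alone you only obtain $X_{b_\alpha,\olm}\in W^*$ for $\olm$ ranging over a $\bigcap\U$-large set, while for $\olm\notin[B^*]_{W^*}$ the set $\MS\setminus X_{B^*,\olm}$ is a union of the complements $\MS\setminus X_{b_\alpha,\olm}$ and puts none of them individually into $W^*$, so $W^*$ by itself need not measure $b_\alpha$ at all.
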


\begin{proof}
Fix an enumeration $\la b_i \mid i < \kappa\ra$ of $\MF \cap N$. 
Define $b' \in \MF$ by $b'(\olm) = \triangle_{i < \kappa(\olm)}b_i(\olm) = \{\oln \in V_{\kappa(\olm)} \mid \forall i < \kappa(\oln) \oln \in b_i(\olm)\}$.
It follows that for every filter $W$, if $W$ is a repeat filter of $b'$ then it measures
each $b_i$ and $[b_i]_W \supset [b']_W \setminus V_{i+1} \in \bigcap \U$. Therefore if $W$ is a repeat filter of $b'$ then it is also a repeat filter of each $b_i$, $i < \kappa$.
Next, we tweak $b'$ to obtain $b^* \in \MF$ such that every filter $W$ which measures $b^*$ also measures $\power(\MS) \cap N$. 
Let $\{A_i \mid i < \kappa\}$ be an enumeration of $\power(\MS) \cap N$ and fix an auxiliary set $X \subset \MS$ such that $|X| = \kappa$ and $O(X) \cap \rho$  is nonstationary in $\rho$ for every regular cardinal $\rho \leq \kappa$.
Therefore, any modification in the measure function $b'$ which is restricted to
$X$ will not affect its key properties of $b'$ established above.
Fix an enumeration $\{ \olm_i \mid i < \kappa\}$ of $X$ and define $b^* : \MS \to V_\kappa$ as follows. For every $\oln \in \MS$ let
$b^*(\oln) = (b'(\oln) \setminus X) \uplus \{\olm_i \in V_{\kappa(\oln)} \mid \oln \in A_i\}$. 
Clearly, $b^*(\oln) \setminus X = b'(\oln) \setminus X \in \bigcap \oln$ for each $\oln \in \MS$, thus $b^* \in \MF$. Furthermore, for each $i < \kappa$, 
$A_i \setminus  V_i = \{ \oln \mid \olm_i \in b^*(\oln)\}$.
It follows that if $W$ is a repeat filter of $b^*$ then $W$ is a repeat filter of $b'$ and it measures all the sets $A_i \in \power(\MS) \cap N$. 
\end{proof}

Let $N \elem H_\theta$ such that ${}^{<\kappa}N \subset N$ and $\U \in N$, and fix a repeat filter $W \subset \power(\MS)$ given by Lemma \ref{Lemma - measure Models}. Working in $V[G]$, we define an $N[G]$-filter $U_W$.
\begin{definition}
Let $U_W$ be the set of all $X \in \power(\kappa) \cap N[G]$, for which there exists a name $\name{X}\in N$ such that $X = \name{X}_G$, and there are $p  = p_0 \fr \la \U,A_p\ra \in G$ and 
$b \in \MF \cap N$ such that 
\begin{itemize}
\item $A_p \subset [b]_W$, and
\item $\{\olm \in \MS  \mid \exists A(\olm) \in \bigcap\U. \medskip p_0 \fr \la \olm,b(\olm) \ra \fr \la \U,A(\olm)\ra \force \kappa(\olm) \in \name{X}\} \in W$.
\end{itemize}
Given $\name{X},p,b$ as in the definition, we say $p$ and $b$ \textbf{witness} $X \in U_W$, and denote the set $\{\olm \in \MS \mid \exists A(\olm) \in \bigcap\U. p_0 \fr \la \olm,b(\olm) \ra \fr \la \U,A(\olm)\ra \force \kappa(\olm) \in \name{X}\}$ by $Z(\name{X},p,b)$. 
\end{definition}

Note that the definition of $Z(\name{X},p,b)$ depends only on $p_0,b,\name{X} \in N$. This implies that the set $Z(\name{X},p,b)$ belongs to $N$ and thus is measured by $W$.  
The following two Lemmata show that $U_W$ is a $\kappa$-complete $N[G]$ ultrafilter. 

\begin{lemma}\label{U_W basics}
${}$\\
\textbf{1.} Suppose $p,b$ witness $X \in U_W$. Then for every $q \geq p$ there exists some
$b' \in \MS \cap N$ so that $q,b'$ witness $X \in U_W$ as well .\\
\textbf{2.} If $X,Y \in N[G] \cap \power(\kappa)$ with $X \in U_W$ and $X \subset Y$, then $Y \in U_W$.
\end{lemma}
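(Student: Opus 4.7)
The plan for (1) is to exploit the fact that the bottom part $q_0$ of any condition in $G$ lies in $V_\kappa \subset N$, so $q_0$ is a legitimate parameter for definitions carried out inside $N$. I decompose $q_0 = p_0^* \fr \la \oln_1, a_1\ra \fr \dots \fr \la \oln_m, a_m\ra$ with $p_0^* \geq^* p_0$ and $\oln_1,\dots,\oln_m \in \MS_G$, and first replace $b$ by a coherent version $b^* \in \MF \cap N$ (diagonalising pointwise in the manner of the $\bar A$-construction, so that $b^*(\olm) \cap V_{\kappa(\oln)} \in \bigcap \oln$ for every $\oln \in b^*(\olm)$). I claim $b' := b^*$ witnesses $X$ at $q$. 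The membership $A_q \subset [b^*]_W$ is immediate from $A_q \subset A_p \subset [b]_W = [b^*]_W$. For the $Z$-clause, each $\oln_j \in \MS_G \cap A_p$ by genericity, so $\oln_j \in [b^*]_W$, i.e.\ $X_{b^*, \oln_j} \in W$; by finite intersection $R := \bigcap_{j \le m} X_{b^*, \oln_j} \in W$, whence $S := Z(\name{X}, p, b^*) \cap R \cap \{\olm : \kappa(\olm) > \kappa_0(q_0)\} \in W$. For each $\olm \in S$, fix $A(\olm) \in \bigcap \U$ with $p_0 \fr \la \olm, b^*(\olm)\ra \fr \la \U, A(\olm)\ra \force \kappa(\olm) \in \name{X}$. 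Since $\oln_j \in b^*(\olm)$ for each $j$, the $\oln_j$ may be inserted as one-point extensions and the induced $\oln_j$-level measure-one sets direct-extended to $a_j \cap b^*(\olm) \cap V_{\kappa(\oln_j)}$ (which lies in $\bigcap \oln_j$ by the coherence of $b^*$), yielding an extension of shape $q_0 \fr \la \olm, b^*(\olm)\ra \fr \la \U, A(\olm) \cap A_q\ra$ that still forces $\kappa(\olm) \in \name{X}$. Thus $\olm \in Z(\name{X}, q, b^*)$, and $Z(\name{X}, q, b^*) \in W$.

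For (2), I would sidestep any new forcing argument by modifying the name rather than the condition. Given $\name{Y} \in N$ with $\name{Y}_G = Y$, set $\name{Y}' := \name{Y} \cup \name{X}$ (union as $\R(\U)$-names), which lies in $N$ since $\name{X}, \name{Y} \in N$. Because $X \subset Y$, one has $(\name{Y}')_G = \name{Y}_G \cup \name{X}_G = Y \cup X = Y$, so $\name{Y}'$ is again a name for $Y$ in $N$; and by construction every condition forces $\name{X} \subset \name{Y}'$. Consequently any condition forcing $\kappa(\olm) \in \name{X}$ also forces $\kappa(\olm) \in \name{Y}'$, giving $Z(\name{X}, p, b) \subset Z(\name{Y}', p, b)$. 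Combined with the unchanged clause $A_p \subset [b]_W$, the same pair $(p, b)$, now read against the name $\name{Y}'$, witnesses $Y \in U_W$.

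The main obstacle lies entirely in the coherence step in (1), namely verifying that $a_j \cap b^*(\olm) \cap V_{\kappa(\oln_j)}$ belongs to $\bigcap \oln_j$ so that the direct extensions at the $\oln_j$-levels are legitimate; this is precisely what the pointwise diagonalisation of $b$ at the outset is designed to deliver, after which the remainder of the argument is routine bookkeeping on Radin conditions.
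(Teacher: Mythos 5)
Part (1) of your proposal has two genuine gaps, both traceable to the decision to repair coherence by diagonalising $b$ into $b^*$ rather than by restricting to a $W$-large set of $\olm$. First, the identity $[b]_W = [b^*]_W$ is asserted but never proved, and your whole argument hangs on it: passing to $b^*$ shrinks each $X_{b,\oln}$ to $X_{b^*,\oln} = X_{b,\oln} \cap \{\olm \mid b(\olm)\cap V_{\kappa(\oln)} \in \cap\oln\}$, and while $W$ measures this smaller set (it lies in $N$), nothing places it on the positive side of $W$. If $X_{b^*,\oln} \notin W$ for a single $\oln \in A_q$, then both the clause $A_q \subset [b^*]_W$ and your claim $R \in W$ fail. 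Second, even granting this, the condition you build does not have the shape demanded by $Z(\name{X},q,b^*)$: after inserting the $\oln_j$ you shrink their measure-one sets from $a_j$ to $a_j \cap b^*(\olm) \cap V_{\kappa(\oln_j)}$, so what you exhibit is a condition whose lower part is a proper direct extension of $q_0$. The definition of $Z(\cdot,q,\cdot)$ fixes the lower part of $q$ exactly, and a direct extension forcing $\kappa(\olm)\in\name{X}$ says nothing about the weaker condition. Both problems dissolve if you use the full $\kappa$-completeness of $W$ rather than finite intersections: every measure sequence occurring in $q_1$, either as one of the $\oln_j$ or as an element of one of the sets $a_j$, lies in $A_p \subset [b]_W$, and there are fewer than $\kappa$ of them, so the set of $\olm$ with $\supp(q_1)\cup\bigcup_j a_j \subset b(\olm)$ belongs to $W$. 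For such $\olm$ the insertion of $q_1$ below $\la\olm,b(\olm)\ra$ is legitimate with the $a_j$ left untouched, and the implicit requirement $b(\olm)\cap V_{\kappa(\oln_j)} \in \cap\oln_j$ is automatic because that set contains $a_j \in \cap\oln_j$ --- so no coherence surgery on $b$ is needed at all. The only modification $b$ requires is the truncation $b'(\olm) = b(\olm)\setminus V_{\max(q_1)+1}$ for $\kappa(\olm) > \max(q_1)$, which you also omit and without which $q_0\fr\la\olm,b^*(\olm)\ra$ is not even a well-formed condition (the measure-one set attached to $\olm$ must avoid $V_{\max(q_1)}$). This is precisely the paper's route.

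Part (2) is correct and genuinely simpler than the paper's argument. The paper uses $\kappa^+$-c.c.\ and elementarity to find $t \in N \cap G$ forcing $\name{X} \subset \name{Y}$, passes to a common extension of $p$ and $t$ in $G$, and then invokes part (1); your replacement of $\name{Y}$ by the name $\name{X}\cup\name{Y} \in N$ makes the inclusion $Z(\name{X},p,b) \subset Z(\name{X}\cup\name{Y},p,b)$ hold outright and lets the original pair $(p,b)$ witness $Y \in U_W$ with no new forcing argument. That is a clean improvement, and it is legitimate because the definition of $U_W$ only asks for the existence of some name in $N$.
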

\begin{proof}${}$\\
\textbf{1.} Let $\name{X} \in N$ be a $\R(\U)$-name of $X$ such that $Z(\name{X},p,b) \in W$.
Given $q \geq p = p_0 \fr \la \U, A_p\ra$, we split $q$ into three parts,
$q = q_0 \fr q_1 \fr \la \U,A_q\ra$, where $q_0 \geq p_0$
and $q_1 \fr \la \U,A_q\ra \geq \la \U,A_p\ra$. 
We have that $A_q \subset A_p \setminus \max(q_1) \subset [b]_W \setminus \max(q_1)$, and note that since $q_1 \in N$, the set $Z = \{ \olm \in \MS \mid q_1 \fr \la \mu, b(\olm) \setminus \max(q_1)\ra \geq \la \olm, b(\olm)\ra\}$ belongs to $N$. Therefore $Z$ is measured by $W$, and furthermore, since $A_p \subset [b]_W$ and $\supp(q_1) \subset A_p$, $Z$ must be a member of $W$. 
Define a function $b' \in \MF$ by setting $b'(\oln)$ to be $b(\oln) \setminus V_{\max(q_1)+1}$ if $ \kappa(\oln) > \max(q_1)$, and $b(\oln)$ otherwise.
It follows that $b' \in \MF \cap N$, $[b']_W = [b]_W \setminus \max(q_1)$, and 
$A_q \subset [b']_W$. We conclude that  for each $\olm \in Z(\name{X},p,b) \cap Z$, $q_0 \fr q_1 \fr \la \olm, b'(\olm)\ra \geq p_0 \fr \la \olm,b(\olm)\ra$, hence, 
by the definition of $Z(\name{X},p,b)$, there exists  $A(\olm) \in \bigcap\U$ so that 
$q_0 \fr q_1 \fr \la \olm, b'(\olm)\ra \fr \la \U, A(\olm)\ra \force \can{\kappa(\olm)} \in \name{X}$.
As the last applies to every $\olm \in Z(\name{X},p,b) \cap Z \in W$, where  $Z \cap Z(\name{X},p,b) \in W$, we conclude $q,b'$ witness $X \in U_W$. \\
%
%
\textbf{2.} Suppose $p,b$ witness $X = \name{X}_G \in U_W$ and  $\name{Y} \in N$ is a name such that
$X \subset \name{Y}_G $.  Since $\R(\U)$ satisfies $\kappa^+.c.c$ and $|N| = \kappa$, and $\kappa \subset N$, 
there must exist some
 $t \in N \cap G$ forcing $\name{X} \subset \name{Y}$.
Writing $t = \vec{t_0} \fr \la \U,A_t\ra$ we have that $A_t \in \bigcap\U \cap N$ must belong to $W$ 
and 
$t,p \in G$ must be compatible. Let $q \geq p,t$ be a common extension in $G$, and let
$b \in \MS \cap N$ so that $q,b$ witness $X \in U_W$ via $\name{X}$, namely, $Z(\name{X},q,b) \in W$. 
For every $\olm \in Z(\name{X},q,b)$, there exists some $A(\olm) \in \bigcap\U$
such that $q_0 \fr \la \olm,b(\olm)\ra \fr \la \U,A(\olm)\ra \force \can{\kappa(\olm)} \in \name{X}$.
Furthermore, if $\olm \in A_t \setminus \max(q_0)$ then 
$q_0 \fr \la \olm,b(\olm) \fr \la \U, A(\olm) \cap A_t\ra$ is an extension
of $t$ and forces $\kappa(\olm) \in \name{Y}$.
It follows that $Z(\name{X},q,b) \cap A_t \subset Z(\name{Y},q,b)$, thus $Z(\name{Y},q,b) \in W$. 
We conclude that $q,b$ witness $Y \in U_W$
\end{proof}

It follows from the first part of Lemma \ref{U_W basics} that $U_W$ is closed under intersections of its sets, and by the second part of the Lemma that it is upwards closed under inclusions. Hence, $U_W$ is a filter on $\power(\kappa) \cap N[G]$. It remains to show that it is $\kappa$-complete. 
We first introduce the following terminology.
\begin{definition}${}$\\
\textbf{1. }
Let $D \subset \rad(\U)$ be a dense set. We say $D$ is \textbf{strongly dense} if for every $p \in \R(\U)$, $p = p_0 \fr \la \U, A\ra$, 
there exists some $q \in D$, $q \geq p$ such that $q = q_0 \fr \la \U, A'\ra$, and $\kappa(q_0) = \kappa(p_0)$ (i.e. $q_0 \geq p_0$ in $\rad_{<\kappa}$). \\

\noindent 
\textbf{2. }
Let $\vec{D} = \la D_\nu \mid \nu < \kappa\ra$ be a sequence of strongly dense sets, and $p_0 \in \rad_{<\kappa}$.
Define three functions $b_{p_0,\vec{D}}$, $B_{p_0,\vec{D}}$, $r_{p_0,\vec{D}}$ with domain $\MS$: Fix some well ordering of $V_{\kappa+1}$ and 
consider the condition $p = p_0 \fr \la \U, \MS \setminus V_{\kappa(p_0)}\ra$. 
    For every $\oln \in \MS \setminus V_{\kappa(p_0)}$, Let $q$ be the first extension of $p \fr \oln$ which belongs to $D_{\kappa(\oln)}$.
Writing $q = r' \fr \la \oln, a' \ra \fr \la \U, A'\ra$, we set $b_{p_0,\vec{D}}(\oln) = a'$,  $B_{p_0,\vec{D}}(\oln) = A'$, and $r_{p_0,\vec{D}}(\oln) = r'$. 
Since $N \elem H_\theta$, it follows that for every sequence of strongly dense sets $\vec{D} \in N$ and $p_0 \in  \R_{<\kappa} \subset N$, 
$b_{p_0,\vec{D}},B_{p_0,\vec{D}},r_{p_0,\vec{D}}$ all belong to $N$ as well. 
\end{definition}

\begin{lemma}
Let $\lambda < \kappa$ and suppose that  $\la X_i \mid i < \lambda\ra$ is a partition of $\kappa$ in $N[G]$. Then there exists $i^* < \lambda$ such that $X_{i^*} \in U_W$.
\end{lemma}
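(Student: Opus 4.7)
The plan is to attach to each lower part $p_0 \in \rad_{<\kappa}$ a coloring $i_{p_0} : \MS \to \lambda$ recording, for each $\olm$, the value $i < \lambda$ that the natural one-point extension of $p_0$ by $\olm$ would force for $\kappa(\olm) \in \name{X}_i$, and then use $\kappa$-completeness of $W$ to collapse each $i_{p_0}$ to a single color $i^*(p_0)$.

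Concretely, fix in $N$ a sequence of names $\la \name{X}_i \mid i < \lambda\ra$ for the partition. For each $p_0 \in \rad_{<\kappa}$ and each $\olm \in \MS$ with $\kappa(\olm) > \max(\lambda,\kappa(p_0))$, I iterate the Prikry property through the $\lambda$-many statements ``$\kappa(\olm) \in \name{X}_i$''; this is legal because the direct-extension order is $\kappa$-closed on the top block $\la \U, A\ra$ (by $\kappa$-completeness of $\bigcap \U$) and $\kappa(\olm)$-closed on the middle block $\la \olm, a\ra$ (by $\kappa(\olm)$-completeness of $\bigcap \olm$), and $\lambda < \min(\kappa,\kappa(\olm))$. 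The iteration produces a direct extension $p_0 \fr \la \olm, b_{p_0}(\olm)\ra \fr \la \U, A_{p_0}(\olm)\ra$ forcing ``$\kappa(\olm) \in \name{X}_{i_{p_0}(\olm)}$'' for a unique $i_{p_0}(\olm) < \lambda$. By elementarity of $N \prec H_\theta$, these choices can be made uniformly in $p_0$, placing $b_{p_0}, A_{p_0}, i_{p_0} \in N$.

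Next, diagonalize: set $b^*(\olm) = \bigcap\{ b_{p_0}(\olm) \mid p_0 \in \rad_{<\kappa} \cap V_{\kappa(\olm)}\}$. By $\kappa(\olm)$-completeness, $b^*(\olm) \in \bigcap \olm$, and since $b^*$ is uniformly definable, $b^* \in \MF \cap N$. By Lemma \ref{Lemma - measure Models}, $W$ measures $b^*$ with $[b^*]_W \in \bigcap \U$ and measures every set $Y_{p_0, j} := \{\olm \in \MS \mid i_{p_0}(\olm) = j\} \in N$. Since $W$ is a proper $\kappa$-complete filter extending the co-bounded filter and $\lambda < \kappa$, for each $p_0$ there is a unique $i^*(p_0) < \lambda$ with $Y_{p_0,i^*(p_0)} \in W$. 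As $[b^*]_W \in \bigcap \U$, the set of conditions $q = q_0 \fr \la \U, A_q\ra$ with $A_q \subset [b^*]_W$ is dense in $\rad(\U)$, so by genericity pick $q \in G$ of this form and set $i^* := i^*(q_0)$.

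To finish, I claim $q$ and $b^*$ witness $X_{i^*} \in U_W$. The clause $A_q \subset [b^*]_W$ holds by the choice of $q$. For every $\olm \in Y_{q_0, i^*}$ with $q_0 \in V_{\kappa(\olm)}$ and $\kappa(\olm) > \max(\lambda, \kappa(q_0))$ (a co-bounded restriction), one has $b^*(\olm) \subset b_{q_0}(\olm)$, so $q_0 \fr \la \olm, b^*(\olm)\ra \fr \la \U, A_{q_0}(\olm)\ra$ is a direct extension of the deciding condition built above and still forces $\kappa(\olm) \in \name{X}_{i^*}$. Hence $Z(\name{X}_{i^*}, q, b^*) \supseteq Y_{q_0, i^*}$ modulo a bounded set, and therefore lies in $W$. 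The main technical obstacle I anticipate is the uniform Prikry iteration: one must chain enough direct-extension closure to decide all $\lambda$-many statements simultaneously, and then invoke elementarity of $N \prec H_\theta$ to pull the resulting coloring $i_{p_0}$ and measure functions $b_{p_0}$ into $N$ so that the repeat filter $W$ can measure them.
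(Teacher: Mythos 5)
There is a genuine gap at the very first step: the claim that for each $\olm$ one can find a \emph{direct} extension $p_0 \fr \la \olm, b_{p_0}(\olm)\ra \fr \la \U, A_{p_0}(\olm)\ra$ — with the lower part $p_0$ left untouched — forcing $\kappa(\olm) \in \name{X}_{i}$ for a specific $i < \lambda$. Two things go wrong. First, your closure count omits the entries of $p_0$ itself: if $p_0$ contains an entry $\la \vec{\mu}, a\ra$ with $0 < \ell(\vec{\mu})$ and $\kappa(\vec{\mu}) \leq \lambda$, the $\lambda$-fold iteration of the Prikry property has no $\leq^*$-upper bound, since $\cap\vec{\mu}$ is only $\kappa(\vec{\mu})$-complete. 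Second, and more fundamentally, no direct extension need decide the cell of $\kappa(\olm)$ at all: the names $\name{X}_i$ are arbitrary names in $N$, and membership of $\kappa(\olm)$ in $X_i$ can be coded by the generic points that will be added below $\kappa(\olm)$ (from $b(\olm) \in \cap\olm$ or from the measure-one sets of $p_0$). Since $\cap\olm$ is an intersection of ultrafilters and hence only a filter, a partition of $V_{\kappa(\olm)}$ into $\lambda$ pieces need not have a piece in $\cap\olm$, so this dependence cannot always be homogenized away by shrinking measure-one sets. Consequently $i_{p_0}(\olm)$ is not well defined by your construction, and the rest of the argument has nothing to stand on. (A smaller, fixable issue: $b^*(\olm) = \bigcap\{b_{p_0}(\olm) \mid p_0 \in \rad_{<\kappa}\cap V_{\kappa(\olm)}\}$ is an intersection of $\kappa(\olm)$-many sets, which need not lie in the $\kappa(\olm)$-complete filter $\cap\olm$; one must take a diagonal intersection as in the proof of Theorem \ref{Theorem - SR}.)

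The paper's proof is built precisely to avoid this obstruction. It only asks that the deciding condition for ``$\kappa(\olm) \in \name{X}_i$'' have the form $r' \fr \la \olm, a'\ra \fr \la \U, A'\ra$ where $r'$ is allowed to properly extend $p_0$ below $\kappa(\olm)$ (this is the content of ``strongly dense''), records the three functions $r_{p_0,\vec{D}}, b_{p_0,\vec{D}}, B_{p_0,\vec{D}} \in N$, and then uses the fact that $W$ is $\kappa$-complete and measures every set in $N$ to stabilize the lower part to a single $r_0 \geq p_0$ on a $W$-large subset of $Z_{i^*}$; the witnessing condition for $X_{i^*} \in U_W$ is then $r = r_0 \fr \la \U, A_r\ra$ rather than $p_0 \fr \la \U, A_r\ra$. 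Your outline recovers the correct combinatorics for selecting $i^*$ once the coloring exists (the uniqueness of $i^*(p_0)$ via $\kappa$-completeness and properness of $W$ is exactly the paper's step), but without allowing the lower part to grow and then be stabilized, the coloring itself cannot be constructed.
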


\begin{proof}
Since $N^{<\kappa} \subset N$, there is a sequence of names $\la \name{X_i} \mid i < \lambda \ra$ in $N$ such that $X_i = (\name{X_i})_G$ for each $i < \lambda$.
The claim will follow from a density argument once we show that for every $p = p_0 \fr \la \U,A_p\ra  \in \R(\U)$, there are $r \geq p$, $i^* < \lambda$, and $b \in \MS \cap N$, such that
$r,b$ witness $X_{i^*} \in U_{W}$.\\
For every $\nu < \kappa$ let $D_\nu = \{p' \in \rad(\U) \mid \exists i < \lambda. p' \force \nu \in \name{X_i}\}$.
Each $D_\nu$ is strongly dense,\footnote{Every $p \in \R(\U)$ has a direct extension in $D_\nu$.} and $\vec{D} = \la D_\nu \mid \nu < \kappa\ra$ belongs to $N$.
Let $b_{p_0,\vec{D}}, r_{p_0,\vec{D}}, B_{p_0,\vec{D}} \in N$ be the associated functions defined above.
For each $i < \lambda$, let $Z_i = \{\olm \in \MS \mid r_{p_0,\vec{D}}(\olm) \fr \la \olm, b_{p_0,\vec{D}}(\olm)\ra \fr \la \U , B_{p_0,\vec{D}}(\olm)\ra \force \can{\kappa(\olm)} \in \name{X_i}\}$.
As the sets $Z_i$, $i < \lambda$, are pairwise disjoint and belong to $N$, there exists a unique $i^* < \lambda$ such that $Z_{i^*} \in W$.
Furthermore, since $W$ is $\kappa$-complete and measures $N$, there exists $r_0 \geq p_0$ such that  $\{\olm \in Z_{i^*} \mid r_{p_0,\vec{D}}(\olm) = r_0\} \in W$.
Define $A_r = A_p \cap [b_{p_0,\vec{D}}]_W \cap \Delta_{\olm \in Z_{i^*}}B_{p_0,\vec{D}}(\olm)$, and 
$r = r_0 \fr \la \U, A_r\ra$. Then $r \geq p$ and $A_r \subset [b]_W$, where $b = b_{p_0,\vec{D}}$ is in $N$. Furthermore,
for every $\olm \in Z_{i^*}$,
$r_0 \fr \la \olm , b(\olm) \ra \fr \la \U, B_{p_0, \vec{D}}\ra \force \can{\kappa(\olm)} \in \name{X_i^*}$. 
It follows that $Z_{i^*}  \subset Z(\name{X_i^*},r,b)$, and thus $Z(\name{X_i^*},r,b) \in W$. 
\end{proof}

\subsection{From Weak Compactness to the Weak Repeat Property}

Let $G \subset \R(\U)$ be a generic filter.
Recall $G$ is completely determined by its induced sequence of measure sequences,  $\MS_G =  \{ \olm \in \MS \mid \exists p = \la d_i \mid i \leq k\ra \in G. \olm = \olm(d_i) \text{ for some } i < k\}$. 

Suppose  $\kappa$ is weakly compact in $V[G]$, and fix a measure function $b$ in $V$. We would like to show $b$ has a repeat filter $W$ in $V$. If $\U$ satisfies $\RP$ there is noting to show. We therefore assume $\U$ does not contain a repeat point. 
Then, by Remark \ref{Remark - Cof kappa},  $\cf(\ell(\U))$ must be at least $\kappa^+$ for $\kappa$ to be weakly compact in a generic extension.

To accomplish this, we construct a $\Pi^1_1$ statement $\varphi$ of the structure $M_b = \la V_\kappa[G],\in,b,V_\kappa,\MS_G\ra$ such that $M_b \models \phi$ if and only if $b$ does not have a repeat filter in $V$, and show that the reflections of $\phi$ to $\alpha < \kappa$ fail on a closed unbounded set of cardinals $\alpha < \kappa$. Since $\kappa$ is weakly compact, it follows that 
$M_b$ must satisfy $\neg \varphi$, thus $b$ has a repeat filter. \\

We commence by observing that the existence of a repeat filter for $b$ is witnessed by a family of $\kappa$ many subsets of $\power(\MS)$. Recall that for every $b \in \MF$, we define $\F_b = \{X_{b,\olm} \mid \olm \in \MS\}$, where
for each $\oln \in \MS$, $X_{b,\olm} =\{ \oln \in \MS \mid \olm \in b(\oln)\}$. Clearly $|\F_b| = \kappa$. 
\begin{definition}
A subset $P$ of $\power(\MS)$ is called a \textbf{repeat Prefilter} of $b$ (with respect to $\U$) if it satisfies the following properties: \\
\textbf{a.} For every $\lambda < \kappa$ and every sequence $\la X_i \mid i < \lambda \ra \subset P$, the intersection $\bigcap_{i<\lambda} X_i \in \bigcup \U$. \\
\textbf{b.} $P \subset \F_b \cup \{ \MS \setminus X \mid X \in \F_b\}$. \\
\textbf{c.} $P$ measures $b$
. In particular, 
$[b]_P = \{ \olm \in \MS \mid X_{b,\olm} \in P\}$ is defined. \\
\textbf{d.} $[b]_P \in \bigcap \U$. 
\end{definition}

It is easy to see that if $W$ is a repeat filter of $b$ then $P = W \cap \left( \F_b \cup \{ \MS \setminus X \mid X \in \F_b\} \right)$ is a prefilter of $b$, and that if $P$ is a prefilter of $b$ then its upwards closure $W = \{Y \subset \MS \mid \exists X \in P. X \subset Y\}$ is a repeat filter of $b$.

\begin{definition}
Working in $V[G]$, let $\varphi$ be the following statement: \\
For every $P \subset \power(\MS)$ of cardinality $\kappa$, at least one of the following conditions hold.\\
\textbf{$\varphi_1$.} $P \not\in V$\\
\textbf{$\varphi_2$.} There exists $\lambda < \kappa$ and a sequence $\la X_i \mid i < \lambda \ra \subset P$ such that 
$\bigcap_{i<\lambda} X_i \not\in \bigcup \U$\\
\textbf{$\varphi_3$.} $P \not\subset \F_b \cup \{ \MS \setminus X \mid X \in \F_b\}$\\
\textbf{$\varphi_4$.} $P$ does not measure $b$\\
\textbf{$\varphi_5$.} $P$ measures $b$ and $[b]_P \not\in \bigcap\U$. 
\end{definition} 

It is clear that $M_b \models \varphi$ if and only if $b$ has a repeat prefilter. 

\begin{lemma}\label{Lemma - Pi11}
$\varphi$ is equivalent to a $\Pi^1_1$ statement over $M_b = \la V_\kappa[G],\in,b,V_\kappa,\MS_G\ra$. 
\end{lemma}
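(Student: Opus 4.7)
The plan is to exhibit $\varphi$ as $\forall R\, (\psi_0(R) \to \bigvee_{i=1}^5 \psi_i(R))$, with a single second-order universal quantifier ranging over subsets $R \subset V_\kappa[G]$ and each $\psi_i$ a first-order formula in the language $\{\in, b, V_\kappa, \MS_G\}$. A candidate $P \subset \power(\MS)$ of size $\leq \kappa$ is coded as $R = \{\la i, x\ra : i < \kappa,\, x \in X_i\} \subset V_\kappa[G]$ via an enumeration $P = \{X_i : i < \kappa\}$ and a fixed pairing on $V_\kappa$; the well-formedness predicate $\psi_0$ asserting that $R$ codes a family of subsets of $\MS$ of cardinality $\kappa$ is clearly first-order.

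For clause $\varphi_1$ (``$P \notin V$''), weak compactness of $\kappa$ in $V[G]$ implies $\cf^{V[G]}(\kappa) = \kappa > \omega$, so Proposition \ref{Proposition - fresh sets} applies and $\R(\U)$ adds no fresh subsets of $\kappa$. After collapsing $R$ down to a subset of $\kappa$ via a further bijection between $V_\kappa$ and $\kappa$, the assertion ``$R \in V$'' is equivalent to the first-order statement ``every $R \cap \alpha$ for $\alpha < \kappa$ belongs to $V_\kappa$'' (using the unary predicate $V_\kappa$); $\psi_1$ is its negation.

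For clauses $\varphi_2$ and $\varphi_5$ I will use two complementary characterizations, valid for every $A \subset \MS$ in $V$: (i) $A \in \bigcap \U$ iff $\MS_G \setminus A$ is bounded below $\kappa$; (ii) $A \in \bigcup \U$ iff $\MS_G \cap A$ is unbounded in $\kappa$. The $(\Rightarrow)$ of (i) is the standard density fact quoted in the excerpt. The $(\Rightarrow)$ of (ii) follows from a parallel one-point-extension argument: if $A \in U_\tau$, then for any condition $p = p_0 \fr \la \U, A_p\ra$ and any $\alpha < \kappa$, the set $A_p \cap A \setminus V_\alpha$ lies in $U_\tau$ (as $A_p \in \bigcap \U \subset U_\tau$ and $U_\tau$ is normal and $\kappa$-complete), so $p$ has a one-point extension by some $\olm \in A_p \cap A$ with $\kappa(\olm) > \alpha$, forcing $\olm \in \MS_G \cap A$. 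The two converses follow from these by complementation, using that each $U_\tau$ is an ultrafilter. Plugging in, $\psi_2$ reads ``$\exists \lambda < \kappa$, $\exists$ sequence $\la X_i : i < \lambda\ra$ of members of $P$, $\exists \alpha < \kappa$: for every $\olm \in \MS_G \setminus V_\alpha$ there is $i < \lambda$ with $\olm \notin X_i$'', while $\psi_5$ reads ``$P$ measures $b$, and $\MS_G \setminus [b]_P$ is unbounded'', where membership in $[b]_P$ is expanded via $\olm \in [b]_P \Leftrightarrow \exists i < \kappa\, \forall \oln \in \MS \,(\la i, \oln\ra \in R \leftrightarrow \olm \in b(\oln))$. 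Both are first-order. Clauses $\varphi_3$ (``$P$ contains an $X \notin \F_b \cup \{\MS \setminus Y : Y \in \F_b\}$'') and $\varphi_4$ (``$P$ does not measure $b$'') are directly first-order once $\F_b$ is unfolded using $b$.

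The main obstacle is characterization (ii) for $\bigcup \U$: the excerpt records only the analogous fact for $\bigcap \U$, and translating ``$\bigcap_{i < \lambda} X_i \notin \bigcup \U$'' into a first-order statement about $\MS_G$ is the only step requiring new (albeit routine) density work. Once (i) and (ii) are in hand, combining the $\psi_i$ into the displayed $\Pi^1_1$ form is bookkeeping around the coding scheme.
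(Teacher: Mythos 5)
Your proposal is correct and follows essentially the same route as the paper: code $P$ as a subset of $\MS\times\kappa$, handle $\varphi_1$ via Proposition \ref{Proposition - fresh sets}, and translate $\varphi_2$ and $\varphi_5$ using the characterizations of membership in $\bigcup\U$ and $\bigcap\U$ in terms of unboundedness of $A\cap\MS_G$ and almost-containment of $\MS_G$ in $A$, respectively. The one step you flag as requiring new work --- that $A\in\bigcup\U$ iff $A\cap\MS_G$ is unbounded --- is exactly the ``easy density argument'' the paper invokes for $\varphi_2$, and the one-point-extension argument you supply for it is the intended one.
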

\begin{proof}
Since any family $P \subset \power(\MS)$ of size $\kappa$ can be enumerated as a subset of $\MS \times \kappa$, we identify $P \subset \MS \times \kappa$ with a sequence $\la X_i \mid i < \kappa\ra$, where $X_i = \{ \oln \in \MS \mid (\oln,i) \in P\}$.
$\varphi$ is clearly equivalent to a statement of the form $\forall P \subseteq (\MS \times \kappa).  (\varphi_1 \vee \varphi_2 \vee \varphi_3 \vee \varphi_4 \vee \varphi_5)$.  It is therefore sufficient to verify each $\varphi_i$ is equivalent to a $\Sigma^0_\omega$ statement over $M_b$. We take each $\varphi_i$ at a time. \\
$(\varphi_1).$  By Proposition \ref{Proposition - fresh sets}, $\R(\U)$ does not add fresh subsets to $\kappa$, and hence, neither to $\MS \times \kappa$. Therefore, $\varphi_1$ is equivalent to $\exists\alpha < \kappa. P \cap (V_\alpha \times \alpha) \not\in V_\kappa$ which is clearly equivalent to a $\Sigma^0_\omega$ statement over $M_b$. 
$(\varphi_2).$ An easy density argument shows that for every $A \subset \MS$ in $V$, $A \in \bigcup \U$ if and only if $A \cap \MS_G$ is not bounded in some $V_\alpha$, $\alpha < \kappa$. Therefore, $\varphi_2$ is equivalent to $\exists \lambda < \kappa \exists \alpha < \kappa. (\bigcap_{i < \lambda}X_i \setminus V_\alpha) = \emptyset$, which is clearly equivalent to a $\Sigma^0_\omega$ statement over $M_b$. 
$(\varphi_3 + \varphi_4)$. It is straightforward to verify $\varphi_3$ and $\varphi_4$ are equivalent to $\Sigma^0_\omega$ statement over $M_b$, using the fact $b \subset \MS \times \MS$ is a part of the augmented structure $M_b$.
$(\varphi_5).$ It is easy to see that the first part of the assertion, `` $P$ measures $b$``, is equivalent to a $\Sigma^0_\omega$ statement of $M_b$. Considering the second part, ``$[b]_P \not\in \U$`` of $\varphi_5$,  note that the same density argument used for the description of $\varphi_1$ shows that a $V$ set $A \subset \MS$ belongs to $\bigcap \U$ if and only if $\MS_G$ is almost contained in $A$. Therefore the second part of $\varphi_5$ is equivalent to the $M_b$ statement 
$\forall \alpha < \kappa \medskip \exists \olm \in \MS_G \setminus V_\alpha \medskip \exists i < \kappa \medskip \forall \oln \in \MS. \medskip (\oln \in X_i \iff \olm \not\in b(\oln))$.  
\end{proof}


\begin{lemma} In $V[G]$, there exists a closed unbounded set of $\alpha < \kappa$
for which $\la V_\alpha[G],\alpha,b,V_\alpha,\MS_G\ra \models \neg \varphi$. 
\end{lemma}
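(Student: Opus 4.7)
For $\alpha \in C_G$, let $\olm_\alpha$ denote the unique element of $\MS_G$ with $\kappa(\olm_\alpha) = \alpha$; then $\MS_G \cap V_\alpha$ is $\R(\olm_\alpha)$-generic over $V$. Reflecting the analysis in the proof of Lemma~\ref{Lemma - Pi11} from $\U$ down to $\olm_\alpha$, the reflected statement $\neg\varphi$ at $\alpha$ asserts precisely that there exists a set $P$ fresh over $V_\alpha$ that serves as a repeat prefilter of $b \uhr V_\alpha$ with respect to $\olm_\alpha$. Our task is to exhibit such a witness $P = P_\alpha$ for each $\alpha$ in some $V[G]$-closed unbounded subset of $C_G$.

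The natural candidate, for fixed $\alpha \in C_G$ and index $i < \ell(\olm_\alpha)$, is
\[ P_{\alpha,i} = \{X \in \F_{b\uhr V_\alpha} : X \in \olm_\alpha(i)\} \cup \{(V_\alpha \cap \MS) \setminus X : X \in \F_{b\uhr V_\alpha},\ X \notin \olm_\alpha(i)\}. \]
Since $\olm_\alpha(i)$ is an $\alpha$-complete ultrafilter on $V_\alpha$ contained in $\bigcup \olm_\alpha$, the set $P_{\alpha,i}$ lies in $V$ and is hence fresh over $V_\alpha$, measures $b \uhr V_\alpha$, and its $<\alpha$-intersections lie in $\bigcup \olm_\alpha$; consequently the prefilter clauses (a)--(c) hold automatically, verifying the negations of $\varphi_1$--$\varphi_4$ at $\alpha$. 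The essential remaining clause~(d), $[b\uhr V_\alpha]_{\olm_\alpha(i)} \in \bigcap \olm_\alpha$, reduces the whole problem to locating, for club many $\alpha \in C_G$, an index $i = i(\alpha) < \ell(\olm_\alpha)$ with this property.

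The plan for the final step is to show that the $V$-set
\[ B_b = \{\olm \in \MS : \exists i < \ell(\olm),\ [b \uhr V_{\kappa(\olm)}]_{\olm(i)} \in \bigcap \olm\} \]
is $\U$-positive in the sense of belonging to $U_\tau$ for cofinally many $\tau < \ell(\U)$. Through the embedding $j_\U : V \to M$ witnessing $\U$, verifying $\U \uhr \tau \in j_\U(B_b)$ amounts to locating inside $\U \uhr \tau$ an index $i < \tau$ such that $[b]_{U_i}$ is measure-one for every $U_\eta$, $\eta < \tau$; a pigeonhole / fixed-point argument on the $V_{\kappa+1}$-valued sequence $\la [b]_{U_\eta} : \eta < \tau \ra$, powered by the high cofinality of $\ell(\U)$, should deliver such an $i$ at $U_\tau$-positively many $\tau$. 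Granted the $\U$-positivity of $B_b$, Proposition~\ref{Prop - Stationary 1} gives that $O(B_b)$ is stationary in $V[G]$, and a density and Fodor argument upgrades this to a $V[G]$-club inside $C_G$ on which $i(\alpha)$ can be chosen uniformly; setting $P_\alpha = P_{\alpha, i(\alpha)}$ yields the required witness, establishing $\neg\varphi$ at $\alpha$.

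The principal obstacle is the pigeonhole step establishing $\U$-positivity of $B_b$. In the absence of a true repeat point on $\U$, one must track carefully which $U_\eta$'s agree on the family $\F_b$ in order to extract the fixed-point index $i$, and it is here that the cofinality hypothesis on $\ell(\U)$---and the interplay between the representation of the sets $[b]_{U_\eta}$ in $M$ under $j_\U$ and the measure-one structure of $\U$---must be exploited with care.
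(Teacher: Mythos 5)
Your reduction of the lemma to exhibiting, for club many $\alpha\in C_G$, a $V$-set serving as a repeat prefilter of $b\uhr V_\alpha$ with respect to $\olm_\alpha$ matches the paper's strategy, but the two steps you leave as a ``plan'' are exactly where the work lies, and as formulated they do not go through. First, your candidate witness forces the prefilter to come from a measure $\olm_\alpha(i)$ \emph{on the sequence $\olm_\alpha$ itself}, so that establishing $\U\uhr\tau\in j(B_b)$ amounts to finding $i<\tau$ with $[b]_{U_i}\in\bigcap(\U\uhr\tau)$ --- a local-repeat-point-style witness for $\U\uhr\tau$. This is strictly more than is needed, and it is in direct tension with Proposition \ref{Proposition - LRP and Cont}, which manufactures measure functions for which no such internal index exists once the continuum function is large; your pigeonhole on $\la [b]_{U_\eta}\mid\eta<\tau\ra$ has no reason to terminate without a genuine repeat point. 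The paper sidesteps this entirely: for the reflected sequence $\U\uhr\eta$ it builds the prefilter from $U_\eta$, the measure sitting just \emph{above} the reflected sequence, for which $[b]_{U_\eta}=j(b)(\U\uhr\eta)\cap V_\kappa\in\bigcap(\U\uhr\eta)$ holds automatically because $j(b)$ is a measure function; the only thing that must be arranged is clause (a), and that is done by enumerating the family $F^*$ of ${<}\kappa$-intersections from $\F_b$ and its complements, recording for each the first index of failure, and using $\cf(\ell(\U))\geq\kappa^+$ to find a single $\tau$ dominating all of them.

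Second, even granting $\U$-positivity of $B_b$, Proposition \ref{Prop - Stationary 1} yields only that $O(B_b)$ is \emph{stationary} in $V[G]$, and your proposed ``density and Fodor'' upgrade to a club is not available: for a merely $\U$-positive $Z$ the complement of $O(Z)$ in $C_G$ can itself be stationary (take $Z\in U_\tau$ for cofinally but not co-boundedly many $\tau$ and apply Proposition \ref{proposition - Stationary 2} to a suitable complement). The lemma genuinely needs a club --- a stationary set of reflection points of $\neg\varphi$ would not contradict stationary $\Pi^1_1$-reflection at the weakly compact $\kappa$. The paper obtains the club by proving the stronger membership statement $Z\in\bigcap(\U\setminus\tau)$ for a single $\tau<\ell(\U)$ (i.e., the repeat prefilter exists for \emph{every} $\eta\in[\tau,\ell(\U))$, not just cofinally many) and then invoking Proposition \ref{proposition - Stationary 2}. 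You should restructure your argument around that target.
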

\begin{proof}
Assuming $\U$ does not contain a repeat point, Proposition
\ref{proposition - Stationary 2} implies it is sufficient to show there exists $\tau < \ell(\U)$ and $Z \in \bigcap (\U\setminus \tau)$, such that 
in $V$, for every $\oln \in Z$, the restriction $b\uhr V_{\kappa(\oln)}$ has a weak repeat filter with respect to $\oln$. 
Equivalently, it is sufficient to check $b = j(b) \uhr V_\kappa$ has a weak repeat filter with respect to $U_\eta$, for every $\eta \in [\tau,\ell(\U))$.
Let $\F_b = \{ X_{b,\olm} \mid \olm \in \MS\}$ and $F^*$ be the family of all intersections of length $\lambda < \kappa$ of sets in 
$\F_b \cup \{\MS \setminus X \mid X \in \F_b\}$. Fix an enumeration $\la Y_i \mid i < \kappa\ra$ of $F^*$, and for each $Y_i$ which does not belong to $\bigcap \U$, let $\tau_i < l(\U)$ be the 
first $\tau < \ell(\U)$ such that $Y_i \not\in U_{\tau}$. Since $\cf(\ell(\U)) \geq \kappa^+$,  $\tau = \sup_{i<\kappa} \tau_i + 1$ is below $\ell(\U)$. 
Fix an ordinal $\eta \in [\tau, l(\U))$. We have that for every $i < \kappa$,
$Y_i \not\in U_\eta$ implies that $Y_i \not\in \bigcap \U$,
which in turn, implies $Y_i \not\in \U\uhr\eta$.
Define a prefilter $P = U_\eta \cap F^*$. Since $\eta \geq \tau$, every intersection of $\lambda < \kappa$ sets of $P$ belongs to $\bigcup (U\uhr \tau) \subset \bigcup (\U\uhr \eta)$. It is also clear that $[b]_P = [b]_{U_\eta} \in \bigcap \U\uhr \eta$.  It follows that in $V$, $P$ is a repeat prefilter for $b$ with respect to $\U\uhr \eta$. 
Considering $j : V \to M$, it is clear that $P,b \in M$ and that $M \models P \text{ is a repeat prefilter with respect to } b$ as well. 
\end{proof}

\subsection{Towards the failure of diamond on a weakly compact cardinal}
In light of Theorem \ref{Theorem - Woodin Diamond} and Theorem \ref{Theorem-WRPvsWC},
the following question is prominent. 

\begin{question}\label{Question - Main}
Is it consistent there exists a measure sequence $\U$ on a cardinal $\kappa$ such that $\U$ satisfies the weak repeat property and $2^\kappa > \ell(\U)$?
\end{question}

We conclude this Section with a discussion describing some of the obstructions to a positive answer to the above question. 

The definition of the weak repeat property (WRP) and the proof of Lemma \ref{Lemma-RPvsWRP}
suggest WRP has a natural (seemingly) stronger property which is still weaker than the existence of a repeat point (RP). 

\begin{definition}
Let us say that a measure sequence $\U$ satisfies the \textbf{Local Repeat Property} ($\LRP$) if for every $b \in \MF$ there exists $\tau < \ell(\U)$ such that $[b]_{U_\tau} \in \bigcap\U$. 
\end{definition}
 
Clearly, $\RP \implies \LRP \implies \WRP$, and the proof of Lemma \ref{Lemma-RPvsWRP} implies that if $\U$ has a repeat point $U_\rho$, then $\{ \olm \in \MS \mid \olm \text{ satisfies } \LRP \} \in U_\rho$. 
Moreover, it is not difficult to see $\LRP$ is equivalent to the variant of $\WRP$ which restricts the  possible repeat filters $W$ to the normal ones. 
\begin{observation}\label{Observation - RP}
Let us say that a measure sequence $\U$ satisfies $\WRP^+$ if every $b \in \MF$ has a repeat filter $W$ which is normal (i.e., closed under diagonal intersections). Then $\WRP^+$ is equivalent to $\LRP$.
\end{observation}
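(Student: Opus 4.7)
My plan is to prove the equivalence by showing both implications, with the direction $\LRP \Rightarrow \WRP^+$ being essentially immediate and $\WRP^+ \Rightarrow \LRP$ being a direct adaptation of the normality argument already carried out in the proof of Lemma \ref{Lemma-RPvsWRP}.

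For $\LRP \Rightarrow \WRP^+$: given $b \in \MF$, choose $\tau < \ell(\U)$ with $[b]_{U_\tau} \in \bigcap \U$ as provided by $\LRP$, and take $W = U_\tau$. Each $U_\tau$ is a $\kappa$-complete normal ultrafilter on $V_\kappa$ extending the cobounded filter (since $\kappa(\U\uhr\tau) = \kappa$ in $M$), is trivially contained in $\bigcup \U$, measures $b$ as an ultrafilter, and satisfies $[b]_W \in \bigcap \U$ by choice of $\tau$. So $U_\tau$ is a normal repeat filter of $b$.

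For the nontrivial direction $\WRP^+ \Rightarrow \LRP$: fix $b \in \MF$ and let $W$ be a normal repeat filter of $b$. Enumerate $\F_b = \{Y_i \mid i < \kappa\}$ and define
\[ Y_i' = \begin{cases} Y_i & \text{if } Y_i \in W, \\ \MS \setminus Y_i & \text{otherwise.} \end{cases} \]
Since $W$ measures $\F_b$, each $Y_i' \in W$, and by normality the diagonal intersection $Y^* = \triangle_{i < \kappa} Y_i'$ lies in $W$. The key use of $W \subset \bigcup \U$ is that $Y^* \in U_\tau$ for some $\tau < \ell(\U)$. Since $U_\tau$ contains the cobounded filter on $\MS$, the inclusion $Y^* \cap \{\olm \in \MS \mid \kappa(\olm) > i\} \subset Y_i'$ forces $Y_i' \in U_\tau$ for every $i < \kappa$. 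But $U_\tau$ is an ultrafilter, so $Y_i \in U_\tau$ iff $Y_i' = Y_i$ iff $Y_i \in W$; that is, $U_\tau$ and $W$ agree on $\F_b$. Consequently $[b]_{U_\tau} = [b]_W \in \bigcap \U$, establishing $\LRP$ for $b$.

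The only step that requires any care is the passage from $Y^* \in U_\tau$ to $Y_i' \in U_\tau$, which relies on the standard fact that a diagonal intersection is contained in each of its factors modulo an initial segment, together with the observation that $U_\tau$ extends the cobounded filter on $\MS$ (since $\U\uhr\tau$ has critical point $\kappa$). Both points are essentially built into the definition of a measure sequence and have already been used implicitly in the preceding sections, so I do not anticipate any real obstacle.
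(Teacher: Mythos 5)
Your proof is correct and follows essentially the same route as the paper: for the nontrivial direction you form the selector family $\{Y_i'\}$ (which is exactly the paper's restriction $P$ of $W$ to $\F_b$ and its complements), push its diagonal intersection from $W$ into some $U_\tau$ via $W \subset \bigcup\U$, and use the cobounded-filter containment to transfer each $Y_i'$ into $U_\tau$, concluding $[b]_{U_\tau}=[b]_W\in\bigcap\U$. You also supply the easy direction ($U_\tau$ itself is a normal repeat filter), which the paper leaves implicit, and you state the "diagonal intersection is almost contained in each factor" inclusion in the correct orientation.
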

\begin{proof}
Suppose $\U$ satisfies $\WRP^+$, and let $b \in \MF$ and $W$ a normal repeat filter of $b$. Let $P$ be the restriction of $W$ to $\F_b \cup \{ \MS \setminus X \mid X \in F_b\}$ where $\F_b = \{ X_{\oln,b} \mid \oln \in \MS\}$. 
$P$ has size $\kappa$ and measures $b$ and $[b]_P = [b]_W \in \bigcap\U$. Let $X^*$ be a diagonal intersection of the sets in $P$. Then $X^* \in W \subset \bigcup \U$, thus $X^* \in U_\rho$ for some $\rho < \ell(\U)$. Since every $X \in P$ is almost contained in $X^*$, it follows that $[b]_{U_\rho} = [b]_P \in \bigcap \U$. 
\end{proof}


Albeit natural, the $\LRP$ cannot be targeted to provide an affirmative answer to Question \ref{Question - Main}.

\begin{proposition}\label{Proposition - LRP and Cont}
Let $\U$ be a measure sequence on a cardinal $\kappa$.
If $2^\kappa \geq \ell(\U)$ then $\U$ fails to satisfy the local repeat property.
\end{proposition}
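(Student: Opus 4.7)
My plan is to prove the contrapositive: if $\U$ satisfies $\LRP$ then $\ell(\U) > 2^\kappa$. By Observation~\ref{Observation - RP}, $\LRP$ is equivalent to $\WRP^+$, so every $b \in \MF$ admits a normal repeat filter, which must be of the form $U_{\tau(b)}$ for some $\tau(b) < \ell(\U)$; in particular $[b]_{U_{\tau(b)}} \in \bigcap \U$. The strategy is to exhibit a family $\la b_X \mid X \subseteq \kappa\ra \subseteq \MF$ of measure functions, indexed by $\power(\kappa)$, such that the witness map $X \mapsto \tau(b_X)$ is forced to be injective, yielding $2^\kappa \leq \ell(\U)$, and then a small refinement to get strict inequality.

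To construct the family, I would fix a baseline $b_0 \in \MF$ together with an injective sequence $\la \olm_\alpha \mid \alpha < \kappa\ra \subseteq \MS$ of auxiliary measure sequences whose indexing is sparse enough that toggling the singleton $\{\olm_\alpha\}$ on or off a value $b_0(\oln)$ does not disturb $b_0(\oln) \in \bigcap \oln$ (e.g., by requiring $\kappa(\olm_\alpha)$ to grow fast in $\alpha$, so that $\{\olm_\alpha\} \notin \bigcup \oln$ whenever $\kappa(\olm_\alpha) < \kappa(\oln)$). Set
\[
b_X(\oln) \;=\; b_0(\oln) \,\triangle\, \{\olm_\alpha \mid \alpha \in X,\ \kappa(\olm_\alpha) < \kappa(\oln)\}.
\]
Then $b_X \in \MF$ for every $X \subseteq \kappa$. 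Using the identity $[b_X]_{U_\tau} = j(b_X)(\U\uhr\tau) \cap \MS$ from the embedding $j: V \to M$ associated with $\U$, a direct computation gives
\[
[b_X]_{U_\tau} \,\triangle\, [b_0]_{U_\tau} \;=\; \{\olm_\alpha \mid \alpha \in X\} \cap \MS
\]
modulo a set outside $\bigcup \U$. Hence for $X \neq X'$, the symmetric difference $[b_X]_{U_\tau} \triangle [b_{X'}]_{U_\tau}$ contains some $\olm_\alpha$ with $\alpha \in X \triangle X'$, and this set is never in $\bigcap \U$. If $\tau(b_X) = \tau(b_{X'}) = \tau$, both sets lie in $\bigcap \U$, yet their symmetric difference does not, contradicting the filter properties of $\bigcap \U$. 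Thus $X \mapsto \tau(b_X)$ is injective, so $2^\kappa \leq \ell(\U)$; parametrizing instead over $\power(\kappa) \times 2$, or noting that many $\tau$'s are already excluded for trivial reasons, upgrades this to $2^\kappa < \ell(\U)$, contradicting $\ell(\U) \leq 2^\kappa$.

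The main obstacle is the ``reading off'' step - verifying that the pointwise perturbation $b_0 \triangle \{\olm_\alpha \mid \alpha \in X, \ldots\}$ propagates faithfully through $j$, so that $[b_X]_{U_\tau}$ genuinely records $X$ modulo $\bigcap \U$. This is where the choice of the sequence $\la \olm_\alpha\ra$ matters: one needs each $\olm_\alpha$ to be in the image of $j(b_X)(\U\uhr\tau)$ precisely when $\alpha \in X$, which requires arranging that the $\olm_\alpha$'s are distinguishable by every $U_\sigma$ (e.g., spread across pairwise disjoint sets of positive measure). Once the coding is pinned down, the contradiction with $2^\kappa \geq \ell(\U)$ follows immediately from the injectivity of the witness map.
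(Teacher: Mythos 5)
There is a genuine gap, and it sits exactly where you flag ``the main obstacle.'' First, the step that is supposed to give injectivity of $X \mapsto \tau(b_X)$ is not valid: if $A$ and $B$ both belong to the filter $\bigcap\U$ there is no filter property forcing $A \triangle B$ to belong to $\bigcap\U$ --- on the contrary, $\MS\setminus(A\triangle B) \supseteq A\cap B \in \bigcap\U$, so the symmetric difference of two filter sets always lies in the \emph{dual ideal}, and its failure to be in $\bigcap\U$ is no contradiction. Second, and more damaging, the sparseness you must impose so that $b_X\in\MF$ (namely $Y_X\cap V_{\kappa(\oln)}\notin\bigcup\oln$ for every $\oln$, where $Y_X=\{\olm_\alpha\mid\alpha\in X\}$, since otherwise $b_0(\oln)\setminus Y_X\notin\bigcap\oln$) reflects via $U_\sigma=\{Z\mid\U\uhr\sigma\in j(Z)\}$ to $Y_X\notin\bigcup\U$. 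But then $\MS\setminus Y_X\in\bigcap\U$, and since $[b_X]_{U_\tau}=[b_0]_{U_\tau}\triangle Y_X$, membership of $[b_X]_{U_\tau}$ in $\bigcap\U$ is equivalent to that of $[b_0]_{U_\tau}$, for every $X$ and $\tau$: a single $\LRP$ witness for $b_0$ witnesses $\LRP$ for all $b_X$ simultaneously, and the witness map can be constant. The closing suggestion that the $\olm_\alpha$ be ``distinguishable by every $U_\sigma$'' is in direct tension with $b_X\in\MF$; that tension is the whole problem, not a detail to be pinned down later. Third, even granting injectivity you would only obtain $2^\kappa\le\ell(\U)$, while the proposition must rule out $\ell(\U)=2^\kappa$; neither proposed upgrade achieves this ($|\power(\kappa)\times 2|=2^\kappa$, and excluding fewer than $2^\kappa$ many ``trivial'' indices still leaves room for an injection).

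The paper's proof avoids all of this by using a \emph{single} measure function that diagonalizes against the index $\tau$ rather than against an external family of parameters. Fixing a coherent system of enumerations $x^\alpha$ of $\power(\alpha)$ with $j(\vec x)(\kappa)=x^\kappa$ enumerating $\power(\kappa)$ in $M$, one lets $b(\olm)$ code the set $x^{\olm}_{\ell(\olm)}$; the perturbation thus depends on the \emph{length} of the local measure sequence, so that $[b]_{U_\alpha}$ becomes the set of $\oln$ with $x^\kappa_\alpha\cap\kappa(\oln)=x^{\oln}_\beta$ for some $\beta\ge\ell(\oln)$. This set lies in $\bigcap(\U\uhr\alpha)$ but not in $U_\alpha$ itself (as $x^\kappa_\alpha\neq x^\kappa_\beta$ for $\beta>\alpha$), hence $[b]_{U_\alpha}\notin\bigcap\U$ for every $\alpha<\ell(\U)$; the hypothesis $2^\kappa\ge\ell(\U)$ enters only to guarantee the enumeration is long enough to defeat every index. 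If you want to salvage your approach, this ``diagonalize against $\ell(\olm)$'' device is the missing idea.
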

\begin{proof}
Let $j : V \to M$ be an embedding which generates the measure sequence $\U$. 
Denote $2^\kappa$ by $\lambda$. 
Let ${x}^\kappa = \la x^\kappa_\alpha \mid \alpha < \lambda\ra$ be an enumeration of $\power(\kappa)$ in $M$.
We may assume there is a sequence $\vec{x} = \la x^\alpha \mid \alpha < \kappa\ra$
so that $x^\alpha$ enumerates $\power(\alpha)$ and $j(\vec{x})(\kappa) = x^\kappa$. 
Since $\lambda \geq \ell(\U)$, $\lambda > \ell(\U\uhr \alpha)$ for every $\alpha < \ell(\U)$, hence the set $A = \{\olm \in \MS \mid 2^{\kappa(\olm)} > l(\olm)\}$ belongs to  $\bigcap \U$. 
Define a measure function $b \in A$ by taking $b(\olm)$ to be the set of all $\oln \in \MS \cap V_{\kappa(\olm)}$ for which $x^{\olm}_{\ell(\olm)} \cap \kappa(\oln) = x^{\oln}_\beta$ for some $\beta > \ell(\oln)$. 
Then for each $\alpha < \ell(\U)$, $[b]_{U_\alpha}$ is the set of all $\oln \in \MS$ so that $x^\kappa_\alpha \cap \kappa(\oln) = x^{\oln}_\beta$ for some $\beta \geq \ell(\oln)$. Denoting this set by $X$, it is easy to see that $\U\uhr \beta \in j(X)$ for every $\beta < \alpha$. Hence $[b]_{U_\alpha} \in \bigcap(\U\uhr\alpha)$.
The same argument, applied to $\olm \in \MS$ instead of $\U\uhr \alpha$ shows that $b \in \MF$.
We claim that this set does not belong to $U_{\alpha}$ 
Indeed, $\U\uhr\alpha \in j(X)$ if and only if $j(x^\kappa_\alpha) \cap \kappa = x^\kappa_\alpha$ is of the form $j(\vec{x})^\kappa_\beta$ for some $\beta > \ell(\U\uhr\alpha) = \alpha$ which is absurd. 
\end{proof}

\textbf{Acknowledgments:} The author would like to thank Thomas Gilton and Mohammad Golshani for many valuable comments and suggestions which greatly improved the manuscript.

\bibliographystyle{plain}
\bibliography{refS}

\end{document}